\documentclass[12pt]{elsarticle}
\usepackage{lineno,hyperref} 
\modulolinenumbers[5]
\usepackage{amsmath, amssymb, amsthm}
\usepackage{tikz}
\usepackage{graphicx,subfigure}
\usepackage{nameref}
\usetikzlibrary{arrows,shapes,chains}
\usepackage[a4paper, left=3cm, right=3cm, top=2.5cm, bottom=2.5cm]{geometry}

\newtheorem{thm}{Theorem}[section]

\newtheorem{lem}[thm]{Lemma}
\newtheorem{pro}[thm]{Proposition}
\newtheorem{rem}{Remark}

\newtheorem*{thmA}{Theorem A}
\newtheorem*{thmB}{Theorem B}
\newtheorem*{thmC}{Theorem C}
\newtheorem*{thmD}{Theorem D}
\newtheorem*{thmE}{Theorem E}
\newtheorem*{thmF}{Theorem F}
\newtheorem*{thmG}{Theorem G}
\newtheorem*{que}{Question}
\newtheorem*{conj}{Conjecture}

\numberwithin{equation}{section}
\journal{Journal of \LaTeX\ Templates}








\bibliographystyle{elsarticle-num}

\begin{document}

\begin{frontmatter}

\title{Restricted sumsets in $\mathbb{Z}$\tnoteref{mytitlenote}}
\tnotetext[mytitlenote]{This work was supported by the National Natural Science Foundation of China(Grant
Nos. 12101007 and 12371003) and the Natural Science Foundation of Anhui Province (Grant No. 2008085QA06).}

\author[mymainaddress]{Yujie Wang\corref{mycorrespondingauthor}}
\ead{wangyujie9291@126.com}

\author[mymainaddress]{Min Tang}
\ead{tmzzz2000@163.com}
\cortext[mycorrespondingauthor]{Corresponding author}

\address[mymainaddress]{School of Mathematics and Statistics, Anhui Normal University, Wuhu 241002, P. R. China}

\begin{abstract}
  Let $k\geqslant 3$ and let $A=\{0=a_{0}<a_{1}<\cdots<a_{k-1}\}$ with $\gcd(A)=1$.
   Freiman-Lev conjecture
  [V.F. Lev, Restricted set addition in groups, I. The classical setting, J. London Math. Soc. 62(2000), 27-40] is a well-known conjecture which related to restricted sumsets.
  Up to now, Freiman-Lev conjecture is open for all $a_{k-1}\geqslant 2k-2$.
  In this paper, we prove the Freiman-Lev conjecture is true for $a_{k-1}\geqslant 2k-2$ and $a_{k-2}<2k-4$.
  That is, Freiman-Lev conjecture is still open for the case $a_{k-1}\geqslant 2k-2$ and $a_{k-2}\geq 2k-4$.
\end{abstract}

\begin{keyword} restricted sumsets; Freiman-Lev conjecture; inverse problem

\MSC[2020] 11B13
\end{keyword}

\end{frontmatter}


\section{Introduction}
If $A$ and $B$ are two subsets of some arbitrary group $G$, the sumset and restricted sumset of $A$, $B$ are defined as
$$A+B=\{a+b: a\in A, b\in B\}, \ \ A\widehat{+}B=\{a+b: a\in A, b\in B, a\neq b\}. $$
In other words, the set $A\widehat{+}B$ is obtained from $A+B$ by excluding those sums with $b=a$.
There are many famous results regarding sumsets, such as the Cauchy-Davenport theorem and Vosper's theorem.

When $G=\mathbb{Z}$, let
$2A$ and $2^{\wedge}A$ denote the set of all sums of two elements of $A$ and the set of all sums of two distinct elements of $A$, respectively.
Define the interval of integers $[a, b]=\{x\in \mathbb{Z}\ |\ a\leqslant x\leqslant b\}$ and $\gcd(A)$ the greatest common divisor of all nonzero elements of $A$.

The following fundamental and important result of sumset $2A$ is immediately available.

\begin{thmA}
Let $A$ be a set of $k$ integers. Then
$$|2A|\geqslant 2k-1. $$
Moreover, $|2A|=2k-1$ if and only if $A$ is a $k$-term arithmetic progression.
\end{thmA}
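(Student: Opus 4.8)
The plan is to prove the inequality and the equality case separately, working throughout with the elements written in increasing order as $A=\{a_0<a_1<\cdots<a_{k-1}\}$.

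First I would prove $|2A|\geqslant 2k-1$ by exhibiting an explicit strictly increasing chain of $2k-1$ sums inside $2A$. Increasing one summand at a time starting from the smallest possible sum, the $2k-1$ numbers
$$2a_0<a_0+a_1<\cdots<a_0+a_{k-1}<a_1+a_{k-1}<\cdots<a_{k-1}+a_{k-1}$$
are pairwise distinct (each inequality follows from $a_i<a_{i+1}$) and all lie in $2A$, which immediately gives the lower bound. For the easy direction of the characterization, if $A$ is a $k$-term arithmetic progression with common difference $d$, say $a_i=a_0+id$, then $a_i+a_j=2a_0+(i+j)d$ with $i+j$ ranging over $\{0,1,\ldots,2k-2\}$, so $|2A|=2k-1$.

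The substantive part is the converse. Assuming $|2A|=2k-1$, the chain above already lists $2k-1$ distinct elements, so it must be all of $2A$; writing $2A=\{s_0<s_1<\cdots<s_{2k-2}\}$ I can therefore read off $s_i=a_0+a_i$ for $0\leqslant i\leqslant k-1$ and $s_{k-1+j}=a_j+a_{k-1}$ for $0\leqslant j\leqslant k-1$. The key idea is then a squeezing argument applied to the ``next row'' of sums $a_1+a_0<a_1+a_1<\cdots<a_1+a_{k-1}$. These are $k$ distinct elements of $2A$, with smallest term $a_1+a_0=s_1$ and largest term $a_1+a_{k-1}=s_k$; since $s_0<\cdots<s_{2k-2}$ exhausts $2A$, the only elements of $2A$ lying in $[s_1,s_k]$ are $s_1,\ldots,s_k$, which are exactly $k$ in number. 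Hence the $k$ increasing sums must coincide with $s_1,\ldots,s_k$ term by term, giving $a_1+a_i=s_{1+i}$ for $0\leqslant i\leqslant k-1$.

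Finally I would combine the two labelings: for $0\leqslant i\leqslant k-2$ one has $s_{1+i}=a_0+a_{1+i}$, so $a_1+a_i=a_0+a_{i+1}$, i.e.\ $a_{i+1}-a_i=a_1-a_0$ for every such $i$, which is exactly the statement that $A$ is an arithmetic progression. I expect the main obstacle to be the counting step in the squeezing argument---verifying that the row $a_1+A$ is forced to fill the interval $[s_1,s_k]$ with no room to spare---since this is where the hypothesis $|2A|=2k-1$ is genuinely used; the remainder is bookkeeping with the monotone chain.
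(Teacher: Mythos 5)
Your proof is correct. The paper states Theorem A without proof, treating it as a classical fact; your argument --- the monotone chain of $2k-1$ sums $2a_0<a_0+a_1<\cdots<a_0+a_{k-1}<a_1+a_{k-1}<\cdots<2a_{k-1}$ for the lower bound, and the squeezing of the row $a_1+A$ into the $k$ available slots $s_1,\ldots,s_k$ for the equality case --- is the standard one and is complete (modulo the trivial remark that the converse argument needs $k\geqslant 2$ for $a_1$ to exist).
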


G.A. Freiman \cite{Freiman1, Freiman} obtained the famous Freiman's $2A$ theorem.
\begin{thmB}
Let $A$ be a set of $k\geqslant 3$ integers such that $A\subseteq [0, l]$, $0, l\in A$ and $\gcd(A)=1$. Then
$$|2A|\geqslant \left\{\begin{array}{ll}
l+k &\text{ if } l\leqslant 2k-3, \\
3k-3 &\text{ if } l\geqslant 2k-2. \end{array}\right.$$
\end{thmB}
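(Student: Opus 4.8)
The plan is to prove the unified bound $|2A|\geqslant\min\{3k-3,\,k+l\}$, which is equivalent to the two displayed inequalities because $k+l\leqslant 3k-3$ precisely when $l\leqslant 2k-3$. Throughout I would keep the normalisation $0=a_0<a_1<\cdots<a_{k-1}=l$ with $\gcd(a_1,\ldots,a_{k-1})=1$, and use as the engine the two ``combs'' $a_0+A=A\subseteq 2A$ and $a_{k-1}+A=l+A\subseteq 2A$: since the first lies in $[0,l]$ and the second in $[l,2l]$, they overlap only in $\{l\}$ and already give the baseline $|2A|\geqslant 2k-1$ (Theorem A). The whole game is to manufacture the extra $\approx k-2$ elements, and I would run the argument by induction on $k$ after splitting on the arithmetic of a truncated set.

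Concretely, I would delete the largest point, writing $A'=A\setminus\{l\}$, $l'=a_{k-2}$, and $d=\gcd(a_1,\ldots,a_{k-2})$, and treat two cases. When $d\geqslant 2$ the coprimality of $A$ forces $d\nmid l$, so modulo $d$ the three pieces $2A'\equiv 0$, $A'+l\equiv l$, and $2l$ sit in distinct residue classes apart from easily controlled coincidences; rescaling $A'$ by $d$ and applying Theorem A gives $|2A'|\geqslant 2k-3$, the translate $A'+l$ contributes a disjoint $k-1$ elements, and $2l>l+l'$ adds one more, for a total of at least $3k-3$. This already beats $\min\{3k-3,\,k+l\}$, so the case $d\geqslant 2$ needs no induction.

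The substantive case is $d=1$, where the induction hypothesis yields $|2A'|\geqslant\min\{3(k-1)-3,\ (k-1)+l'\}$. Here the top comb $A+l$ supplies the two elements $a_{k-2}+l$ and $2l$, both exceeding $2l'$ and hence outside $2A'\subseteq[0,2l']$, giving $|2A|\geqslant\min\{3k-4,\ k+1+l'\}$. The difficulty is that this falls short of the target by exactly one: to close the gap I must produce a third element of $2A$ outside $2A'\cup\{a_{k-2}+l,\,2l\}$. The natural source is the top comb itself, and I would try to show that at most $k-3$ of the $k$ sums $a_i+l$ are absorbed into $2A'$; an absorption $a_i+l=a_p+a_q$ with $p,q\leqslant k-2$ is a rigid additive coincidence, and heavy absorption would force $A$ to be (nearly) an arithmetic progression, a configuration one can analyse directly. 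When the top has no room, I would instead reflect via $A\mapsto l-A$ and delete from the end carrying the larger terminal gap, so that the needed element surfaces at one of the two ends.

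The step I expect to be the genuine obstacle is exactly this ``missing element'' in the coprime case: unlike when $d\geqslant 2$, coprimality no longer separates the contributions by residue, and one must use it more subtly to rule out the degenerate, highly-structured sets for which all the combs nearly collapse onto the staircase. This is also where the boundary $l=2k-3\leftrightarrow l=2k-2$ between the two regimes is tight, since both extremal families — long arithmetic progressions for the bound $k+l$, and sets like $\{0,1,\ldots,k-2,l\}$ for the bound $3k-3$ — live right at this transition, so the bookkeeping that distinguishes them is the crux of the argument.
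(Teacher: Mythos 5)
First, a point of reference: the paper does not prove Theorem B at all — it is quoted as a classical result of Freiman (the standard proof, recalled in the paper's introduction, runs through the trichotomy ``$a_{i}<2i$ for all $i\leqslant k-2$'' / ``$a_{k-2}<2k-4$ but some $a_{i}\geqslant 2i$'' / ``$a_{k-2}\geqslant 2k-4$'', using in the first case the pigeonhole fact $[0,2k-4]\subseteq 2A'$). So your induction on $k$ is necessarily a different route, and the question is only whether it closes. Your case $d=\gcd(a_{1},\ldots,a_{k-2})\geqslant 2$ does close: $\gcd(d,l)=1$ separates $2A'$ from $l+A'$ modulo $d$, and $2l$ exceeds both $2a_{k-2}$ and $a_{k-2}+l$, giving $|2A|\geqslant (2k-3)+(k-1)+1=3k-3$.

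The genuine gap is in the case $d=1$, and it is much larger than you claim. From $|2A|\geqslant \min\{3k-6,\;k-1+l'\}+2$ the shortfall against $\min\{3k-3,\;k+l\}$ is not ``exactly one'': when the first minimum is attained at $k-1+l'$ and the target at $k+l$, the deficit is $l-l'-1$, i.e.\ the top gap minus one, which can be as large as $k-2$. Concretely, for $A=\{0,1,\ldots,k-2,2k-3\}$ you have $|2A'|=2k-3$ (sharp, since $A'$ is a progression), so your two guaranteed top elements only bring you to $2k-1$, while the target is $3k-3$; reflecting and deleting from the other end gives $3k-4$, still short. So the fix cannot be ``one more element'': you must count how many of the $k$ sums $a_{i}+l$ escape $2A'$, and your hoped-for bound that at most $k-3$ of them are absorbed is false as stated (for $A=\{0,1,\ldots,k-1\}$ exactly $k-2$ are absorbed — harmless there, but it shows the claim needs a structural hypothesis). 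Quantifying absorption $a_{i}+l=a_{p}+a_{q}$ in general is precisely the inverse-theoretic content of the theorem, and it is left entirely to a hand-wave (``heavy absorption would force $A$ to be nearly an arithmetic progression''). As written, the coprime case — which is the whole substance of Theorem B — is not proved.
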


G.A. Freiman \cite{Freiman} described the structure of $A$ if $|2A|$ is not much greater than the minimal value, which is well-known as Freiman's $3k-4$ theorem.

\begin{thmC}
Let $A$ be a set of $k\geqslant 3$ integers. If
$$|2A|=2k-1+b\leqslant 3k-4, $$
then $A$ is a subset of an arithmetic progression of length at most $k+b\leqslant 2k-3$.
\end{thmC}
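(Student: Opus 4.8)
The plan is to derive Theorem C directly from Theorem B by a normalization argument followed by a short case analysis on the diameter $l=\max A-\min A$. First I would reduce to the normalized situation required by Theorem B. Given an arbitrary $k$-set $A$ with least element $a_0$, set $d=\gcd\{a-a_0:a\in A\}$ and apply the affine map $\phi(x)=(x-a_0)/d$. This map is a bijection onto its image, so it preserves $|2A|$ (the map $s\mapsto(s-2a_0)/d$ carries $2A$ bijectively onto $2\phi(A)$), it sends arithmetic progressions to arithmetic progressions of the same length, and it produces a set $\phi(A)$ with $0=\min\phi(A)$ and $\gcd(\phi(A))=1$. Hence it suffices to prove the conclusion for a normalized set $A\subseteq[0,l]$ with $0,l\in A$, $\gcd(A)=1$, $|A|=k$, and $|2A|=2k-1+b\leqslant 3k-4$, which forces $b\leqslant k-3$.

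Next I would split on the size of $l$ and invoke Theorem B. If $l\geqslant 2k-2$, Theorem B gives $|2A|\geqslant 3k-3$, contradicting $|2A|\leqslant 3k-4$; so this range is impossible and we must have $l\leqslant 2k-3$. In that regime the first branch of Theorem B applies and yields $|2A|\geqslant l+k$. Combining this with $|2A|=2k-1+b$ gives $l+k\leqslant 2k-1+b$, i.e. $l\leqslant k+b-1$.

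Finally I would translate the bound back. The containment $A\subseteq[0,l]$ exhibits $A$ inside the arithmetic progression $\{0,1,\dots,l\}$ of common difference $1$ and length $l+1\leqslant k+b$; since $b\leqslant k-3$ we also obtain $k+b\leqslant 2k-3$, matching the stated bound. Undoing $\phi$ sends this progression to an arithmetic progression of the same length containing the original $A$, which completes the argument.

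As for where the real work lies: once Theorem B is available the deduction is essentially its contrapositive, so the only genuinely delicate points are (i) verifying that the normalization by $d=\gcd\{a-a_0:a\in A\}$ legitimately preserves both $|2A|$ and the notion of ``length of the containing progression,'' and (ii) checking that the hypothesis $|2A|\leqslant 3k-4$ is precisely what excludes the large-diameter branch $l\geqslant 2k-2$ of Theorem B. I do not expect a substantial obstacle beyond this bookkeeping; in this approach the structural content of Theorem C is already packaged into the sharp lower bound of Theorem B.
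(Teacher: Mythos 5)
Your derivation is correct, but note that the paper does not prove Theorem C at all: it is quoted as a classical result of Freiman (the ``$3k-4$ theorem''), alongside Theorem B, with a citation to Freiman's original work, so there is no in-paper argument to compare against. That said, your reduction is exactly the standard way the two statements are related. The normalization by $\phi(x)=(x-a_0)/d$ with $d=\gcd\{a-a_0:a\in A\}$ does preserve $|2A|$ and carries the containing progression $[0,l]$ back to a $d$-spaced progression of the same length $l+1$ around the original $A$; the hypothesis $|2A|\leqslant 3k-4$ rules out the branch $l\geqslant 2k-2$ of Theorem B, and the branch $l\leqslant 2k-3$ gives $l+k\leqslant |2A|=2k-1+b$, hence $l+1\leqslant k+b$, with $k+b\leqslant 2k-3$ equivalent to the hypothesis $b\leqslant k-3$. (The implicit fact $b\geqslant 0$ is Theorem A.) So the structural content of Theorem C is indeed fully packaged in the lower bound of Theorem B, and your two ``delicate points'' are the only places where care is needed; both check out.
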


G.A. Freiman \cite{Freiman1} also described the structure of $A$ for $|2A|=3k-3$.
\begin{thmD}
Let $A$ be a set of $k>6$ integers such that $A\subseteq [0, l]$, $0, l\in A$ and $\gcd(A)=1$.
If $|2A|=3k-3$, then $A$ is a subset of an arithmetic progression of length at most $2k-1$ or a union of two arithmetic progressions with the same common difference.
\end{thmD}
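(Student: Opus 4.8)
The plan is to prove Theorem~D by \emph{peeling} an extreme element of $A$ and applying Freiman's $3k-4$ theorem (Theorem~C) to the remaining $(k-1)$-element set, and then to analyse how the peeled element sits relative to the arithmetic-progression structure of the core.

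I would first dispose of the short-diameter regime. If $l\leqslant 2k-3$, then $A\subseteq\{0,1,\dots,l\}$, an arithmetic progression of length $l+1\leqslant 2k-2\leqslant 2k-1$, so the conclusion holds with common difference $1$; Theorem~B moreover shows that $|2A|=3k-3$ is exactly the minimal sumset size once $l\geqslant 2k-2$, which is why this is the critical case. So assume $l\geqslant 2k-2$ and write $A=\{0=a_0<a_1<\cdots<a_{k-1}=l\}$.

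Set $A'=A\setminus\{a_{k-1}\}$ and decompose $2A=2A'\cup(a_{k-1}+A)$. Since $\max(2A')=2a_{k-2}$ while $a_{k-1}+a_i>2a_{k-2}$ whenever $a_i>2a_{k-2}-a_{k-1}$, each such sum is new, as is $2a_{k-1}$; writing $t=\#\{i\leqslant k-2:\ a_i>2a_{k-2}-a_{k-1}\}$ this gives $|2A|\geqslant|2A'|+t+1$, hence $|2A'|\leqslant 3k-4-t$. When $t\geqslant 3$, that is, when $a_{k-1}$ lies well above the top of $A'$, we get $|2A'|\leqslant 3(k-1)-4$, so Theorem~C confines $A'$ to an arithmetic progression of length at most $2k-5$; one then only has to check that reinserting $a_{k-1}$ produces either a single progression of length at most $2k-1$ or a union of two progressions of equal difference. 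Replacing $A$ by $l-A$ runs the same peeling from the bottom and settles the case of a large bottom gap.

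The hard part is the complementary regime, where both extreme gaps $a_{k-1}-a_{k-2}$ and $a_1-a_0$ are small, forcing $t\leqslant 2$ and $|2A'|$ to remain near $3(k-1)-3$; here Theorem~C does not apply to $A'$ and a single peeling makes no progress. I would handle this by induction on $k$: the small-gap hypothesis says that $A$ is already arithmetic-progression-like near both ends, and I would verify that deleting an extreme element preserves both this extremal profile and (after rescaling) the coprimality needed to invoke the induction hypothesis, which then supplies a progression for the core. The two delicate points are (i) controlling $\gcd$ and the base of the induction under repeated peeling, and (ii) the gluing step: knowing the core lies in a progression of difference $d$, one must exclude the reinserted extreme elements either lengthening the progression past $2k-1$ or spawning a third progression, and instead show that they extend the given progression or populate a single parallel progression of the same difference $d$. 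This case analysis, organised by the sizes of the extreme gaps against $d$, is where essentially all the effort concentrates; the counting inequalities above serve only to restrict the whole argument to this boundary situation.
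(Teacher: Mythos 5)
First, a point of reference: the paper does not prove Theorem D. It is quoted as Freiman's structure theorem for $|2A|=3k-3$ from \cite{Freiman1} (the paper even quotes Freiman's remark that its proof ``took one month''), so there is no in-paper argument to compare yours against; your proposal has to stand on its own.

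Your skeleton --- dispose of $l\leqslant 2k-3$, then peel $a_{k-1}$, count the new sums $a_{k-1}+a_i>2a_{k-2}$ together with $2a_{k-1}$ to get $|2A'|\leqslant 3k-4-t$, and invoke Theorem C on $A'$ when $t\geqslant 3$ --- is a sensible opening, and the counting inequality is correct. But there are two genuine gaps. The first is the gluing step you defer: Theorem C only places $A'$ \emph{inside} an arithmetic progression of length at most $2k-5$; it does not make $A'$ a progression or a union of two. A proper subset of a progression together with a far-away point $a_{k-1}$ is in general neither contained in a short progression nor a union of two progressions with a common difference, so the conclusion does not follow from containment alone. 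One must feed the exact equality $|2A|=3k-3$ back in (e.g.\ when $t=k-1$ it forces $|2A'|=2k-3$ and Theorem A makes $A'$ a genuine progression), and for intermediate $t$ this requires analysing precisely which sums $a_{k-1}+a_i$ with $i<k-1-t$ fall into $2A'$. That analysis is the content of the theorem, and it is asserted rather than carried out.

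The second gap is fatal to the plan as written. In the regime where both extreme gaps are small you propose induction on $k$, but the induction has nothing to bite on. Since $a_{k-2}>2a_{k-2}-a_{k-1}$ always, one has $t\geqslant 1$, so peeling gives only $|2A'|\leqslant 3k-5=3(k-1)-2$. Theorem C applies to the $(k-1)$-element set $A'$ only when $|2A'|\leqslant 3(k-1)-4$, and your induction hypothesis applies only when $|2A'|=3(k-1)-3$; the value $3(k-1)-2$ is covered by neither, and peeling from the bottom suffers the same loss by symmetry. In addition, $\gcd(A')=1$ can fail after peeling (take $A$ consisting of $0$, even numbers, and an odd $l$), and ``rescaling'' does not restore the inductive setup for free. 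This loss of control of the doubling constant under element removal is exactly why the known proofs of the $3k-3$ theorem do not proceed by naive peeling induction but instead use reduction modulo $l$ (studying the image of $A$ in $\mathbb{Z}/l\mathbb{Z}$) or covering arguments; without a replacement for the inductive step, the hard case of your argument does not go through.
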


Continue these works, Freiman \cite{Freiman64} described the structure of $A$ when $|2A|<c|A|$, where $c$ is any given positive number, which known as the famous Freiman's theorem, an improved version of a proof was presented in \cite{Freiman3}.
Freiman \cite{Freiman2} himself remarked the comparative difficulty of the above results as ``
To prove Theorem A took one minute and Theorem C was studied in three minutes.
The proof of Theorem D took one month.
Proof of the Freiman's Theorem took five years."

 Many well-known results concerning sumsets should have analogues for restricted sumsets, but in many cases such results are difficult to establish or are not known at all.
For instance, the Cauchy-Davenport theorem, which was established by A.L. Cauchy as early as 1813 and rediscovered by H. Davenport in 1935,
the analogue restricted result which known as the Erd\H{o}s-Heilbronn conjecture, was not proven until 1994 by J. Dias da Silva and Y.O. Hamidoune.

For the restricted sumset $2^{\wedge}A$, V.F. Lev \cite{Lev2} remarked that the following conjecture (in personal communication with Freiman).
\begin{conj} Let $A$ be a set of $k>7$ integers such that $A\subseteq [0, l]$, $0, l\in A$ and $\gcd(A)=1$. Then
$$|2^{\wedge}A|\geqslant \left\{\begin{array}{ll}
l+k-2 &\text{ if } l\leqslant 2k-5, \\
3k-7 &\text{ if } l\geqslant 2k-4. \end{array}\right.$$
\end{conj}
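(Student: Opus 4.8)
The plan is to isolate the single large element $a_{k-1}$ and reduce to the \emph{dense} set $A'=A\setminus\{a_{k-1}\}=\{0=a_0<a_1<\cdots<a_{k-2}\}$, which has $k':=k-1$ elements and diameter $l':=a_{k-2}\leqslant 2k-5=2k'-3$. First I would record the decomposition
$$2^{\wedge}A=2^{\wedge}A'\cup S,\qquad S:=\{a_i+a_{k-1}:0\leqslant i\leqslant k-2\},$$
where $S$ consists of $k-1$ distinct integers in $[a_{k-1},\,a_{k-2}+a_{k-1}]$, so that $|2^{\wedge}A|=|2^{\wedge}A'|+N$ with $N:=|S\setminus 2^{\wedge}A'|$. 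A one-line counting argument gives $\gcd(A')=1$: if $d=\gcd(A')\geqslant 2$ then every element of $A'$ is a multiple of $d$, so $k-1\leqslant l'/d+1$, i.e. $d(k-2)\leqslant l'\leqslant 2k-5$, forcing $2(k-2)\leqslant 2k-5$, which is false. Thus the reduced set $A'$ again satisfies the standing hypotheses.

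Because $l'\leqslant 2k'-3$, the set $A'$ falls in the range where the conjecture is \emph{already} established (the abstract records that it is open only for diameter $\geqslant 2k'-2$). Applying that known bound to $A'$ (valid once $k'>7$, i.e. $k\geqslant 9$; the boundary value $k=8$, where $A'$ has only seven elements, I would verify directly) yields
$$|2^{\wedge}A'|\geqslant\begin{cases}a_{k-2}+k-3, & a_{k-2}\leqslant 2k-7,\\[2pt] 3k-10, & a_{k-2}\in\{2k-6,\,2k-5\}.\end{cases}$$
Hence it suffices to prove $N\geqslant 2k-4-a_{k-2}$ in the first range and $N\geqslant 3$ in the second; the two requirements agree at the common boundary $a_{k-2}=2k-7$. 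As a cross-check, Theorem B applied to the full set $A$ (permissible since $a_{k-1}\geqslant 2k-2$) gives $|2A|\geqslant 3k-3$, which bounds how many doublings can be "lost" in passing from $2A$ to $2^{\wedge}A$.

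The heart of the argument is the lower bound on $N$. The two largest members of $S$, namely $a_{k-2}+a_{k-1}$ and $a_{k-3}+a_{k-1}$, exceed $\max 2^{\wedge}A'=a_{k-3}+a_{k-2}$ and so are automatically new, giving $N\geqslant 2$ for free; more generally $a_i+a_{k-1}>a_{k-3}+a_{k-2}$ exactly when $a_i>a_{k-3}-g$, where $g:=a_{k-1}-a_{k-2}\geqslant 3$. When $A'$ is compressed (so $a_{k-2}$ is small) the gap $g$ is large, $a_{k-3}-g$ turns negative, and \emph{all} $k-1$ translates clear the top of $2^{\wedge}A'$, giving $N=k-1$; this is precisely the regime in which the required value of $N$ is largest, so the two effects pull in the same direction.

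I expect the main obstacle to be the intermediate regime, where only a few translates clear $\max 2^{\wedge}A'$ yet $a_{k-2}+k-3$ alone does not reach $3k-7$. There the remaining new elements must come from translates $a_i+a_{k-1}$ that land in the \emph{interior gaps} of $2^{\wedge}A'$, and controlling these is the crux. I would resolve it by a structural dichotomy on $A'$: if $|2^{\wedge}A'|$ (equivalently $|2A'|$) is near its minimum, Freiman's $3k-4$ theorem (Theorem C) forces $A'$ into a short arithmetic progression, whence $2^{\wedge}A'$ is essentially an interval with no usable gaps but with all but $O(1)$ translates above its maximum; if instead $|2^{\wedge}A'|$ exceeds its minimum, that slack already absorbs the deficit in $N$. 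Making this trade-off quantitative — showing that the length of the governing progression simultaneously controls the size of $2^{\wedge}A'$, the gaps it leaves, and the number of translates falling into them — is where the real work lies. It is also where the hypothesis $a_{k-2}<2k-4$ is essential: it is exactly what keeps $l'\leqslant 2k'-3$, so that $A'$ stays in the solved range; once $a_{k-2}\geqslant 2k-4$ the reduced set $A'$ itself enters the open range and the reduction loses its force, consistent with the problem remaining open there.
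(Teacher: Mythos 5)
The statement you are proving is the Freiman--Lev \emph{conjecture}, and the paper contains no proof of it: the authors prove it only under the additional hypotheses $a_{k-1}\geqslant 2k-2$ and $a_{k-2}<2k-4$ (Theorem \ref{T1}), the range $l\leqslant 2k-5$ being Lev's Theorem F and $l\in\{2k-4,2k-3\}$ being Theorem G from their earlier work; the case $a_{k-1}\geqslant 2k-2$, $a_{k-2}\geqslant 2k-4$ is explicitly left open. Your proposal silently imports the missing hypothesis: the very first step asserts $l'=a_{k-2}\leqslant 2k-5=2k'-3$, which is exactly the assumption $a_{k-2}<2k-4$ and is not part of the conjecture. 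You acknowledge this yourself in the last sentence, where you note that once $a_{k-2}\geqslant 2k-4$ the set $A'$ ``enters the open range and the reduction loses its force.'' So at best you are sketching Theorem \ref{T1}, not the conjecture, and no argument for the genuinely open case is offered.

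Even within the restricted case your crux step has a real gap. You propose to lower-bound $N=|S\setminus 2^{\wedge}A'|$ by a dichotomy: if $|2^{\wedge}A'|$ is near its minimum, invoke Freiman's $3k-4$ theorem (Theorem C) to force $A'$ into a short progression. But Theorem C is a statement about the \emph{unrestricted} sumset, and $|2^{\wedge}A'|$ being close to $3k-10$ does not force $|2A'|\leqslant 3k'-4$: one only has $|2A'|\leqslant|2^{\wedge}A'|+k'$, which leaves $|2A'|$ as large as roughly $4k$, far outside the hypothesis of Theorem C. This is precisely why the paper cannot shortcut through the classical inverse theorems and instead must prove full inverse results for the restricted sumset itself --- Theorem \ref{T2} (with the exact characterization of the equality case $|2^{\wedge}A|=3k-7$, including the exceptional unions of progressions with difference $3$), Theorem \ref{T3}, and Remarks \ref{r2.1}--\ref{r2} --- which occupy most of the paper. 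The paper's decomposition in the hard case is also different from yours: rather than deleting the top element, it chooses the largest index $s$ with $a_{s-1}\geqslant 2(s-1)$ and splits $A$ into two \emph{overlapping} pieces $A_{1}=\{a_{0},\ldots,a_{s+1}\}$ and $A_{2}=\{a_{s-1},\ldots,a_{k-1}\}$, applying Theorem B to $A_{1}$ and Theorem \ref{T2} to a translate of $A_{2}$, then ruling out simultaneous equality using the structural information from Theorems \ref{T2} and \ref{T3}. Your ``translates clearing the top of $2^{\wedge}A'$'' heuristic is sound as far as it goes, but the intermediate regime you flag as the obstacle is exactly where the argument is missing, and the tool you propose for it does not apply.
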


G.A. Freiman, L. Low and J. Pitman \cite{Freiman1999} obtained the following result.
\begin{thmE}
Let $A$ be a set of $k\geqslant 3$ integers such that $A\subseteq [0, l]$, $0, l\in A$ and $\gcd(A)=1$. Then
$$|2^{\wedge}A|\geqslant \left\{\begin{array}{ll}
0.5(l+k)+k-3.5 &\text{ if } l\leqslant 2k-3, \\
2.5k-5 &\text{ if } l\geqslant 2k-2. \end{array}\right.$$
\end{thmE}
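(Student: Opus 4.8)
The engine of the proof is the elementary identity relating the restricted and ordinary sumsets. Writing $D=\{\,2a_0,2a_1,\dots,2a_{k-1}\,\}$ for the set of doubled elements, one has $2A=2^{\wedge}A\cup D$, and a value $2a_i$ lies in $2^{\wedge}A$ precisely when $a_i$ is the midpoint of two distinct elements of $A$. Hence, setting $B=\{\,a_i\in A : 2a_i\notin 2^{\wedge}A\,\}$ (the \emph{non-midpoints}), the members of $D\setminus 2^{\wedge}A$ are distinct and in bijection with $B$, so that $|2^{\wedge}A|=|2A|-|B|$. Since the extreme elements $0$ and $l$ can never be midpoints we have $\{0,l\}\subseteq B$, while trivially $|B|\le k$. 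Feeding Theorem B into $|2^{\wedge}A|\ge|2A|-k$ already yields $|2^{\wedge}A|\ge l$ when $l\le 2k-3$ and $|2^{\wedge}A|\ge 2k-3$ when $l\ge 2k-2$. Both fall short of the claimed bounds by a $\Theta(k)$ term (about $k/2$ in the large-$l$ case), so the whole problem is to recover these missing elements; equivalently, to show that the non-midpoint set $B$ cannot be large unless $|2A|$ is correspondingly large.

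First I would localise the missing elements. Every element of $2^{\wedge}A$ lies in $(0,2l)$, and the $2k-3$ boundary sums $\{\,a_0+a_j:1\le j\le k-1\,\}$ and $\{\,a_i+a_{k-1}:1\le i\le k-2\,\}$ split across the halves $(0,l]$ and $(l,2l)$, recovering $A\setminus\{0\}$ in the lower half and a reflected copy in the upper half. The remaining elements of $2^{\wedge}A$ are exactly the gaps $g\in[0,l]\setminus A$ admitting an honest representation $g=a_i+a_j$ with $i<j$, together with their counterparts in the upper half, which are governed by the reflected set $A'=l-A$ (for which $|2^{\wedge}A'|=|2^{\wedge}A|$). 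As $A$ has $l+1-k$ gaps, a short computation shows that in the range $l\le 2k-3$ the target is equivalent to the assertion that, counted across both halves, at least about half the gaps receive such an honest representation; for $l\ge 2k-2$ one needs only a fixed surplus of roughly $k/2$ such gaps. This gap-hitting count is the combinatorial core.

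To control the gaps I would argue by a dichotomy on the size of $|2A|$. When $|2A|$ exceeds its Freiman minimum by enough, the crude estimate $|2^{\wedge}A|\ge|2A|-k$ already clears the target, so one may assume $|2A|$ is close to extremal. In that regime the structure theorems apply: Theorem C forces $A$ into an arithmetic progression of length at most $k+b$ when $|2A|=2k-1+b\le 3k-4$, and Theorem D pins down the case $|2A|=3k-3$ as a progression of length $\le 2k-1$ or a union of two progressions with a common difference. For each such rigid shape one computes $B$ explicitly and checks that $|B|$ is bounded by an absolute constant, whence $|2^{\wedge}A|=|2A|-|B|$ comfortably beats the required bound. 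The two regimes are then glued along the Freiman thresholds $l=2k-3$ and $l=2k-2$, where the two pieces of the claimed bound already meet at $2.5k-5$.

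The hard part is the intermediate range, where $|2A|$ exceeds the extremal value by a linear amount yet is still too small for the crude bound to finish, and where no structure theorem from the excerpt applies directly. Here one must argue combinatorially that a value of $2A$ fails to have a distinct-summand representation only under strong constraints—essentially, each doubles-only value $2a_m$ forces the absence in $A$ of every symmetric pair $a_m-t,\,a_m+t$—and then show that too many such obstructions would push $|2A|$ above the crude threshold, a contradiction. Converting this qualitative trade-off into the exact constants $0.5(l+k)+k-3.5$ and $2.5k-5$, uniformly in the positions of the gaps, is the step I expect to demand the most care.
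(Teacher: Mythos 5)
The paper does not actually prove Theorem E; it is quoted from Freiman, Low and Pitman \cite{Freiman1999}, so there is no internal proof to compare yours against. Judged on its own, your write-up is a strategy outline with a gap that you yourself flag but do not close. Your opening reduction is correct and is the right starting point: since $2A=2^{\wedge}A\cup\{2a_i\}$ and the doubled values are distinct, $|2^{\wedge}A|=|2A|-|B|$ where $B$ is the set of non-midpoints, and the crude bound $|B|\leqslant k$ combined with Theorem B falls short of the target by roughly $k/2$. The entire content of the theorem is therefore a quantitative bound of the shape $|B|\leqslant \tfrac{1}{2}\bigl(\min(l,2k-3)-k\bigr)+\tfrac{7}{2}$ (or a trade-off between $|B|$ and the excess of $|2A|$), and this is exactly what you do not supply.

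The specific failure is in your dichotomy. For $l\geqslant 2k-2$ the crude bound suffices only when $|2A|\geqslant 3.5k-5$, while Theorem C is vacuous in this range (it would force $A$ into a progression of length at most $2k-3$, contradicting $0,l\in A$ and $\gcd(A)=1$) and Theorem D covers only the single value $|2A|=3k-3$. That leaves the interval $3k-2\leqslant |2A|\leqslant \lceil 3.5k\rceil-6$, of width about $k/2$, with no tool at all; an analogous untreated band appears for $l\leqslant 2k-3$. You name this as ``the hard part'' and describe qualitatively what an argument would have to do, but a description of the difficulty is not a proof. Incidentally, your claim that in the Theorem C/D structured cases $|B|$ is bounded by an absolute constant also needs care: for bi-arithmetic sets such as unions of two progressions with common difference $2$, a positive proportion of the elements can fail to be midpoints, which is precisely why the true bound on $|B|$ is of order $k/2$ rather than $O(1)$. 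The standard route (and the one consistent with the constants $2.5k-5$ and $0.5(l+k)+k-3.5$) is to exploit the antisymmetry forced at each non-midpoint $a$ --- namely $|\{a-t,a+t\}\cap A|\leqslant 1$ for all $t\geqslant 1$, hence $|A\cap[\max(0,2a-l),\min(2a,l)]|\leqslant \min(a,l-a)+1$ --- and to turn these constraints into a direct count of $|B|$, with no appeal to the structure theorems. That counting argument is the missing core of your proposal.
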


V.F. Lev \cite{Lev2} obtained a result nearer to the above conjecture by himself.
\begin{thmF}
Let $A$ be a set of $k\geqslant 3$ integers such that $A\subseteq [0, l]$, $0, l\in A$ and $\gcd(A)=1$. Then
$$|2^{\wedge}A|\geqslant \left\{\begin{array}{ll}
l+k-2 &\text{ if } l\leqslant 2k-5, \\
(\theta+1)k-6 &\text{ if } l\geqslant 2k-4, \end{array}\right.$$
where $\theta=(1+\sqrt{5})/2$.
\end{thmF}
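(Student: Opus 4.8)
The plan is to argue by induction on $k$, treating the two regimes $l\le 2k-5$ and $l\ge 2k-4$ separately, with small $k$ checked directly to seed the induction. The backbone in every case is the ``staircase'' of pairwise distinct sums
$$a_0+a_1<a_0+a_2<\cdots<a_0+a_{k-1}<a_1+a_{k-1}<\cdots<a_{k-2}+a_{k-1},$$
giving the unconditional estimate $|2^{\wedge}A|\ge 2k-3$; the task in each regime is to manufacture the extra sums the statement requires. I write $d_i=a_i-a_{i-1}$ for the consecutive gaps, and use $\gcd(A)=1$ precisely to exclude single–progression configurations, which are the ones on which the staircase bound is sharp.

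In the dense regime $l\le 2k-5$ the plan is to compare $2^{\wedge}A$ with the full sumset $2A$. Since $2A=2^{\wedge}A\ \sqcup\ E$, where $E$ collects the diagonal sums $2a_i$ having no representation $a_p+a_q$ with $p\ne q$, one has $|2^{\wedge}A|=|2A|-|E|$. Freiman's $2A$ theorem (Theorem B) supplies $|2A|\ge l+k$, and the endpoints $2a_0,2a_{k-1}$ always belong to $E$, so it is enough to establish
$$\#\{\,1\le i\le k-2:\ 2a_i\notin 2^{\wedge}A\,\}\ \le\ |2A|-(l+k).$$
In words, every interior element that fails to be the midpoint of two elements of $A$ must be paid for by a surplus of $|2A|$ over the Freiman minimum. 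This is where I would feed in the structural content of Freiman's $3k-4$ theorem (Theorem C): a non-midpoint $a_i$ is a local defect pushing $A$ away from an arithmetic progression, and such defects inflate $2A$. Carrying out this bookkeeping yields $|2^{\wedge}A|\ge (l+k)-2$.

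In the spread regime $l\ge 2k-4$ the plan is to delete the top element and recurse on $A'=A\setminus\{a_{k-1}\}$, using $2^{\wedge}A=2^{\wedge}A'\cup(a_{k-1}+A')$. Every sum $a_{k-1}+a_i$ exceeding $\max 2^{\wedge}A'=a_{k-3}+a_{k-2}$ is genuinely new, and the number of such sums — the \emph{gain} — equals $\#\{i:\ a_i>a_{k-2}-d_{k-1}-d_{k-2}\}$, which is always at least $2$. Before recursing I would normalise: if $\gcd(A')=g>1$ I pass to $A'/g$, and if the new diameter $a_{k-2}$ drops below $2(k-1)-4$ I invoke the dense bound for $A'$ just established, which is then more than enough.

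The main obstacle is that a one-step gain of $2$ undershoots the slope $\theta+1=\theta^2\approx2.618$ the statement demands, so the induction does not close greedily; this is exactly why the golden ratio appears. A gain of precisely $2$ forces a rigid relation among the top gaps (namely $d_{k-3}\ge d_{k-1}$), which keeps $A'$ spread out and compels a strictly larger gain at the next deletion. Quantifying how a run of minimal gains must be compensated leads to a two–term recursion governed by $x^2=x+1$, whose root $\theta=(1+\sqrt5)/2$ fixes the sustainable slope at $\theta+1$. I emphasise that this constant reflects the recursion in the argument rather than any extremal set — consistent with the bound being weaker than the conjectured truth $3k-7$ — and that making the compensation mechanism quantitatively tight enough to realise the exponent $\theta$ is the crux of the whole proof.
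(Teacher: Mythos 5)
Theorem F is not proved in this paper at all: it is Lev's theorem, quoted from \cite{Lev2} as background, so there is no in-paper proof to compare against. Your proposal must therefore stand on its own, and as written it is an outline whose decisive steps are missing in both regimes.

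In the dense regime $l\leqslant 2k-5$ your reduction $|2^{\wedge}A|=|2A|-|E|$ is fine, and $2a_0,2a_{k-1}\in E$ is correct, but the whole burden lands on the inequality $\#\{1\leqslant i\leqslant k-2: 2a_i\notin 2^{\wedge}A\}\leqslant |2A|-(l+k)$, which you assert via an appeal to ``defects inflate $2A$'' without any argument. Theorem C does not deliver this: it only applies when $|2A|\leqslant 3k-4$ and then describes $A$ as a subset of a short progression; it does not give a one-for-one accounting between interior non-midpoints and surplus elements of $2A$. That accounting is the entire content of this case and is nowhere carried out. In the spread regime $l\geqslant 2k-4$ you correctly observe that deleting $a_{k-1}$ gains at least the two sums $a_{k-1}+a_{k-2}$ and $a_{k-1}+a_{k-3}$, and you correctly identify that a sustained gain of $2$ per step falls short of the required slope $\theta+1$ — but you then explicitly concede that the compensation mechanism (minimal gain forcing a larger gain later, closing via the recursion $x^2=x+1$) is ``the crux of the whole proof'' and leave it unproved. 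That concession is the gap: the theorem is precisely the statement that this Fibonacci-type bookkeeping can be made quantitative, and no part of your text does so. In addition, your fallback step is false as stated: if $a_{k-2}<2(k-1)-4$ you invoke the dense bound $|2^{\wedge}A'|\geqslant a_{k-2}+k-3$ and call it ``more than enough,'' but with, say, $a_{k-2}\approx 3k/2$ and a gain of only $2$ this yields roughly $2.5k$, which is below $(\theta+1)k-6\approx 2.618k-6$ for all large $k$; so even the base of your recursion does not close without the same unproved compensation argument.
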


T. Schoen \cite{Sch} gave some progress of the above conjecture by combining a result of I. Z. Ruzsa \cite{Ruzsa} with a theorem of J. Bourgain  \cite{Bour}.
For various research, the reader can see \cite{Nathanson1} and \cite{Tang}.
For over twenty years, Freiman-Lev conjecture has remained an open unsolved problem when $l\geqslant 2k-4$.
Recently, the authors of this paper \cite{Wang} have proven that Freiman-Lev conjecture holds for $l = 2k-4$ and $l = 2k-3$.

\begin{thmG}
Let $A$ be a set of $k\geqslant 5$ integers such that $A\subseteq [0, l]$, $0, l\in A$ and $\gcd(A)=1$.
If $2k-4\leqslant l\leqslant 2k-3$, then $|2^{\wedge}A|\geqslant 3k-7$.
\end{thmG}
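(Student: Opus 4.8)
The plan is to pass to the complement and count the elements of the full interval $[1,2l-1]$ that are \emph{missing} from $2^{\wedge}A$. Write $\bar{A}=[0,l]\setminus A$ for the set of holes, put $r=|\bar{A}|=l+1-k$, and set $M=[1,2l-1]\setminus 2^{\wedge}A$; since $a_{0}+a_{1}\geqslant 1$ and $a_{k-2}+a_{k-1}\leqslant 2l-1$ we have $2^{\wedge}A\subseteq[1,2l-1]$, so $|2^{\wedge}A|=(2l-1)-|M|$ and the target $|2^{\wedge}A|\geqslant 3k-7$ is equivalent to $|M|\leqslant 2l-3k+6$. Because $0,l\in A$, every $a\in A$ with $1\leqslant a\leqslant l-1$ lies in $2^{\wedge}A$ via $0+a$, and the shifted value $l+a$ lies in $2^{\wedge}A$ via $l+a$; hence a value in $[1,l]$ can be missing only if it is a hole, and a value in $[l+1,2l-1]$ only if it has the form $l+h$ for a hole $h$. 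Writing $L=\{h\in\bar{A}:h\notin 2^{\wedge}A\}$ and $H=\{h\in\bar{A}:l+h\notin 2^{\wedge}A\}$, this yields the exact decomposition $M=L\sqcup(l+H)$, so $|M|=|L|+|H|=|L\cup H|+|L\cap H|\leqslant r+|L\cap H|$. Thus everything reduces to the bound $|L\cap H|\leqslant l-2k+5$, i.e. $|L\cap H|\leqslant 2$ when $l=2k-3$ and $|L\cap H|\leqslant 1$ when $l=2k-4$.

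The engine for bounding $L\cap H$ is a disjoint-pairs count. If $h\in L$, then for every representation $h=x+(h-x)$ with $1\leqslant x<h/2$ at least one of $x,h-x$ is a hole; these $\lfloor(h-1)/2\rfloor$ pairs are pairwise disjoint subsets of $[1,h-1]$, so $[1,h-1]$ contains at least $\lfloor(h-1)/2\rfloor$ holes. Writing the representations of $l+h$ as $(h+j)+(l-j)$ and running the same argument on $[h+1,l-1]$ shows that $h\in H$ forces at least $\lfloor(l-h-1)/2\rfloor$ holes in $[h+1,l-1]$. Since $h$ itself is a hole and the three regions $[1,h-1]$, $\{h\}$, $[h+1,l-1]$ are disjoint and contain all of $\bar{A}$, any $h\in L\cap H$ satisfies
\[
r\;\geqslant\;\Big\lfloor\tfrac{h-1}{2}\Big\rfloor+1+\Big\lfloor\tfrac{l-h-1}{2}\Big\rfloor .
\]
A direct parity computation shows the right-hand side equals $r$ for \emph{every} $h$ when $l=2k-3$, and equals $r$ exactly when $h$ is even (while exceeding $r$ when $h$ is odd) when $l=2k-4$. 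Consequently membership in $L\cap H$ forces \emph{equality} throughout: the numbers of holes below $h$, at $h$, and above $h$ are \emph{exactly} $\lfloor(h-1)/2\rfloor,\,1,\,\lfloor(l-h-1)/2\rfloor$, and moreover each of the disjoint pairs above contains \emph{exactly one} hole. Equivalently, writing $\chi$ for the indicator of $\bar{A}$ on $[1,l-1]$, each $h\in L\cap H$ imposes the exclusive-or relations $\chi(x)+\chi(h-x)=1$ for $1\leqslant x<h/2$ and $\chi(h+j)+\chi(l-j)=1$ for $1\leqslant j<(l-h)/2$ (together with $\chi(h/2)=0$ when $h$ is even).

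It then remains to show that these rigid symmetry constraints cannot hold about too many distinct centres for one fixed hole set of size $r$. I would finish by assuming $h_{1}<h_{2}<\cdots$ lie in $L\cap H$ and feeding the exclusive-or relations for different centres into one another: comparing the forced hole-counts on the nested intervals $[1,h_{i}-1]$ and using that each centre consumes \emph{all} $r$ holes pins down $\chi$ on overlapping blocks, whereupon a third centre over-determines $\chi$ and contradicts $|\bar{A}|=r$ when $l=2k-3$ (and for $l=2k-4$ the restriction that every centre be even already produces a contradiction at the second centre). This compatibility analysis is the main obstacle: the counting inequality alone holds with equality for all (resp.\ all even) $h$, so the bound $|L\cap H|\leqslant 2$ (resp.\ $\leqslant1$) must be extracted from the exclusive-or structure rather than from cardinalities, and it is genuinely tight---for $k=5$, $l=7$ the set $A=\{0,2,4,5,7\}$ has $\bar{A}=\{1,3,6\}$, $L\cap H=\{1,3\}$ and $|2^{\wedge}A|=8=3k-7$, while for $k=6$, $l=8$ the set $A=\{0,1,5,6,7,8\}$ has $L\cap H=\{2\}$ and $|2^{\wedge}A|=11=3k-7$. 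The hypotheses $k\geqslant 5$ and $\gcd(A)=1$ enter to dispose of the small cases and of dilated configurations in which the exclusive-or relations could be realised about additional centres.
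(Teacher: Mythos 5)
Your reduction is sound and, up to notation, is exactly the framework the paper itself uses: your set $L\cap H$ is the paper's set $W$ from (\ref{e2-01}) (the $w\in[0,l]\setminus A$ with $w\notin 2^{\wedge}A$ and $w+l\notin 2^{\wedge}A$), and your count $|2^{\wedge}A|=(2l-1)-|L|-|H|\geqslant(2l-1)-r-|L\cap H|$ is the same bookkeeping that opens the proof of Theorem~\ref{T3}, where it appears as $|2^{\wedge}A|\geqslant|T|+|[0,a_{k-1}]\setminus(A\cup W)|$. The disjoint-pairs count, the parity computation showing it is tight for every $h$ when $l=2k-3$ and for every even $h$ when $l=2k-4$, and the two extremal examples are all correct.

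The gap is that the one statement carrying all of the difficulty --- $|L\cap H|\leqslant 2$ when $l=2k-3$ and $|L\cap H|\leqslant 1$ when $l=2k-4$ --- is never proved. You rightly observe that it cannot come from cardinalities and must be extracted from the exclusive-or relations, but the paragraph beginning ``I would finish by assuming $h_{1}<h_{2}<\cdots$ lie in $L\cap H$'' is a plan, not an argument: nothing is actually derived by combining the relations $\chi(x)+\chi(h_{i}-x)=1$ and $\chi(h_{i}+j)+\chi(l-j)=1$ for two or three centres, and the assertions that ``a third centre over-determines $\chi$'' and that for $l=2k-4$ ``the second centre already produces a contradiction'' are exactly what needs proof. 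This is the content of Theorem 1.1 of \cite{Wang}, quoted here as Lemma~\ref{L3-1}, and it is genuinely delicate: when $|W|=2$ the relations about two centres are \emph{consistent} and force $A$ into the rigid form $\{a_{k-1}\}\cup(U+H)\cup\bigcup_{v\in V}\mathcal{D}^{-}(v)$ of Lemma~\ref{L3-2}, so the compatibility analysis must separate the two-centre case (realisable) from the three-centre case (not realisable), with $\gcd(A)=1$ entering to kill dilated configurations; and the $l=2k-4$ case needs the strictly stronger bound $|W|\leqslant 1$, which your parity restriction to even centres begins but does not finish. As written, the proposal reduces Theorem G to the key lemma of \cite{Wang} without proving that lemma.
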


In this paper, we give the new progress on the Feriman-Lev conjecture.

\begin{thm}\label{T1}
Let $k\geqslant 3$.
Let $A=\{0=a_{0}<a_{1}<\cdots<a_{k-1}\}$ such that $a_{k-2}<2k-4$, $a_{k-1}\geqslant 2k-2$ and $\gcd(A)=1$.
Then $|2^{\wedge}A|\geqslant 3k-7$.
\end{thm}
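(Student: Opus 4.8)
The plan is to strip off the large element and study $B=A\setminus\{a_{k-1}\}=\{a_0,\dots,a_{k-2}\}$, a set of $k-1$ integers contained in $[0,a_{k-2}]$ with $a_{k-2}\le 2k-5=2(k-1)-3$. Two preliminary reductions are immediate. First, $\gcd(B)=1$: the $k-2$ nonzero elements of $B$ are distinct positive integers, so a common divisor $d\ge 2$ would make the largest of them at least $d(k-2)\ge 2k-4$, contradicting $a_{k-2}\le 2k-5$; hence Theorems F and G are available for $B$. Second, $k\le 4$ is settled by the Erd\H{o}s--Heilbronn bound $|2^{\wedge}A|\ge 2k-3\ge 3k-7$, and $k=5$ is a short finite check, so I would assume $k\ge 6$.

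The engine is the decomposition $2^{\wedge}A=2^{\wedge}B\cup(a_{k-1}+B)$. Since $a_{k-1}+B$ has exactly $k-1$ elements,
\[
|2^{\wedge}A|=|2^{\wedge}B|+(k-1)-\omega,\qquad \omega:=\bigl|(a_{k-1}+B)\cap 2^{\wedge}B\bigr|.
\]
If $a_{k-1}>a_{k-2}+a_{k-3}=\max 2^{\wedge}B$ then no sum $a_{k-1}+a_m$ can meet $2^{\wedge}B$, so $\omega=0$ and the Erd\H{o}s--Heilbronn bound $|2^{\wedge}B|\ge 2k-5$ already yields $|2^{\wedge}A|\ge 3k-6$; I may therefore assume $2k-2\le a_{k-1}\le a_{k-2}+a_{k-3}$. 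In this range each overlapping sum lies in $[a_{k-1},a_{k-2}+a_{k-3}]$, forcing $a_m\le u:=a_{k-2}+a_{k-3}-a_{k-1}$, so $\omega\le c:=|B\cap[0,u]|$; trivially also $\omega\le\beta:=|2^{\wedge}B\cap[a_{k-1},\infty)|$. Writing $\alpha:=|2^{\wedge}B\cap[0,a_{k-1})|$, the desired bound $|2^{\wedge}A|\ge 3k-7$ is equivalent to the single inequality $\alpha+(\beta-c)^{+}\ge 2k-6$.

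Thus it suffices to verify one of the two sufficient conditions $\alpha\ge 2k-6$ or $|2^{\wedge}B|-c\ge 2k-6$. I would attack $\alpha$ through the chain $a_0+a_{k-2}<a_1+a_{k-2}<\cdots<a_{k-3}+a_{k-2}$ together with the sums $a_0+a_j=a_j$: if $q$ denotes the number of indices $i$ with $a_i+a_{k-2}<a_{k-1}$, these two families give $\alpha\ge (k-2)+(q-1)=k-3+q$ and $\beta\ge k-2-q$. When $q\ge k-3$ this immediately gives $\alpha\ge 2k-6$ and we are done. When $q$ is small, $B$ has few elements near the top, so its many small elements accumulate small sums that should again push $\alpha$ up, while simultaneously keeping $c$ under control; here I would feed the Theorem F bound $|2^{\wedge}B|\ge a_{k-2}+k-3$ (valid for $a_{k-2}\le 2k-7$) or the Theorem G bound $|2^{\wedge}B|\ge 3k-10$ (valid for $a_{k-2}\in\{2k-6,2k-5\}$) into the second condition.

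The main obstacle is the boundary case $a_{k-2}\in\{2k-6,2k-5\}$ in which $|2^{\wedge}B|$ sits at its Theorem~G minimum $3k-10$ while $c$ can be as large as $k-3$: the crude estimates $\omega\le c$ and $|2^{\wedge}B|\ge 3k-10$ then deliver only $|2^{\wedge}A|\ge 3k-8$, one short of the target. Recovering the missing unit requires showing that these two extremes are incompatible---that a near-minimal $2^{\wedge}B$ is forced to have gaps inside the window $[a_{k-1},a_{k-2}+a_{k-3}]$ that the translate $a_{k-1}+B$ cannot entirely fill (so that $\omega<c$), or that a second chain $a_i+a_{k-3}$ supplies a genuinely new large sum (so that $\beta>c$). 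Pinning this down amounts to invoking the structural description of sets whose restricted sumset is close to the Freiman--Lev minimum, and that is where the real work of the proof lies.
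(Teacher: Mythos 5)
Your reduction to $B=A\setminus\{a_{k-1}\}$ and the bookkeeping identity $|2^{\wedge}A|=|2^{\wedge}B|+(k-1)-\omega$ are sound, as are the easy cases ($k\le 5$, the case $a_{k-1}>a_{k-2}+a_{k-3}$, and the case $q\ge k-3$). But the proof does not close. In the intermediate regime you assert that when $q$ is small the many small elements of $B$ ``should again push $\alpha$ up''; that is a hope, not an argument. Feeding Theorem F's bound $|2^{\wedge}B|\ge a_{k-2}+k-3$ into the condition $|2^{\wedge}B|-c\ge 2k-6$ requires $c\le a_{k-2}-k+3$, which you never establish: for $a_{k-2}=2k-7$ this demands $c\le k-4$, while the only a priori bound coming from $u<a_{k-3}$ is $c\le k-3$. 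More importantly, you concede in the final paragraph that in the boundary case $a_{k-2}\in\{2k-6,2k-5\}$ your estimates yield only $3k-8$, and that recovering the last unit requires ``the structural description of sets whose restricted sumset is close to the Freiman--Lev minimum.'' That description is not available off the shelf: Theorem G supplies only the lower bound, not the extremal or near-extremal sets. Producing exactly that structural information is what Theorems \ref{T2} and \ref{T3} and Remarks \ref{r2.1}--\ref{r2} of the paper do, and their proofs occupy most of the paper. So the decisive step of your argument is missing, not merely deferred.

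For comparison, the paper takes a different route: it first proves Theorem \ref{T2} (the case $a_i<2i$ for all $i\le k-2$) outright, and otherwise picks the largest $s$ with $a_{s-1}\ge 2(s-1)$, which forces $a_{s-1}=2s-2$ and $a_s=2s-1$, then splits $A$ into two overlapping blocks $A_1=\{a_0,\dots,a_{s+1}\}$ and $A_2=\{a_{s-1},\dots,a_{k-1}\}$ whose restricted sumsets meet in exactly three elements. Theorem G applied to $A_1$ and Theorem \ref{T2} applied to a translate of $A_2$ give $|2^{\wedge}A|\ge 3k-8$, and the missing unit is then extracted from the equality analysis in Theorems \ref{T2} and \ref{T3} and the Remarks. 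Your single-split skeleton could plausibly be completed, but only after carrying out essentially the same extremal-structure analysis that you have left undone.
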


The main method of Freiman's $2A$ theorem is to make the following classification discussion:
\begin{itemize}
  \item $a_{i}<2i$ for each integer $1\leqslant i\leqslant k-2$;
  \item $a_{k-2}<2k-4$, but $a_{i}\geqslant 2i$ for some integer $1\leqslant i\leqslant k-3$;
  \item   $a_{k-2}\geqslant 2k-4$.
\end{itemize}

In this paper, we deal the first two cases for restricted sumsets.
The first case is described as Theorem \ref{T2} and we determine the structure of $A$ when $|2^{\wedge}A|=3k-7$ under those restrictive conditions.
To solve the second case, we characterize the structure of $A$ satisfying $|2^{\wedge}A|=3k-7$ in the case $a_{k-1}=2k-3$, which is described as Theorem \ref{T3}.

\begin{thm}\label{T2}
Let $k\geqslant 3$.
Let $A=\{0=a_{0}<a_{1}<\cdots<a_{k-1}\}$ such that $a_{i}<2i$ for all $i=1, \ldots, k-2$, $a_{k-1}\geqslant 2k-2$ and $\gcd(A)=1$.
Then
$$|2^{\wedge}A|\geqslant 3k-7. $$
Moreover, $|2^{\wedge}A|=3k-7$ if and only if $k\geqslant 6$ and one of the following cases holds:

(1) $k\equiv 0\pmod{3}$ and
$$A=\left([0,k-3]\cap 3\mathbb{Z}\right)\cup \left([1,2k-2]\cap (3\mathbb{Z}+1)\right);$$

(2) $k\equiv 1\pmod{3}$ and
$$A=\left([0,2k-2]\cap 3\mathbb{Z}\right)\cup \left([1,k-3]\cap (3\mathbb{Z}+1)\right).$$
\end{thm}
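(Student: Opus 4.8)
My plan is to control $2^{\wedge}A$ through the ordinary sumset $2A$, for which Freiman's Theorem B is available. First I would record the structural facts forced by the hypotheses: since $a_1<2$ we must have $a_1=1$, so $0,1\in A$ and the requirement $\gcd(A)=1$ is automatic. Because $a_{k-1}\geqslant 2k-2$, Theorem B immediately gives $|2A|\geqslant 3k-3$. Next I would set up the bookkeeping identity. Every element of $2A$ lying outside $2^{\wedge}A$ must be of the form $2a_i$ with $a_i$ \emph{not} the midpoint of two distinct elements of $A$; writing $D$ for the set of such ``non-averaging'' elements and $P=k-|D|$ for the number of elements that \emph{are} averages of two distinct elements of $A$, one gets $|2^{\wedge}A|=|2A|-|D|=|2A|+P-k$. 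Hence the theorem is equivalent to the single inequality $|2A|+P\geqslant 4k-7$, and I expect the extremal configurations to be exactly the cases of equality here.

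To prove $|2A|+P\geqslant 4k-7$ I would split on the size of $|2A|$. If $|2A|\geqslant 4k-7$ there is nothing to do, since $P\geqslant 0$. In the remaining range $3k-3\leqslant |2A|\leqslant 4k-8$ the sumset is near-minimal, so $A$ is highly structured and I would harvest many midpoints. For the boundary value $|2A|=3k-3$ this is immediate from Theorem D: $A$ is a subset of an arithmetic progression of length at most $2k-1$, or a union of two arithmetic progressions with a common difference; in the latter (generic) case two progressions of lengths $p$ and $q$ contribute $(p-2)+(q-2)=k-4$ internal midpoints, giving $P\geqslant k-4$ and hence $|2A|+P\geqslant 4k-7$. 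The genuinely awkward part is the intermediate regime $3k-2\leqslant |2A|\leqslant 4k-8$, which is covered by neither Theorem C nor Theorem D; here I would bring in the density hypothesis $a_i<2i$ directly. Peeling off $a_{k-1}$, the set $A'=A\setminus\{a_{k-1}\}$ has $k-1$ elements with $a_{k-2}\leqslant 2k-5=2(k-1)-3$, so Theorem F applies to $A'$ and bounds $|2^{\wedge}A'|$ from below; combining this with the $k-1$ large sums in $a_{k-1}+A'$ (all $\geqslant 2k-2$) recovers the required count, once the overlap between $2^{\wedge}A'$ and $a_{k-1}+A'$ is controlled.

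I expect that overlap control to be the main obstacle. The large sums $a_{k-1}+a_i$ can collide with the largest sums $a_i+a_j$ inside $2^{\wedge}A'$ precisely when $a_{k-1}$ is close to $2k-2$, and bounding the number of these collisions is exactly where $a_i<2i$ must be used quantitatively: it forces the gap sequence of $A$ to have partial sums $\sum_{j\leqslant i}(a_j-a_{j-1}-2)<0$, hence to contain long progression-like runs that manufacture the needed midpoints. A secondary technical nuisance is that when $a_{k-2}\in\{2k-6,2k-5\}$ the set $A'$ falls into the weaker golden-ratio branch of Theorem F rather than the clean bound $l+k-2$, so that narrow range will have to be handled by an ad hoc direct count.

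Finally, for the characterization of equality I would run the inequalities backwards. Equality $|2^{\wedge}A|=3k-7$ forces $|2A|=3k-3$ together with $P=k-4$, so by Theorem D the set $A$ is a union of two arithmetic progressions with a common difference $d$, each internal element supplying exactly one of the $k-4$ midpoints and no cross-progression averaging permitted. Imposing $0,1\in A$, $\gcd(A)=1$, $a_{k-1}\geqslant 2k-2$ and $a_i<2i$ then forces $d=3$ and identifies the two progressions with the residue classes $0$ and $1$ modulo $3$, their lengths being fixed by $k\bmod 3$; this produces precisely families (1) and (2), with the longer progression reaching $2k-2$ and the shorter stopping at $k-3$. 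The same constraints show such a configuration exists only for $k\geqslant 6$ (the residue condition $2k-2\equiv 1\pmod 3$ explaining why only $k\equiv 0,1\pmod 3$ occur), and the small cases $3\leqslant k\leqslant 5$ are ruled out by checking directly that the bound is then strict.
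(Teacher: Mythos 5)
Your bookkeeping identity $|2^{\wedge}A|=|2A|+P-k$ is correct, and the two outer regimes ($|2A|\geqslant 4k-7$, and $|2A|=3k-3$ via Theorem D) are treated plausibly, but there is a genuine gap exactly where you flag it: the regime $3k-2\leqslant |2A|\leqslant 4k-8$. Your fallback there --- apply Theorem F to $A'=A\setminus\{a_{k-1}\}$ and add the $k-1$ sums in $a_{k-1}+A'$ --- does not close with the stated tools. Theorem F gives only $|2^{\wedge}A'|\geqslant a_{k-2}+k-3$ (and only the weaker golden-ratio bound when $a_{k-2}\geqslant 2k-6$), so you would need the overlap $|2^{\wedge}A'\cap(a_{k-1}+A')|$ to be at most $a_{k-2}-k+3$; but the only available estimate, namely overlap $\leqslant|A'\cap[0,a_{k-3}+a_{k-2}-a_{k-1}]|\leqslant 2a_{k-2}-2k+2$, beats this threshold only when $a_{k-2}\leqslant k+1$, and you give no mechanism for larger $a_{k-2}$. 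This is precisely where the paper does its real work: it proves $[0,2k-4]\subseteq 2A'$ (Lemma \ref{L2-1}), studies the exceptional set $B=[1,2k-4]\setminus 2^{\wedge}A'$, shows its elements satisfy $b_{i+1}\geqslant 2b_i+2$ (Lemma \ref{L2-3}), and then establishes that the block $[2k-3,2k-4+b_{m-1}]$ misses $2^{\wedge}A'$ in at most one of every four consecutive integers (Lemmas \ref{P1}--\ref{P3}), none of which follows from Theorem F. Your remark that $a_i<2i$ ``forces long progression-like runs that manufacture the needed midpoints'' is the right instinct, but it is not an argument.

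Two further gaps feed into the characterization of equality. First, you deduce from $|2^{\wedge}A|=3k-7$ that $|2A|=3k-3$ and $P=k-4$; that deduction requires the \emph{strict} inequality $|2A|+P>4k-7$ throughout $3k-2\leqslant|2A|\leqslant 4k-8$, which is exactly the unproved regime, so the equality analysis inherits the gap. Second, in the case $|2A|=3k-3$, Theorem D (which requires $k>6$, so $k=6$ --- where extremal sets do exist --- must be handled separately, not just $3\leqslant k\leqslant 5$) also permits $A$ to be a subset of an arithmetic progression of length at most $2k-1$, which under $\gcd(A)=1$ and $a_{k-1}\geqslant 2k-2$ means $A\subseteq[0,2k-2]$ with $a_{k-1}=2k-2$; you give no lower bound on $P$ in that branch. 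The extremal families (1) and (2) in fact lie in this branch as well as in the two-progression branch, so it cannot be discarded.
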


\begin{rem}\label{r2.1}
Let $A$ be as in Theorem \ref{T2}. If $|2^{\wedge}A|=3k-7$, then
$$2^{\wedge}A=\left([1, 2k-4]\backslash \{2, 2k-6\}\right)\cup \left(a_{k-1}+A\backslash\{a_{k-1}\}\right). $$
\end{rem}

\begin{thm}\label{T3}
Let $k\geqslant 3$. Let $A=\{0=a_{0}<a_{1}<\cdots<a_{k-1}=2k-3\}$.
Then $|2^{\wedge}A|=3k-7$ if and only if $k\geqslant 4$ and one of the following cases holds:
\begin{itemize}
  \item $A=[0, \theta-k+1]\cup [\theta, 2k-3]$, $k\leqslant \theta\leqslant 2k-4$;
  \item $A=\{2i, 2j-1: i\in [0, \theta], j\in[\theta+1, k-1]\}$, $1\leqslant \theta\leqslant k-3$;
  \item $A=\{3i, 3j-k: i\in [0, \theta], j\in[\theta+1, k-1]\}$, $\frac{k-3}{3}<\theta<\frac{2k-3}{3}$, $3\nmid k$;
  \item $A=\left\{0, 4i, 4i-3, 2k-3: 1\leqslant i\leqslant \frac{k-2}{2}\right\}$, $2\mid k$;
  \item $A=\left\{3i: 0\leqslant i\leqslant \frac{2k-3}{3}\right\}\cup\left\{\theta+3i: 0\leqslant i\leqslant \frac{k-3}{3}\right\}$, $1\leqslant\theta\leqslant k-1$, $3\nmid\theta$, $3\mid k$;
  \item $A=\{0, 1, 4, 5, 6, 9\}$, $\{0, 3, 4, 5, 8, 9\}$, $\{0, 1, 2, 5, 6, 7, 11\}$,

\qquad$\{0, 1, 3, 4, 7, 8, 11\}$, $\{0, 1, 4, 5, 6, 10, 11\}$, $\{0, 1, 4, 5, 7, 8, 11\}$,

\qquad$\{0, 1, 5, 6, 7, 10, 11\}$, $\{0, 3, 4, 6, 7, 10, 11\}$, $\{0, 3, 4, 7, 8, 10, 11\}$,

\qquad$\{0, 4, 5, 6, 9, 10, 11\}$, $\{0, 1, 2, 6, 7, 8, 12, 13\}$, $\{0, 1, 4, 5, 6, 9, 10, 13\}$,

\qquad$\{0, 1, 5, 6, 7, 8, 12, 13\}$, $\{0, 1, 5, 6, 7, 11, 12, 13\}$, $\{0, 2, 3, 5, 7, 8, 10, 13\}$,

\qquad$\{0, 2, 3, 5, 8, 10, 11, 13\}$, $\{0, 3, 4, 7, 8, 9, 12, 13\}$, $\{0, 3, 5, 6, 8, 10, 11, 13\}$,

\qquad$\{5i, 5i+\theta, 15: i=0, 1, 2\}\cup \{5-\theta, 10-\theta\}$, $1\leqslant \theta\leqslant 4$,

\qquad$\{5i, 5i+\theta, 15: i=0, 1, 2\}\cup \{10-\theta, 15-\theta\}$, $1\leqslant \theta\leqslant 4$,

\qquad$\{0, 1, 2, 6, 7, 8, 9, 14, 15\}$, $\{0, 1, 4, 5, 7, 8, 11, 12, 15\}$,

\qquad$\{0, 1, 6, 7, 8, 9, 13, 14, 15\}$, $\{0, 3, 4, 7, 8, 10, 11, 14, 15\}$,

\qquad$\{0, 1, 2, 7, 8, 9, 10, 15, 16, 17\}$, $\{0, 1, 5, 6, 7, 10, 11, 12, 16, 17\}$,

\qquad$\{0, 2, 3, 5, 7, 8, 10, 12, 15, 17\}$, $\{0, 2, 5, 7, 9, 10, 12, 14, 15, 17\}$,

\qquad$\{0, 3, 4, 6, 10, 11, 13, 14, 17\}$.
\end{itemize}
\end{thm}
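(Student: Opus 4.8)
\section*{Proof proposal for Theorem \ref{T3}}

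The statement is an exact ``if and only if'' characterization, so I would prove the two directions separately. For the sufficiency direction each displayed infinite family is an explicitly parametrized set --- a union of two arithmetic progressions with a common difference $d\in\{1,2,3,4,5\}$, possibly with one or two extra points --- and each sporadic set is given outright. For these I would compute $2^{\wedge}A$ by hand: write $A=P_{1}\cup P_{2}$ as its two progressions and assemble $2^{\wedge}A$ from the three pieces $2^{\wedge}P_{1}$, $P_{1}+P_{2}$, $2^{\wedge}P_{2}$, each of which is an arithmetic progression, keeping careful track of which even ``doubling'' values $2a_{i}$ fall into the gaps between these pieces. As a sample check for the family $A=\{2i,2j-1\}$ shows, $2^{\wedge}A$ splits into two interlaced progressions that miss exactly four doubles $\{0,4\theta,4\theta+2,4k-6\}$, whence $|2^{\wedge}A|=(2k-6)+(k-1)=3k-7$; the other families behave the same way, and the finitely many sporadic sets are verified one at a time.

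For necessity, assume $|2^{\wedge}A|=3k-7$ with $a_{k-1}=2k-3$. First I would record two structural facts. Since the $k$ elements of $A$ lie in $[0,2k-3]$ and $2k-3<2(k-1)$, any common divisor $d\geq 2$ would compress $A$ into $[0,(2k-3)/d]$, which contains fewer than $k$ integers; hence $\gcd(A)=1$ is automatic, and Theorem B with $l=2k-3$ gives $|2A|\geq 3k-3$. Second, setting $D=\{2a_{i}:2a_{i}\notin 2^{\wedge}A\}$ for the set of uniquely represented doubles, one has the exact identity $|2A|=|2^{\wedge}A|+|D|=3k-7+|D|$; since $0=2a_{0}$ and $4k-6=2a_{k-1}$ always lie in $D$ and Theorem B forces $|2A|\geq 3k-3$, we get $|D|\geq 4$. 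The heart of the argument is then to show that for all sufficiently large $k$ one must have $|D|=4$ exactly, i.e. $|2A|=3k-3$: I would prove that a third or fourth \emph{interior} uniquely represented double forces $A$ to be spread out enough that $2^{\wedge}A$ acquires strictly more than $3k-7$ elements, so this can happen only when $k$ is bounded.

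Once $|2A|=3k-3$ is secured and $k>6$, Theorem D applies: $A$ is a subset of an arithmetic progression of length at most $2k-1$, or a union of two arithmetic progressions with the same common difference. Because $\gcd(A)=1$ and $0,2k-3\in A$, the first alternative forces the common difference of that progression to be $1$ and so merely says $A\subseteq[0,2k-3]$, which is not by itself restrictive; the real content therefore comes from the second alternative. Writing $A=P_{1}\cup P_{2}$ with common difference $d$, a density/counting argument (the two progressions must together fill about half of $[0,2k-3]$) bounds $d\leq 5$, and for each $d\in\{1,2,3,4,5\}$ and each admissible choice of the two lengths and offsets I would compute $2^{\wedge}A$ exactly, as in the sufficiency direction, and solve $|2^{\wedge}A|=3k-7$ for the parameters. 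This recovers precisely the five infinite families listed in the statement.

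It remains to treat the bounded range of $k$: those $k$ for which $|2A|>3k-3$ survives, those $k\leq 6$ where Theorem D is unavailable, and the configurations that fall only under the (vacuous) first alternative of Theorem D. I would show the relevant threshold is $k\leq 10$ and then enumerate the finitely many admissible sets by hand or machine, which produces exactly the sporadic list. I expect the main obstacle to be twofold. The first difficulty is the uniform bound ruling out $|2A|>3k-3$ (equivalently, showing at most two interior doubles can be uniquely represented) for large $k$, precisely because this is where the sporadic examples such as $\{0,1,4,5,6,9\}$, with $|2A|=3k-2$, actually live; the first alternative of Theorem D being non-restrictive means the entire weight of the argument rests on either proving that $A$ is a genuine union of two progressions or forcing $k$ small. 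The second difficulty is simply the volume of casework: five common differences, each with sub-cases according to which block is longer and where the shorter block sits, together with the finite enumeration for $6\leq k\leq 10$. Organizing this so that no admissible $A$ is missed and none double-counted is where the real labor lies.
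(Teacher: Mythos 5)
Your necessity argument follows a genuinely different route from the paper's (the paper never passes through $|2A|$ at all: it works with the set $W=\{w: \{w,w+a_{k-1}\}\cap 2^{\wedge}A=\emptyset\}$, invokes $|W|\leqslant 2$ and the structure lemma $A=\{a_{k-1}\}\cup(U+H)\cup\mathcal{D}^{-}(v)$ from the authors' earlier work, rules out $m=\gcd(w_2-w_1,a_{k-1})\geqslant 5$, and then reads off the families from the cases $m=1$ and $m=3$), but your route has a gap that I do not think can be repaired as described. The pivot of your plan is Theorem D, and Theorem D is a disjunction: $A$ is contained in an arithmetic progression of at most $2k-1$ terms \emph{or} $A$ is a union of two arithmetic progressions with a common difference. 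Since here $A\subseteq[0,2k-3]$, which is an arithmetic progression of $2k-2\leqslant 2k-1$ terms, the first disjunct holds for \emph{every} $A$ under consideration, so the theorem never forces the second disjunct and yields no structural information whatsoever. You notice this ("the first alternative \ldots is not by itself restrictive; the real content therefore comes from the second alternative") but then proceed as if the second alternative were available; that inference is invalid, and "proving that $A$ is a genuine union of two progressions" is essentially the whole theorem, for which you supply no mechanism.

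The other load-bearing step is also unproved: you assert that for $k$ large enough one must have $|D|=4$, i.e.\ $|2A|=3k-3$, because "a third or fourth interior uniquely represented double forces $A$ to be spread out enough that $2^{\wedge}A$ acquires strictly more than $3k-7$ elements." No argument is given, and this is precisely the delicate regime where the sporadic sets (all with $|2A|>3k-3$) live, so the claim cannot be dispatched by a soft density remark; one would need a quantitative argument, and none is sketched. (Your preliminary observations are fine: $\gcd(A)=1$ is indeed automatic, $|2A|=|2^{\wedge}A|+|D|$ with $0,4k-6\in D$ is correct, and Theorem B does give $|D|\geqslant 4$.) The sufficiency direction as you describe it is routine and matches what the paper implicitly does, but as it stands the necessity half of your proposal reduces the problem to two statements that are each comparable in difficulty to the theorem itself.
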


\begin{rem}\label{r1}
Let $A$ be as in Theorem \ref{T3} such that $|2^{\wedge}A|=3k-7$.
If $a_{k-3}<2k-6$, $a_{k-2}=2k-4$ and $a_{k-1}=2k-3$, then one of the following cases holds:

\begin{itemize}
  \item $A=[0, k-3]\cup \{2k-4, 2k-3\}$, $k\geqslant 4$;

  \item $A=\{0, 3, 4, 5, 8, 9\}$, $\{0, 1, 4, 5, 6, 10, 11\}$, $\{0, 1, 5, 6, 7, 10, 11\}$,

\qquad$\{0, 3, 4, 6, 7, 10, 11\}$, $\{0, 1, 2, 6, 7, 8, 12, 13\}$, $\{0, 1, 5, 6, 7, 8, 12, 13\}$,

\qquad$\{0, 3, 4, 7, 8, 9, 12, 13\}$, $\{0, 1, 2, 6, 7, 8, 9, 14, 15\}$,

\qquad$\{0, 1, 4, 5, 6, 9, 10, 14, 15\}$, $\{0, 1, 5, 6, 9, 10, 11, 14, 15\}$,

\qquad$\{0, 4, 5, 6, 9, 10, 11, 14, 15\}$, $\{0, 3, 4, 7, 8, 10, 11, 14, 15\}$,

\qquad$\{0, 1, 5, 6, 7, 10, 11, 12, 16, 17\}$.
\end{itemize}
\end{rem}

\begin{rem}\label{r2}
Let $A$ be as in Theorem \ref{T3} such that $|2^{\wedge}A|=3k-7$.
If $a_{k-3}=2k-6$, $a_{k-2}=2k-5$ and $a_{k-1}=2k-3$, then $a_{k-4}=2k-8$.
\end{rem}

From the results mentioned above, it can be observed that under some certain specific conditions,
$|2^{\wedge}A|=3k-7$ seems to suggest that the structure of $A$ is a union of arithmetic progressions with the same common difference.
This naturally leads us to reflect whether there is something analogous to Freiman's work for restricted sumsets?

\begin{que}
Let $A$ be as in Theorem \ref{T1}. When $|2^{\wedge}A|=3k-7$, is $A$ a union of arithmetic progressions with the same common difference?
\end{que}

Our key tool is to split the set $A$ into two parts, each meeting the conditions required by Theorems \ref{T2} and \ref{T3}.
For the sake of completeness we present the proofs of them in Sections \ref{S5} and \ref{S6}.
In Section \ref{S7}, let us see how to successively apply Theorems \ref{T2} and \ref{T3} to prove Theorem \ref{T1}.

\section{Preliminaries of Theorem \ref{T2}}\label{S2}
Throughout this section, let $A=\left\{0=a_{0}<a_{1}<\cdots<a_{k-1}\right\}$ be a set of integers such that
$$a_{i}<2i, \ \ i=1, \ldots, k-2, \ \ a_{k-1}\geqslant 2k-2, \ \ \gcd(A)=1. $$
Write $A'=A\backslash\{a_{k-1}\}$, $b_{0}=0$ and
$$B=[1, 2k-4]\backslash 2^{\wedge}A':=\{b_{1}<b_{2}<\cdots<b_{m}\}. $$

If $m\leqslant 1$, then
$$|2^{\wedge}A|\geqslant |[1, 2k-4]\backslash B|+|a_{k-1}+A'|\geqslant 3k-6. $$
Now, we consider that $m\geqslant 2$.

To count $|2^{\wedge}A|$, we shall consider the following mutually disjoint parts:
$$[1, 2k-4]\cap 2^{\wedge}A', \ \ 2k-4+[1, b_{m-1}], $$
$$\{2k-3+b_{m-1}, 2k-2+b_{m-1}\}, \ \ a_{k-1}+([b_{m-1}+1, a_{k-2}]\cap A'). $$
Except for finitely exceptions, we shall show that
$$|2k-4+[1,b_{m-1}]\cap 2^{\wedge}A'|\geq \frac{b_{m-1}}{2}+\left\lfloor\frac{b_{m-1}}{4}\right\rfloor,$$
$$|\{2k-3+b_{m-1}, 2k-2+b_{m-1}\}\cap 2^{\wedge}A'|\geq 1.$$

To increase the readability of the paper, we put the proof of Lemmas \ref{P1}-\ref{P3} and Proposition \ref{P4} in the appendix.

\begin{lem}\label{L2-1}(\cite{Nathanson}, Theorem 1.14)
$[0, 2k-4]\subseteq 2A'$.
\end{lem}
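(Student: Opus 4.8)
The plan is to prove the statement elementwise and self-containedly, rather than quote the cited theorem as a black box: for every integer $n\in[0,2k-4]$ I would show $n\in 2A'$ by a reflection-and-counting argument. The key reduction is that $n\in 2A'$ holds as soon as the two sets $A'$ and $n-A'=\{\,n-a:a\in A'\,\}$ share a common element, since a point $x=a_s=n-a_t$ yields $n=a_s+a_t$ with $a_s,a_t\in A'$. Only the overlap inside the window $[0,n]$ can matter, so I would restrict attention to $S_n:=A'\cap[0,n]$ and note that $(n-A')\cap[0,n]=n-S_n$. It then suffices to show that the two equal-sized subsets $S_n$ and $n-S_n$ of the $(n+1)$-element interval $[0,n]$ cannot be disjoint.

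Here the growth hypothesis $a_i<2i$ enters. Writing $r=|S_n|$, the reflected copy $n-S_n$ also has $r$ elements, so pigeonhole forces the two copies to meet whenever $2r\ge n+2$. To bound $r$ from below I would count how many $a_i$ lie in $[0,n]$: since $a_i<2i$ gives $a_i\le 2i-1$ for $1\le i\le k-2$ (and $a_0=0$), every index $i$ with $2i-1\le n$, i.e. $i\le (n+1)/2$, contributes an element $a_i\le n$ to $S_n$. Provided these indices are all admissible (that is, $\lfloor(n+1)/2\rfloor\le k-2$), this yields
$$r\ \ge\ \left\lfloor\frac{n+1}{2}\right\rfloor+1,\qquad\text{so}\qquad 2r\ \ge\ 2\left\lfloor\frac{n+1}{2}\right\rfloor+2\ \ge\ n+2,$$
where the final inequality is a one-line parity check on $n$. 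This is exactly what the pigeonhole needs, whence $S_n\cap(n-S_n)\neq\emptyset$ and $n\in 2A'$.

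The delicate point, and the place I expect to argue most carefully, is the behaviour at the right endpoint $n=2k-4$, where both the index count and the largest term $a_{k-2}$ are stretched to their limits. There $\lfloor(n+1)/2\rfloor=\lfloor(2k-3)/2\rfloor=k-2$, so the counted indices run exactly up to $k-2$ and remain admissible; moreover $A'\subseteq[0,a_{k-2}]\subseteq[0,2k-5]\subseteq[0,2k-4]$, so in fact $S_{2k-4}=A'$ has all $k-1$ elements and $2r=2k-2>2k-3=|[0,2k-4]|$, again forcing an intersection. Verifying that the admissibility bound $\lfloor(n+1)/2\rfloor\le k-2$ holds throughout $0\le n\le 2k-4$, together with the parity step in the displayed inequality, completes the proof; note that the strictness of $a_i<2i$ is essential, since with only $a_i\le 2i$ the count would fail for odd $n$.
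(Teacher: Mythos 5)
Your proof is correct. The paper itself offers no argument for this lemma --- it simply cites Nathanson's Theorem~1.14 --- so there is nothing internal to compare against; your reflection-and-pigeonhole argument is the standard proof of that cited result, and every step checks out. In particular, the count $|S_n|\geqslant\lfloor(n+1)/2\rfloor+1$ is valid because $a_i<2i$ gives $a_i\leqslant 2i-1\leqslant n$ for all $i\leqslant\lfloor(n+1)/2\rfloor$, the admissibility bound $\lfloor(n+1)/2\rfloor\leqslant\lfloor(2k-3)/2\rfloor=k-2$ holds throughout $0\leqslant n\leqslant 2k-4$, and the parity inequality $2\lfloor(n+1)/2\rfloor+2\geqslant n+2$ forces $S_n$ and $n-S_n$ to meet inside the $(n+1)$-element interval $[0,n]$; since $2A'$ is the unrestricted sumset, a common point $a_s=n-a_t$ with $s=t$ causes no trouble. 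Your closing observations --- that the endpoint $n=2k-4$ still works because $S_{2k-4}=A'$ has $k-1$ elements, and that the strict inequality $a_i<2i$ is genuinely needed --- are both accurate and worth keeping.
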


\begin{lem}\label{L2-2}
For any integer $b\in B$, we have

(i) $b\not\in A'$, $2\mid b$, $\frac{b}{2}\in A'$;

(ii) $\left|[0, b]\cap A'\right|=\frac{b}{2}+1$. Moreover, $\left|\left\{i, b-i\right\}\cap A'\right|=1$ for all $i=0, 1, \ldots, \frac{b}{2}$;

(iii) Let $u$, $b$ be integers such that $b<k-2$ and $1\leqslant u\leqslant b$. If $2k-4+u\not\in 2^{\wedge}A'$, then
$$|[b+1, 2k-5+u-b]\cap A'|=k-2+\left\lfloor\frac{u}{2}\right\rfloor-b, $$
$$|\{i, 2k-4+u-i\}\cap A'|=1, \ \ i=b+1, \ldots, k-2+\left\lfloor \frac{u}{2}\right\rfloor. $$

(iv) If $b<2k-4$, then $a_{\frac{b}{2}+1}=b+1\in A'$.
\end{lem}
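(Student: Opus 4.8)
The plan is to establish the four assertions in turn, each time playing the ``restricted-sum'' hypothesis (either $b\in B$ or $2k-4+u\notin 2^{\wedge}A'$) against the density hypothesis $a_i<2i$. For (i) I would start from Lemma \ref{L2-1}, which gives $b\in[0,2k-4]\subseteq 2A'$, so $b=x+y$ for some $x,y\in A'$. Since $b\in B$ means $b\notin 2^{\wedge}A'$, no such representation can use two distinct elements; hence the only representation is $b=2\cdot\frac b2$ with $\frac b2\in A'$, which forces $2\mid b$ and $\frac b2\in A'$. Finally $b\notin A'$, for otherwise $b=a_0+b=0+b$ would be a sum of two distinct elements of $A'$ (as $b\geq 1$), putting $b\in 2^{\wedge}A'$.

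For (ii) the engine is the reflection $i\mapsto b-i$ on $[0,b]$. If both $i$ and $b-i$ belonged to $A'$ with $i\neq b-i$ then $b\in 2^{\wedge}A'$; so each pair $\{i,b-i\}$ with $0\le i<\frac b2$ contains at most one element of $A'$, and together with $\frac b2\in A'$ from (i) this gives $|[0,b]\cap A'|\le\frac b2+1$. The matching lower bound is exactly where the density hypothesis enters: since $\frac b2\le k-2$ we may use $a_{b/2}<2\cdot\frac b2=b$, so $a_0<\cdots<a_{b/2}$ all lie in $[0,b]$ and $|[0,b]\cap A'|\ge\frac b2+1$. Equality forces exactly one of $i,b-i$ to lie in $A'$ for every $i$, which is (ii). Statement (iv) then drops out: by (ii) the elements of $A'$ in $[0,b]$ are precisely $a_0,\dots,a_{b/2}$, so $a_{b/2+1}\ge b+1$, while $a_{b/2+1}<2(\frac b2+1)=b+2$ (legitimate because $b<2k-4$ makes $\frac b2+1\le k-2$), giving $a_{b/2+1}=b+1$.

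The real work is (iii). Writing $c=2k-4+u$, I would split the window $[0,2k-5]$ (which contains all of $A'$, since $a_{k-2}<2k-4$) into three consecutive blocks $[0,b]$, $I:=[b+1,\,2k-5+u-b]$, and $J:=[2k-4+u-b,\,2k-5]$, so that $|I\cap A'|=(k-1)-|[0,b]\cap A'|-|J\cap A'|$. The block $I$ is symmetric about $c/2$, and because $c\notin 2^{\wedge}A'$ the pairing $i\mapsto c-i$ shows that each of its $k-2+\lfloor u/2\rfloor-b$ symmetric slots (pairs, plus the centre when $u$ is even) holds at most one element of $A'$; this yields the upper bound $|I\cap A'|\le k-2+\lfloor u/2\rfloor-b$. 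It remains to prove the matching lower bound, for which I would insert $|[0,b]\cap A'|=\frac b2+1$ from (ii) together with a sharp upper estimate for $|J\cap A'|$.

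The step I expect to be the crux is this last estimate, and the key point is that $|J\cap A'|$ counts \emph{large} elements of $A'$, which are governed by the very same density hypothesis: if $a_i\ge t$ then $a_i<2i$ forces $i>t/2$, whence $|[t,2k-5]\cap A'|\le k-2-\lfloor t/2\rfloor$; taking $t=2k-4+u-b$ this reads $|J\cap A'|\le\frac b2-\lfloor u/2\rfloor$. Substituting both estimates gives $|I\cap A'|\ge(k-1)-(\frac b2+1)-(\frac b2-\lfloor u/2\rfloor)=k-2+\lfloor u/2\rfloor-b$, matching the reflection upper bound, and equality then forces every symmetric slot of $I$ to contain exactly one element of $A'$, which is the second conclusion of (iii). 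The main obstacle is precisely to see that $|J\cap A'|$ should be controlled directly by the density hypothesis rather than by the reflection about $c$ — the reflection only pins down the symmetric interior $I$ and leaves the outermost (large-value) slots free — after which the two bounds meet and the count closes exactly.
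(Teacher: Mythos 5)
Your proof is correct and, for parts (i), (ii) and (iv), essentially identical to the paper's own argument: the reflection $i\mapsto b-i$ gives the upper bound $\frac{b}{2}+1$, the density hypothesis $a_{b/2}<b$ gives the matching lower bound, and (iv) follows from $a_{\frac{b}{2}+1}<b+2$ together with $a_{\frac{b}{2}+1}\notin[0,b]$. For part (iii), which the paper omits with only the remark that it is ``similar to (ii)'', your completion is correct: the reflection about $c=2k-4+u$ bounds $|[b+1,2k-5+u-b]\cap A'|$ above by the number of symmetric slots, and the complementary count --- $|A'|=k-1$ with $A'\subseteq[0,2k-5]$, $|[0,b]\cap A'|=\frac{b}{2}+1$ from (ii), and the density estimate $|[2k-4+u-b,2k-5]\cap A'|\leqslant \frac{b}{2}-\lfloor u/2\rfloor$ (valid since $b$ is even, so $\lfloor(2k-4+u-b)/2\rfloor=k-2-\frac{b}{2}+\lfloor u/2\rfloor$) --- yields the matching lower bound, forcing exactly one element of $A'$ in each slot.
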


\begin{proof}
(i) If $b\in A'$, then by $0\in A'$, we have $b\in 2^{\wedge}A'$, which is impossible. Thus $b\not\in A'$.
By Lemma \ref{L2-1}, we have $B\subseteq 2A'\backslash 2^{\wedge}A'$, thus $2\mid b$ and $\frac{b}{2}\in A'$.

(ii) Since $b\not\in 2^{\wedge}A'$, we have $\left|\left\{i, b-i\right\}\cap A'\right|\leqslant 1$ for all $i=0, 1, \ldots, \frac{b}{2}$,
thus $$\left|[0, b]\cap A'\right|\leqslant\frac{b}{2}+1. $$
Since $b\leqslant 2k-4$ and $a_{i}<2i$ for all $i=1, \ldots, k-2$, we have $a_{\frac{b}{2}}<b$, thus
$$\left|[0, b]\cap A'\right|\geqslant\frac{b}{2}+1. $$
Hence
$$\left|[0, b]\cap A'\right|=\frac{b}{2}+1. $$
It follows that
$$\left|\left\{i, b-i\right\}\cap A'\right|=1, \ \ i=0, 1, \ldots, \frac{b}{2}. $$

(iii) The proof is similar to (2). Here is omitted.

(iv) If $b<2k-4$, then $\frac{b}{2}+1\leqslant k-2$.
Since
$$a_{\frac{b}{2}+1}<b+2, \ \ a_{\frac{b}{2}+1}\not\in [0, b], $$
we have $a_{\frac{b}{2}+1}=b+1$.

This completes the proof of Lemma \ref{L2-2}.
\end{proof}

\begin{lem}\label{L2-3}
For any $i\in \{0, \ldots, m-1\}$, we have $b_{i+1}\geqslant 2b_{i}+2$.
\end{lem}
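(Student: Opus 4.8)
The plan is to reduce the inequality to a short double‑counting contradiction. The case $i=0$ is immediate: since $b_{1}\in B$, Lemma \ref{L2-2}(i) shows $b_{1}$ is even, and $b_{1}\geqslant 1$ forces $b_{1}\geqslant 2=2b_{0}+2$. So I would fix $1\leqslant i\leqslant m-1$ and argue by contradiction, assuming $b_{i+1}\leqslant 2b_{i}+1$. Writing $c=b_{i+1}$ and $d=c-b_{i}$, both $b_{i}$ and $c$ are even by Lemma \ref{L2-2}(i), so $d$ is even; together with $b_{i}<c\leqslant 2b_{i}+1$ this gives $2\leqslant d\leqslant b_{i}$.

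The heart of the argument is to count $\left|A'\cap[0,d-1]\right|$ in two different ways. For the first count I would use the density hypothesis $a_{j}<2j$. Since $d\leqslant b_{i}\leqslant 2k-4$ we have $d/2\leqslant k-2$, so for every $1\leqslant j\leqslant d/2$ the hypothesis gives $a_{j}\leqslant 2j-1\leqslant d-1$; together with $a_{0}=0$ this shows that $a_{0},a_{1},\ldots,a_{d/2}$ all lie in $[0,d-1]$, whence $\left|A'\cap[0,d-1]\right|\geqslant \tfrac{d}{2}+1$.

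For the second count I would exploit the reflection structure coming from Lemma \ref{L2-2}(ii) applied to $c\in B$ and to $b_{i}\in B$. Subtracting the two cardinalities $\left|A'\cap[0,c]\right|=c/2+1$ and $\left|A'\cap[0,b_{i}]\right|=b_{i}/2+1$ gives $\left|A'\cap[b_{i}+1,c]\right|=d/2$. Because the involution $j\mapsto c-j$ exchanges $[0,d-1]$ with $[b_{i}+1,c]$ and selects exactly one element of $A'$ from each pair $\{j,c-j\}$, summing over these $d$ disjoint pairs yields $\left|A'\cap[0,d-1]\right|+\left|A'\cap[b_{i}+1,c]\right|=d$, and hence $\left|A'\cap[0,d-1]\right|=d/2$. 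This contradicts the lower bound $\tfrac{d}{2}+1$ from the first count, so $b_{i+1}\geqslant 2b_{i}+2$.

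I expect the main step to be the conceptual one of recognizing $[0,d-1]$ as the right interval to count two ways, pitting the density lower bound against the reflection‑based exact value; after that the only care needed is bookkeeping the range conditions that license each invoked fact — that $d/2\leqslant k-2$, so the hypothesis $a_{j}<2j$ covers the index $j=d/2$; that $d-1\leqslant c/2$ (which follows from $d\leqslant b_{i}$), so that Lemma \ref{L2-2}(ii) applies to every $j\in[0,d-1]$; and that $[0,d-1]$ and $[b_{i}+1,c]$ are disjoint, so the $d$ reflected pairs are counted without overlap. Once these are in place the contradiction is immediate and, pleasantly, it subsumes the extreme case $c=2b_{i}$ uniformly with the rest.
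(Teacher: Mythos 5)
Your proof is correct, and it takes a genuinely different and substantially shorter route than the paper's. The paper first disposes of $b_{i+1}=2b_i$ via the quick observation that $b_{i+1}/2\in A'$ while $b_i\notin A'$, and then, assuming $b_{i+1}\leqslant 2b_i-2$, runs an iterated descent: it alternates the reflections $n\mapsto b_i-n$ and $n\mapsto b_{i+1}-n$ to propagate membership in $A'$ modulo $b_{i+1}-b_i$, invokes the division algorithm $b_i=s(b_{i+1}-b_i)+\overline{b_i}$ with a parity case split on $s$, and finally exhibits a short initial segment ($[0,\overline{b_i}]$ or $[0,\overline{b_i}+(b_{i+1}-b_i)]$) meeting $A'$ in only half its length, contradicting $a_j<2j$. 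You reach the same kind of contradiction in a single step: with $d=b_{i+1}-b_i$, one application of Lemma \ref{L2-2}(ii) to $c=b_{i+1}$ pairs $[0,d-1]$ bijectively with $[b_i+1,c]$, and the two cardinality formulas $|[0,b]\cap A'|=b/2+1$ for $b=b_i,b_{i+1}$ pin $|A'\cap[b_i+1,c]|$ at $d/2$, forcing $|A'\cap[0,d-1]|=d/2$ against the density lower bound $d/2+1$; I checked the range conditions you flag ($d$ even, $d\leqslant b_i$, $d-1\leqslant c/2$, $d/2\leqslant k-2$, disjointness of the two intervals) and they all hold, including at the extreme $b_{i+1}=2b_i$, which your argument absorbs uniformly rather than treating separately. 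Both proofs ultimately pit the hypothesis $a_j<2j$ against the reflection structure of Lemma \ref{L2-2}(ii); yours buys brevity and transparency, while the paper's longer descent yields (as a by-product) more detailed congruence information about $A'$ on $[0,b_i]$ that is not, however, reused for this lemma.
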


\begin{proof}
Since $a_{1}<2$, we have $a_{1}=1$, thus
$$b_{1}\geqslant 2=2b_{0}+2. $$

For any integer $i\in \{1, \ldots, m-1\}$, by Lemma \ref{L2-2} (i),
we have $b_{i}\not\in A'$, $\frac{b_{i+1}}{2}\in A'$, thus $b_{i+1}\neq 2b_{i}$.
Suppose, for a contradiction, that there exists an integer $i\in \{1, \ldots, m-1\}$ such that $b_{i+1}\leqslant 2b_{i}-2$.

Noting that $\frac{b_{i+1}}{2}\in [b_{i+1}-b_{i}, b_{i}]$, assume that $q$ is a positive integer such that
$$q(b_{i+1}-b_{i})\leqslant \frac{b_{i+1}}{2}<(q+1)(b_{i+1}-b_{i}). $$
Choosing $b:=b_i\rightarrow b:=b_{i+1}\rightarrow b:=b_i\rightarrow \cdots\rightarrow b:=b_i$, when $b:=b_i$ occurs $q$ times, the above process terminates.
By Lemma \ref{L2-2} (ii) we obtain
\begin{eqnarray}\label{e2.1}\nonumber
\frac{b_{i+1}}{2}\in A' &\Longleftrightarrow& b_{i}-\frac{b_{i+1}}{2}=\frac{b_{i+1}}{2}-(b_{i+1}-b_{i})\not\in A'\\ \nonumber
&\Longleftrightarrow& b_{i+1}-\left(b_{i}-\frac{b_{i+1}}{2}\right)\in A'\\
&\Longleftrightarrow& \frac{b_{i+1}}{2}-2(b_{i+1}-b_{i})\not\in A' \\ \nonumber
&\cdots&\\ \nonumber
&\Longleftrightarrow& \frac{b_{i+1}}{2}-q(b_{i+1}-b_{i})\not\in A'. \nonumber\end{eqnarray}

For any integer $n\in [b_{i+1}-b_{i}, b_{i}]$ with $n\not\equiv \frac{b_{i+1}}{2}\pmod{b_{i+1}-b_{i}}$, there exists a positive integer $q_{n}$ such that
$$q_{n}(b_{i+1}-b_{i})\leqslant n<(q_{n}+1)(b_{i+1}-b_{i}). $$
Repeat the procedure ``$b:=b_{i+1}\rightarrow b:=b_{i}$" $q_n$ times, by Lemma \ref{L2-2} (ii) we obtain
\begin{eqnarray}\label{e2.2}\nonumber n\in A' &\Longleftrightarrow& b_{i+1}-n\not\in A'\\ &\Longleftrightarrow &n-(b_{i+1}-b_{i})=b_{i}-(b_{i+1}-n)\in A' \\
  &\cdots& \nonumber \\ &\Longleftrightarrow & n-q_{n}(b_{i+1}-b_{i})\in A'.\nonumber\end{eqnarray}

By the division algorithm, there exist integers $s$ and $\overline{b_{i}}$ such that
\begin{equation}\label{e2.3}b_{i}=s(b_{i+1}-b_{i})+\overline{b_{i}}, \ \ 0\leqslant \overline{b_{i}}<b_{i+1}-b_{i}. \end{equation}

If $s$ is odd, then
$$\frac{\overline{b_{i}}}{2}=\frac{b_{i}}{2}-\frac{s+1}{2}(b_{i+1}-b_{i})+\frac{b_{i+1}-b_{i}}{2}=\frac{b_{i+1}}{2}-\frac{s+1}{2}(b_{i+1}-b_{i}). $$
Since $\frac{b_{i+1}}{2}\in A'$, by (\ref{e2.1}) we have $\frac{\overline{b_{i}}}{2}\not\in A'$.

For $j=0, \ldots, \frac{\overline{b_{i}}}{2}-1$, we have
\begin{equation}\label{e2.4}\overline{b_{i}}-j\not\equiv \frac{b_{i+1}}{2}\pmod{b_{i+1}-b_{i}}. \end{equation}
By (\ref{e2.3}) we have
\begin{equation}\label{e2.5}b_{i}-j=\overline{b_{i}}-j+s(b_{i+1}-b_{i}).\end{equation}
By (\ref{e2.2}), (\ref{e2.4})-(\ref{e2.5}), we know that $\overline{b_{i}}-j, b_{i}-j\in A',$ or $\overline{b_{i}}-j, b_{i}-j\not\in A'.$
Since $j<\frac{\overline{b_{i}}}{2}<\frac{b_i}{2}$, by Lemma \ref{L2-2} (ii) we have $|\{j, b_i-j\}\cap A'|=1$. Thus
$$|\{j, \overline{b_{i}}-j\}\cap A'|=1, \ \ j=0, \ldots, \frac{\overline{b_{i}}}{2}-1. $$
That is, $$\left|\left[0, \overline{b_{i}}\right]\cap A'\right|=\frac{\overline{b_{i}}}{2}.$$
Hence, $a_{\frac{\overline{b_{i}}}{2}}>\overline{b_{i}}$, a contradiction.

If $s$ is even, then
$$\frac{\overline{b_{i}}+(b_{i+1}-b_{i})}{2}=\frac{b_{i+1}}{2}-\frac{s}{2}(b_{i+1}-b_{i}). $$
Since $\frac{b_{i+1}}{2}\in A'$, by (\ref{e2.1}) we have $\frac{\overline{b_{i}}+(b_{i+1}-b_{i})}{2}\not\in A'$.

Similar to the above discuss, we have
$$\left|\left[0, \overline{b_{i}}+(b_{i+1}-b_{i})\right]\cap A'\right|=\frac{\overline{b_{i}}+(b_{i+1}-b_{i})}{2}, $$
thus $a_{\frac{\overline{b_{i}}}{2}+\frac{b_{i+1}-b_{i}}{2}}>\overline{b_{i}}+(b_{i+1}-b_{i})$, a contradiction.

Hence, $b_{i+1}\geqslant 2b_{i}+2$.

This completes the proof of Lemma \ref{L2-3}.
\end{proof}

\begin{lem}\label{P1}
Let $m\geqslant 2$. Then there are no consecutive integers in $[2k-3, 2k-4+b_{m-1}]\backslash 2^{\wedge}A'$ unless $m=2$ and
$$[0, b_{2}]\cap A'=\left[0,\frac{b_{1}}{2}\right]\cup \left[b_{1}+1, \frac{3b_{1}}{2}+1\right]. $$
In this case, we have $|2^{\wedge}A|\geqslant 3k-6$.
\end{lem}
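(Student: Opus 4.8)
The plan is to prove the stated dichotomy by assuming the failure of the first alternative and forcing the exceptional configuration. So suppose $m\geq 2$ and that there is an integer $u$ with $1\leq u\leq b_{m-1}-1$ for which both $2k-4+u$ and $2k-4+(u+1)$ lie outside $2^{\wedge}A'$; I must show that $m=2$ together with the block description of $[0,b_{2}]\cap A'$ holds, after which $|2^{\wedge}A|\geq 3k-6$ will follow by a direct count. First, Lemma~\ref{L2-3} with $i=m-1$ gives $b_{m}\geq 2b_{m-1}+2$, so $b_{m}\leq 2k-4$ forces $b_{m-1}\leq k-3<k-2$; hence Lemma~\ref{L2-2}(iii) applies with $b:=b_{m-1}$ to both parameters $u$ and $u+1$, since $u+1\leq b_{m-1}$. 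Writing $\sigma(x)=(2k-4+u)-x$ and $\tau(x)=(2k-3+u)-x=\sigma(x)+1$, part (iii) says that on its valid range each of $\sigma,\tau$ puts exactly one member of every complementary pair into $A'$. As $\tau\circ\sigma$ is the shift $x\mapsto x+1$, chaining the two membership-exchanges yields $x\in A'\Rightarrow x+1\in A'$ on the overlap interval, and since $b_{m-1}+1\in A'$ by Lemma~\ref{L2-2}(iv), upward closure from the bottom endpoint gives
$$[b_{m-1}+1,\;k-2+\lfloor u/2\rfloor]\subseteq A'. $$

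Next I feed this forced run of consecutive integers back into the rigidity of $B$. Every $b\in B$ obeys $b\notin A'$, $2\mid b$ and $b/2\in A'$ (Lemma~\ref{L2-2}(i)), and consecutive gaps grow by Lemma~\ref{L2-3}. Because $b_{m}\notin A'$ while the run lies in $A'$, necessarily $b_{m}>k-2+\lfloor u/2\rfloor$; coupling this with $b_{m}\leq 2k-4$, $b_{m}/2\in A'$, $b_{m}\geq 2b_{m-1}+2$ and the description of $A'\cap[0,b_{m-1}]$ from Lemma~\ref{L2-2}(ii), a short case analysis on the parity of $u$ and the size of $b_{m-1}$ fixes $u$ and shows that no intermediate gap $b_{1},\dots,b_{m-2}$ can coexist with the run. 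This collapses the picture to $m=2$ and, on unwinding Lemma~\ref{L2-2}(ii),(iv), to exactly
$$[0,b_{2}]\cap A'=\left[0,\tfrac{b_{1}}{2}\right]\cup\left[b_{1}+1,\tfrac{3b_{1}}{2}+1\right]. $$

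In this exceptional configuration the count is direct. With $m=2$, the definition of $B$ gives $|[1,2k-4]\cap 2^{\wedge}A'|=(2k-4)-2=2k-6$, while the $k-1$ translates $a_{k-1}+A'$ are pairwise distinct and, since $a_{k-1}\geq 2k-2$, all exceed $2k-4$ and so are disjoint from the first block; this already yields $|2^{\wedge}A|\geq 3k-7$. The explicit two-block shape of $A'$ then produces at least one further element of $2^{\wedge}A'$ lying above $2k-4$ and not among the translates, which upgrades the estimate to $|2^{\wedge}A|\geq 3k-6$, as required.

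The main obstacle I anticipate is the middle step. The ladder and the concluding count are essentially mechanical, but controlling precisely where the valid ranges of $\sigma$ and $\tau$ terminate, and then ruling out the many a priori admissible positions of $b_{m}$ (and of any putative $b_{m-2}$) against the forced run and the constraints $2\mid b$, $b/2\in A'$, $b_{i+1}\geq 2b_{i}+2$, is where the bookkeeping concentrates; it is exactly this boundary analysis that isolates $m=2$ and the sharp endpoints $b_{1}/2$ and $3b_{1}/2+1$.
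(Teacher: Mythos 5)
Your opening step is sound and matches the paper: writing the two excluded values as $2k-5+c$ and $2k-4+c$ (your $u=c-1$), chaining the pair conditions of Lemma~\ref{L2-2}(iii) for $b=b_{m-1}$ does yield $i\in A'\Leftrightarrow i+1\in A'$ on the relevant range, and with $b_{m-1}+1\in A'$ this forces the run $[b_{m-1}+1,\,k-2+\lfloor u/2\rfloor]\subseteq A'$ (indeed, when $c$ is even the degenerate middle pair already gives an outright contradiction, so only odd $c$ survives). The closing count is also essentially right, although ``produces at least one further element'' needs a witness: in the exceptional configuration $2k-5+c=3b_1+2$, so $2k-3\in[2b_1+3,3b_1+1]\subseteq 2^{\wedge}A'$ and lies strictly below $a_{k-1}$, which is the extra element.

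The genuine gap is the middle step, which you defer to ``a short case analysis'' --- but that step \emph{is} the lemma, and it is neither short nor a matter of parity of $u$. Concretely, what has to be done is: (a) the run gives $[2b_{m-1}+3,\,2(k-2+\lfloor u/2\rfloor)-1]\subseteq 2^{\wedge}A'$, which together with $b_m\geqslant 2b_{m-1}+2$ and $b_m\leqslant 2k-4$ pins $b_m=2b_{m-1}+2$; (b) the complementary half of the pair condition gives $[k-1+\frac{c-1}{2},\,2k-5+c-b_{m-1}]\cap A'=\emptyset$, and one must then compare $b_m$ with $2k-5+c-b_{m-1}$; the case $b_m>2k-5+c-b_{m-1}$ is eliminated only by computing $\bigl|[1,\,b_m-(2k-4+c-b_{m-1})]\cap A'\bigr|$ via Lemma~\ref{L2-2}(ii) and contradicting $a_i<2i$ --- a counting argument your sketch never invokes; (c) in the surviving case $b_m=2k-5+c-b_{m-1}$ one reads off the two-block shape of $[0,b_m]\cap A'$, and $m=2$ follows because the block $[0,b_{m-1}/2]$ already puts all of $[1,b_{m-1}-1]$ into $2^{\wedge}A'$, leaving no room for an intermediate $b_j$. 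Your claim that the analysis ``fixes $u$'' is also not what happens: $u$ is not determined; rather $b_m$ is tied to $b_{m-1}$ and $c$. Separately, note that the paper rules out the sub-case $c\leqslant b_{m-2}$ by applying Lemma~\ref{L2-2}(iii) with $b=b_{m-2}$, which makes the run start at $b_{m-2}+1$ and swallow $b_{m-1}\notin A'$; your single application with $b=b_{m-1}$ does not by itself exclude $m\geqslant 3$. As written, the proposal records the easy bookends and leaves the decisive argument unproved.
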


\begin{lem}\label{P2}
Let $m\geqslant 2$. Then there are no integers with a difference of $2$ in $[2k-3, 2k-4+b_{m-1}]\backslash 2^{\wedge}A'$.
\end{lem}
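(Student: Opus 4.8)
The plan is to argue by contradiction. Suppose $2k-4+u$ and $2k-4+u+2$ both lie in $[2k-3,\,2k-4+b_{m-1}]\setminus 2^{\wedge}A'$, so that $1\le u$ and $u+2\le b_{m-1}$. First I would record that $b_{m-1}$ is small: Lemma \ref{L2-3} gives $b_m\ge 2b_{m-1}+2$, and since $b_m\le 2k-4$ this forces $b_{m-1}\le k-3$; hence $u\le b_{m-1}-2\le k-5$, which is exactly what is needed to make the hypotheses of Lemma \ref{L2-2}(iii) legitimate for both $u$ and $u+2$.

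The engine of the proof is to turn the two ``missing'' values into two reflections. Applying Lemma \ref{L2-2}(iii) to $2k-4+u$ (with $b=u$) and to $2k-4+u+2$ (with $b=u+2$) says that on suitable intervals exactly one of each pair $\{i,\,2k-4+u-i\}$, respectively $\{i,\,2k-2+u-i\}$, lies in $A'$; equivalently $i\in A'\iff 2k-4+u-i\notin A'$ and $i\in A'\iff 2k-2+u-i\notin A'$. Composing the two reflections $i\mapsto 2k-4+u-i$ and $j\mapsto 2k-2+u-j$ yields the translation $i\mapsto i+2$, and the two membership-flips cancel, so $i\in A'\iff i+2\in A'$. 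The crucial point is to run this only on the interval $[u+1,\,b_{m-1}]$: since the reflection centres are $k-2+u/2$ and $k-1+u/2$, both at least $k-2>b_{m-1}$, this interval lies strictly to the left of both centres, so no fixed-point exceptions occur and the $2$-periodicity is clean there.

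Next I would pull this periodicity down to the bottom. Since $b_{m-1}\in B$, Lemma \ref{L2-2}(i)--(ii) give that $b_{m-1}$ is even, $b_{m-1}\notin A'$, and $i\in A'\iff b_{m-1}-i\notin A'$ for $0\le i\le b_{m-1}$. For $i\in[0,\,b_{m-1}-u-1]$ the reflected value $b_{m-1}-i$ lands in $[u+1,\,b_{m-1}]$, where $A'$ is a union of parity classes; because $b_{m-1}$ is even and not in $A'$ while $b_{m-1}+1\in A'$ by Lemma \ref{L2-2}(iv), the pattern on $[u+1,\,b_{m-1}]$ is a single residue class, and reflecting through the even number $b_{m-1}$ converts membership into a fixed parity condition on $i$. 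Thus $A'\cap[0,\,b_{m-1}-u-1]$ is also a single residue class. But $u+2\le b_{m-1}$ gives $b_{m-1}-u-1\ge1$, so both $a_0=0$ and $a_1=1$ lie in $[0,\,b_{m-1}-u-1]$; they both belong to $A'$ yet have opposite parity, contradicting that this interval meets $A'$ in one residue class. This contradiction proves the lemma.

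The main obstacle is the boundary bookkeeping rather than the idea. I expect three places to need separate care: verifying that the $2$-periodic pattern on $[u+1,\,b_{m-1}]$ is genuinely a single residue class and not the full interval (this is where $b_{m-1}\notin A'$ together with a nearby odd element of $A'$, via Lemma \ref{L2-2}(iv), must be invoked); the degenerate case $b_{m-1}=u+2$, where $[u+1,\,b_{m-1}]$ is too short for the periodicity relation to carry content and a more direct argument is required; and the smallest values of $k$, where $[2k-3,\,2k-4+b_{m-1}]$ may be too short to contain two elements at distance $2$ and the statement is vacuous. The potentially awkward fixed points of the reflections for even $u$ are avoided automatically by working on $[u+1,\,b_{m-1}]$, which is precisely why I restrict to that interval rather than to the full range $[u+1,\,2k-7]$.
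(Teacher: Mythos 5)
There is a genuine gap at the engine of your argument: you invoke Lemma \ref{L2-2}(iii) ``with $b=u$'' and ``with $b=u+2$'', but that lemma is stated and proved only for $b\in B$ (its preamble is ``for any integer $b\in B$''), and the exactly-one-per-pair conclusion really uses this: it rests on the equality $\left|[0,b]\cap A'\right|=\frac{b}{2}+1$ from part (ii), which comes from $b\notin 2^{\wedge}A'$. For an arbitrary $u\in[1,b_{m-1}-2]$ there is no reason to have $u\in B$, and then the count is not tight: one only gets $\left|[0,u]\cap A'\right|\geqslant \lceil u/2\rceil+1$, and whenever this is strict some pairs $\{i,\,2k-4+u-i\}$ with $i\in[u+1,b_{m-1}]$ are disjoint from $A'$, so the equivalence $i\in A'\iff 2k-4+u-i\notin A'$ fails exactly on the range you need. (This is not hypothetical: for the extremal sets of Theorem \ref{T2} one has $[0,4]\cap A'=\{0,1,3,4\}$, so $\left|[0,4]\cap A'\right|=4>3$.) Since the two-reflection composition, the $2$-periodicity on $[u+1,b_{m-1}]$, and the final reflection down to $[0,b_{m-1}-u-1]$ all live on this uncovered range, the argument collapses here. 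A secondary soft spot, which you partly flag: even granting the periodicity, $A'\cap[u+1,b_{m-1}]$ could be empty rather than a single residue class, since $b_{m-1}+1\in A'$ lies outside that interval; in the empty case the reflection through $b_{m-1}$ forces $[0,b_{m-1}-u-1]\subseteq A'$ and no parity contradiction arises.

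The paper's proof goes around this obstacle by working on the other side of $b_{m-1}$: it applies Lemma \ref{L2-2}(iii) with $b=b_{m-1}\in B$, so the two reflections (for $c_2=2k-4+c$ and $c_1=2k-6+c$) are valid for $i\geqslant b_{m-1}+1$, and the $2$-periodic chain is run upward from the known element $b_{m-1}+1\in A'$ along $[b_{m-1}+1,\,k-2+\lfloor c/2\rfloor]$. The contradiction is then extracted not from $0$ and $1$ but from $\frac{b_m}{2}$: by Lemma \ref{L2-3} it lies in that interval, it belongs to $A'$, hence has the forced parity, and then either $\frac{b_m}{2}\pm 2\in A'$ yields $b_m\in 2^{\wedge}A'$ or the boundary subcases give a direct membership contradiction. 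To salvage your plan you would have to prove the reflection property for pairs with smaller element in $[u+1,b_{m-1}]$, which amounts to establishing $\left|[0,u]\cap A'\right|=\lceil u/2\rceil+1$ for $u\notin B$ — and that is not available; the workable alternative is to move the argument above $b_{m-1}$ as the paper does.
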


\begin{lem}\label{P3}
Let $m\geqslant 2$. Then there are no integers with a difference of $3$ in $[2k-3, 2k-4+b_{m-1}]\backslash 2^{\wedge}A'$ unless $m=3$, $b_{1}=2$ and one of the following cases holds:

$(i)$ $b_{2}\equiv 2\pmod{3}$ and
\begin{eqnarray*}
[0, b_{3}]\cap A'
&=&\left\{3i: 0\leqslant i\leqslant \frac{2b_{2}+2}{3}\right\}\\
&&\cup \left\{3i+1: 0\leqslant i\leqslant \frac{b_{2}-2}{6}, \frac{b_{2}+1}{3}\leqslant i\leqslant \frac{b_{2}}{2}\right\};
\end{eqnarray*}

$(ii)$ $b_{2}\equiv 0\pmod{3}$ and
\begin{eqnarray*}
[0, b_{3}]\cap A'
&=&\left\{3i: 0\leqslant i\leqslant \frac{b_{2}}{6}\right\}\cup \left\{3i+1: 0\leqslant i\leqslant \frac{b_{2}}{2}\right\}\\
&&\cup \left\{b_{2}+2+3i: 0\leqslant i\leqslant \frac{b_{2}}{3}\right\};
\end{eqnarray*}

$(iii)$ $b_{2}\equiv 2\pmod{3}$ and
\begin{eqnarray*}
[0, b_{3}]\cap A'
&=&\left\{3i: 0\leqslant i\leqslant \frac{b_{2}}{2}+1\right\}\cup \left\{3i+1: 0\leqslant i\leqslant \frac{b_{2}-2}{6}\right\}\\
&&\cup \left\{b_{2}+3+3i: 0\leqslant i\leqslant \frac{b_{2}+1}{3}\right\};
\end{eqnarray*}

$(iv)$ $b_{2}\equiv 1\pmod{3}$ and
\begin{eqnarray*}
[0, b_{3}]\cap A'
&=&\left\{3i: 0\leqslant i\leqslant \frac{b_{2}+2}{6}\right\}\cup \left\{b_{2}+3+3i: 0\leqslant i\leqslant \frac{b_{2}+2}{3}\right\}\\
&&\cup \left\{3i+2: 0\leqslant i\leqslant \frac{b_{2}}{2}\right\};
\end{eqnarray*}

$(v)$ $b_{2}\equiv 0\pmod{3}$ and
\begin{eqnarray*}
[0, b_{3}]\cap A'
&=&\left\{3i: 0\leqslant i\leqslant \frac{b_{2}}{6}, \ \ \frac{b_{2}}{3}+1\leqslant i\leqslant \frac{b_{2}}{2}+1\right\}\\
&&\cup \left\{3i+1: 0\leqslant i\leqslant \frac{2b_{2}}{3}+1\right\};
\end{eqnarray*}

$(vi)$ $b_{2}\equiv 2\pmod{3}$ and
\begin{eqnarray*}
[0, b_{3}]\cap A'&=&\left\{3i: 0\leqslant i\leqslant \frac{2b_{2}+5}{3}\right\}\\
&&\cup \left\{3i+2: 0\leqslant i\leqslant \frac{b_{2}-2}{6}, \ \ \frac{b_{2}+1}{3}\leqslant i\leqslant \frac{b_{2}}{2}\right\}.
\end{eqnarray*}

In all above cases, we have $|2^{\wedge}A|\geqslant 3k-6$.
\end{lem}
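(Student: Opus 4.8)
The plan is to turn the existence of two missing sums at distance $3$ into a strong periodicity statement for $A'$ modulo $3$, and then to reconstruct $A'$ completely from both of its ends. Suppose there are two elements of $[2k-3,2k-4+b_{m-1}]\setminus 2^{\wedge}A'$ differing by $3$, say $2k-4+u_{1}$ and $2k-4+u_{2}$ with $u_{2}=u_{1}+3$, $1\leqslant u_{1}$ and $u_{2}\leqslant b_{m-1}$. First I would record that $b_{m-1}\leqslant k-3$: indeed $b_{m}\geqslant 2b_{m-1}+2$ by Lemma \ref{L2-3} and $b_{m}\leqslant 2k-4$. Hence $b:=b_{m-1}$ satisfies $b<k-2$ and $u_{1},u_{2}\leqslant b$, so Lemma \ref{L2-2}(iii) applies to both $2k-4+u_{1}$ and $2k-4+u_{2}$, while Lemma \ref{L2-2}(ii) controls $[0,b_{m-1}]\cap A'$. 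Together these give, on long ranges of $i$, the two reflection relations $i\in A'\Leftrightarrow 2k-4+u_{1}-i\notin A'$ and $i\in A'\Leftrightarrow 2k-4+u_{2}-i\notin A'$.

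The first key step is to compose these reflections. Their centres $(2k-4+u_{1})/2$ and $(2k-4+u_{2})/2$ differ by $3/2$, so the composition is the translation $i\mapsto i+3$, and one deduces $i\in A'\Leftrightarrow i+3\in A'$ on the common range. By iterating the available reflections -- those from the two missing sums together with the pairing about $b_{m-1}/2$ coming from Lemma \ref{L2-2}(ii) -- I would propagate this relation across the whole of $[0,b_{m-1}]$ and as far up as $a_{k-2}$, so that $A'$ becomes a union of arithmetic progressions of common difference $3$. This period-$3$ shape is the structural origin of the six cases, which are separated according to the residue $b_{2}\bmod 3$ and to which residues are actually occupied.

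Next I would anchor the pattern at its two ends. Since $a_{1}<2$ forces $a_{1}=1\in A'$, the distance-$3$ gap is incompatible with $2\in A'$, so $2\notin A'$ and therefore $b_{1}=2$; feeding this seed into the period-$3$ relation fixes the pattern near $0$. At the top the pattern must stop in a way compatible with $a_{k-2}<2k-4$, and the growth estimate $b_{i+1}\geqslant 2b_{i}+2$ together with $b_{m}\leqslant 2k-4$ leaves only finitely many admissible blocks, which I expect to force $m=3$ and to determine $[0,b_{3}]\cap A'$ uniquely once $b_{2}\bmod 3$ and the occupied residues are chosen. Matching all of these constraints case by case should reproduce exactly the descriptions $(i)$--$(vi)$.

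Finally, in each of the six cases $A'$ is completely explicit, so $2^{\wedge}A=2^{\wedge}A'\cup(a_{k-1}+A')$ can be evaluated directly: the period-$3$ shape makes $2^{\wedge}A'$ a computable union of arithmetic progressions, and $a_{k-1}+A'$ supplies $k-1$ further sums lying above $2k-4$, all but a controlled overlap being new; carrying out the count yields $|2^{\wedge}A|\geqslant 3k-6$ in every case. I expect the main obstacle to be the middle two steps: keeping exact track of the ranges on which each reflection is valid so that the period-$3$ conclusion really holds on a long interval, and then controlling the boundary behaviour of the pattern near $0$ and near $a_{k-2}$ tightly enough both to exclude $m\geqslant 4$ and to separate the six cases cleanly. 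By comparison the composition of reflections and the final counting are routine.
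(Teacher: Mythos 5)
Your overall strategy --- compose the two reflections from Lemma \ref{L2-2}(iii) at the two missing sums to obtain the period-$3$ relation $i\in A'\Leftrightarrow i+3\in A'$ on a long range, then reconstruct $A'$ from its ends and count --- has the same skeleton as the paper's proof. But the two steps you yourself flag as ``the main obstacle'' are where the real content lies, and your plan for them is not the mechanism that actually works. What pins down the exceptional configurations is not the pair of constraints $a_{k-2}<2k-4$ and $b_{i+1}\geqslant 2b_i+2$ at the top of the pattern; it is the requirement $b_m\notin 2^{\wedge}A'$ played against the period-$3$ structure. The paper shows that if $\tfrac{b_m}{2}\geqslant b_{m-1}+3$ then $\tfrac{b_m}{2}-3$ and $\tfrac{b_m}{2}+3$ both lie in $A'$, so $b_m\in 2^{\wedge}A'$, a contradiction, and that $\tfrac{b_m}{2}=b_{m-1}+1$ is likewise impossible; this forces $b_m=2b_{m-1}+4$, and only then does the explicit form of $[0,b_m]\cap A'$ --- hence $m=3$ and $b_1=2$ --- follow. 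Your proposal contains no step that uses $b_m\notin 2^{\wedge}A'$ in this way, and the growth estimate alone cannot force $m=3$. Relatedly, your claim that $2\notin A'$ (hence $b_1=2$) follows because ``the distance-$3$ gap is incompatible with $2\in A'$'' presupposes that the period-$3$ relation already extends down to $[0,2]$; the reflections of Lemma \ref{L2-2}(iii) are only valid for $i\geqslant b+1$ with $b\geqslant u_2$, so reaching small $i$ requires exactly the structural analysis that is missing, and in the paper $b_1=2$ is a conclusion of the final structure, not an early observation. You also omit the parity step: the paper first shows the smaller missing sum $2k-7+c$ forces $c$ odd, without which the subsequent chain of implications does not start.

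The concluding count is also not routine. For the configurations $(i)$--$(vi)$ the verification of $|2^{\wedge}A|\geqslant 3k-6$ requires a further split on $c$; for $c=7$ with $a_{k-1}=2k-2$ the paper must produce two specific sums $c',c''$ at distance $2$ in $(2k-4,2k-4+b_2]$ and invoke Lemma \ref{P2} to force $b_2<16$, after which the surviving values $b_2\in\{8,14\}$ are checked by hand. None of this is captured by asserting that ``carrying out the count yields $|2^{\wedge}A|\geqslant 3k-6$ in every case.''
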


\begin{lem}\label{L1-5}
Let $A$ be as in Theorem \ref{T2}. Write
$$D=\{d\in [1, b_{m-1}]: 2k-4+d\in 2^{\wedge}A'\}. $$
Then
$$|D|\geqslant \frac{b_{m-1}}{2}+\left\lfloor\frac{b_{m-1}}{4}\right\rfloor, $$
except for finitely exceptions. In those exceptions, we have $|2^{\wedge}A|\geqslant 3k-6$.
\end{lem}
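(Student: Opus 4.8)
The plan is to pass to the complement of $D$ inside $[1,b_{m-1}]$ and show that it is sparse. Set
$$E=[1,b_{m-1}]\backslash D=\{d\in[1,b_{m-1}]:2k-4+d\notin 2^{\wedge}A'\},$$
so that $|D|=b_{m-1}-|E|$. Since every element of $B$ is even by Lemma \ref{L2-2}(i), $b_{m-1}$ is even, and for even $b_{m-1}$ one has the identity $\frac{b_{m-1}}{2}=\lceil b_{m-1}/4\rceil+\lfloor b_{m-1}/4\rfloor$; hence the target bound $|D|\geqslant\frac{b_{m-1}}{2}+\lfloor b_{m-1}/4\rfloor$ is equivalent to the upper bound $|E|\leqslant\lceil b_{m-1}/4\rceil$. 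So it suffices to bound $|E|$ from above.

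The key observation is that the translation $d\mapsto 2k-4+d$ is a bijection from $[1,b_{m-1}]$ onto $[2k-3,2k-4+b_{m-1}]$ carrying $E$ exactly onto $[2k-3,2k-4+b_{m-1}]\backslash 2^{\wedge}A'$, which is precisely the set analyzed in Lemmas \ref{P1}, \ref{P2} and \ref{P3}, and it preserves differences. Outside the exceptional configurations listed there, those three lemmas assert respectively that this set contains no two elements whose difference equals $1$, $2$, or $3$; equivalently, $E$ has no two elements at distance $1$, $2$, or $3$. Writing $E=\{e_{1}<e_{2}<\cdots<e_{t}\}$, consecutive elements therefore satisfy $e_{j+1}-e_{j}\geqslant 4$, whence $b_{m-1}\geqslant e_{t}\geqslant e_{1}+4(t-1)\geqslant 1+4(t-1)$, so that $t\leqslant\frac{b_{m-1}+3}{4}$ and thus $|E|=t\leqslant\lceil b_{m-1}/4\rceil$. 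This is exactly the bound needed, and it gives $|D|\geqslant\frac{b_{m-1}}{2}+\lfloor b_{m-1}/4\rfloor$.

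It then remains to absorb the excluded configurations. These are exactly the exceptions recorded in Lemmas \ref{P1} and \ref{P3} (Lemma \ref{P2} admits none), namely the single case with $m=2$ and the six families with $m=3$ and $b_{1}=2$. In every one of these, Lemmas \ref{P1} and \ref{P3} already establish the stronger conclusion $|2^{\wedge}A|\geqslant 3k-6$, which is precisely what Lemma \ref{L1-5} asserts in its exceptional regime. Hence no separate computation is required: the finitely many exceptions are dispatched by quoting Lemmas \ref{P1} and \ref{P3} verbatim.

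I expect the only delicate points to be bookkeeping rather than substance. The genuinely hard work—proving the no-distance-$1,2,3$ structure of $[2k-3,2k-4+b_{m-1}]\backslash 2^{\wedge}A'$—has already been carried out in Lemmas \ref{P1}--\ref{P3}, so the present lemma reduces to the elementary spacing count above. The two places demanding care are first the parity of $b_{m-1}$, on which the floor/ceiling identity rests, and second the verification that the exception lists of Lemmas \ref{P1} and \ref{P3} exhaust all obstructions to the spacing $e_{j+1}-e_{j}\geqslant 4$, so that the clause ``except for finitely many exceptions'' is faithfully the union of those lists.
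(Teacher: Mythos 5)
Your proposal is correct and follows essentially the same route as the paper: both reduce the bound to the fact (from Lemmas \ref{P1}--\ref{P3}) that the complement of $D$ in $[1,b_{m-1}]$ has consecutive gaps of at least $4$, and both dispose of the exceptional configurations by quoting the $|2^{\wedge}A|\geqslant 3k-6$ conclusions of Lemmas \ref{P1} and \ref{P3}. Your direct gap-count $e_{j+1}-e_{j}\geqslant 4\Rightarrow |E|\leqslant\lceil b_{m-1}/4\rceil$ is a slightly cleaner packaging of the paper's division-algorithm/interval-partition count and uses the evenness of $b_{m-1}$ correctly.
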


\begin{proof}
Write
$$C=\{c\in [1, b_{m-1}]: 2k-4+c\not\in 2^{\wedge}A'\}. $$
Then
$$C\cup D=[1, b_{m-1}], \ \ C\cap D=\emptyset. $$

Let $c_{1}, c_{2}\in C$.
By Lemmas \ref{P1}-\ref{P3}, we have $c_{2}-c_{1}\geqslant 4$,
except for finite exceptional sets $A$ given in Lemmas \ref{P1} and \ref{P3}.
In those exceptions, we have $|2^{\wedge}A|\geqslant 3k-6$.

By the division algorithm, there exist integers $q$ and $r$ such that
$$b_{m-1}=4q+r, \ \ r=0 \text{ or } 2. $$
If $r=0$, then
$$|[1, b_{m-1}]\cap C|=\left|\bigcup_{i=0}^{q-1}[4i, 4(i+1)]\cap C\right|\leqslant \sum_{i=0}^{q-1}1=q. $$
Hence
$$|D|=b_{m-1}-|C|\geqslant b_{m-1}-q=3q. $$
If $r=2$, then
\begin{eqnarray*}
|[1, b_{m-1}]\cap C|&=&\left|\bigcup_{i=0}^{q-1}[4i, 4(i+1)]\cap C\right|+|\{4q+1, 4q+2\}\cap C|\\
&\leqslant& 1+\sum_{i=0}^{q-1}1=q+1.
\end{eqnarray*}
Hence
$$|D|=b_{m-1}-|C|\geqslant b_{m-1}-(q+1)=3q+1. $$

In all,
$$|D|\geqslant \frac{b_{m-1}}{2}+\left\lfloor\frac{b_{m-1}}{4}\right\rfloor. $$

This completes the proof of Lemma \ref{L1-5}.
\end{proof}

\begin{pro}\label{P4}
Let $m\geqslant 2$. Then $2k-3+b_{m-1}$, $2k-2+b_{m-1}\not\in 2^{\wedge}A'$ if and only if $m=2$ and one of the following cases holds:

(i) $k\equiv 1\pmod {2}$ and
$$A'=\left[0, \frac{k-3}{2}\right]\cup \left[k-2, \frac{3(k-3)}{2}+1\right]$$ and $B=\{k-3, 2k-4\}$;

(ii) $k\equiv 0\pmod {3}$ and
$$A'=\left[0, \frac{k-3}{3}\right]\cup\left[\frac{2k-3}{3}, k-2\right]\cup\left[\frac{4k-6}{3}, \frac{5k-12}{3}\right]$$ and $B=\left\{\frac{2k-6}{3}, 2k-4\right\}$;

(iii) $k\equiv 1\pmod {3}$ and
$$A'=\left[0, \frac{k-4}{3}\right]\cup \left[\frac{2k-5}{3}, k-3\right]\cup \left[\frac{4k-7}{3}, \frac{5k-11}{3}\right]$$ and 
$B=\left\{\frac{2k-8}{3}, \frac{4k-10}{3}\right\}$.
\end{pro}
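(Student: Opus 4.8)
The plan is to prove the two implications separately; the forward (``if'') direction is a finite verification, while the reverse (``only if'') direction carries the real content.

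For the ``if'' direction I would treat the three displayed families of $A'$ in turn. In each case $A'$ is a union of two or three integer intervals, so $2^{\wedge}A'$ is a finite union of intervals: for disjoint blocks $I_{p},I_{q}$ one has $I_{p}\widehat{+}I_{q}=I_{p}+I_{q}$, while $I_{p}\widehat{+}I_{p}=[2\min I_{p}+1,\,2\max I_{p}-1]$ when $|I_{p}|\geq 2$. From the resulting explicit description of $2^{\wedge}A'$ one reads off $B=[1,2k-4]\setminus 2^{\wedge}A'$, checks that it is the stated two-element set (so that $m=2$ and $b_{m-1}=b_{1}$), and checks directly that $2k-3+b_{1}$ and $2k-2+b_{1}$ are missing from $2^{\wedge}A'$: in case (i) they exceed $\max 2^{\wedge}A'=a_{k-2}+a_{k-3}$, while in cases (ii)--(iii) they sit in a genuine length-two gap lying between the largest ``top block $+$ middle block'' sum and the smallest ``top block $+$ top block'' sum.

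For the ``only if'' direction, assume $2k-3+b_{m-1},\,2k-2+b_{m-1}\notin 2^{\wedge}A'$. The first idea is that each exclusion is an antisymmetry of $A'$: from $2k-3+b_{m-1}\notin 2^{\wedge}A'$ we get $|\{i,\,2k-3+b_{m-1}-i\}\cap A'|\leq 1$, and similarly for $2k-2+b_{m-1}$, so $A'$ is antisymmetric about the two centres $c_{1}=(2k-3+b_{m-1})/2$ and $c_{2}=(2k-2+b_{m-1})/2$, which differ by $\frac{1}{2}$. Composing the two antisymmetries gives $x\in A'\iff x+1\in A'$ on the range where both are active, i.e. $A'$ is an interval there. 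On the other hand $b_{m}\in B$ supplies, via Lemma \ref{L2-2}(ii), an exact reflection of $[0,b_{m}]\cap A'$ about $b_{m}/2$ together with the count $|[0,b_{m}]\cap A'|=b_{m}/2+1$, and Lemma \ref{L2-3} controls the spacing $b_{m}\geq 2b_{m-1}+2$. Combining these three reflections with the spacing Lemmas \ref{P1}--\ref{P3} — which forbid two elements of $[2k-3,\,2k-4+b_{m-1}]\setminus 2^{\wedge}A'$ at distance $\leq 3$ outside the listed exceptional $A'$ (all of which already satisfy $|2^{\wedge}A|\geq 3k-6$) — forces $A'$ to be a union of at most three intervals whose lengths and gaps are rigidly determined. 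I would then split according to the relation allowed by Lemma \ref{L2-3} between $b_{m-1}$ and $b_{m}$ (the tight case $b_{m}=2b_{m-1}+2$ versus $b_{m}>2b_{m-1}+2$, and whether $b_{m}=2k-4$) together with the residue of $b_{m-1}$; matching the interval endpoints against the count $|A'|=k-1$ then yields precisely the three families, and in each the set $B$ turns out to have exactly two elements, so that $m=2$ emerges rather than being assumed.

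I expect the main obstacle to be the bookkeeping of the three reflections' ranges of validity and showing that their overlaps leave no configuration beyond the three families. In particular one must cleanly separate the case where $2k-2+b_{m-1}$ merely exceeds $\max 2^{\wedge}A'$ (family (i)) from the genuine-gap cases (families (ii)--(iii)), rule out four-or-more-block configurations and values $m\geq 3$, and verify that the parity condition ($k$ odd) and the divisibility conditions ($k\equiv 0$ or $1\pmod 3$) fall out of the endpoint matching rather than admitting spurious extra solutions.
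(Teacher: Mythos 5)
Your plan matches the paper's proof in its essentials: the paper likewise turns the two excluded sums into exact antisymmetries of $A'$ about $(2k-3+b_{m-1})/2$ and $(2k-2+b_{m-1})/2$ (made exact by the count $|[b_{m-1}+1,2k-4]\cap A'|=k-2-\tfrac{b_{m-1}}{2}$ from Lemma \ref{L2-2}(ii)), composes them to get rigid block structure, and pins down the parameters using $\tfrac{b_m}{2}\in A'$, $b_m+1\in A'$ when $b_m<2k-4$, and Lemma \ref{L2-3}. Two small points of difference: the paper makes no use of Lemmas \ref{P1}--\ref{P3} here (they constrain $[2k-3,\,2k-4+b_{m-1}]\setminus 2^{\wedge}A'$, whereas $2k-3+b_{m-1}$ and $2k-2+b_{m-1}$ lie just outside that range, so they give you nothing about the two target sums), and your ``composition gives an interval'' step should be stated more carefully --- the second antisymmetry fails at exactly one pair $\{b_{m-1}+t,\,2k-2-t\}$ with both elements absent, and it is the case analysis on this unique index $t$ that produces the three-block families (ii) and (iii) rather than a single interval.
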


\section{Proof of Theorem \ref{T2}}\label{S5}
It is easy to verify that the Freiman-Lev conjecture is true for all $3\leqslant k\leqslant 9$, so we assume that $k>9$.
Let $A'=A\setminus\{a_{k-1}\}$ and $B=[1, 2k-4]\setminus 2^{\wedge}A'$. Let $m$ be a positive integer and
$$B=\{b_{1}<b_{2}<\cdots<b_{m}\}, \ \ b_{0}=0. $$
We divide into the following three cases:

{\bf Case 1. }$m\leqslant1$. Then
$$[1, 2k-4]\backslash B\subseteq 2^{\wedge}A'\subseteq 2^{\wedge}A, \ \ a_{k-1}+A'\subseteq 2^{\wedge}A$$
and
$$([1, 2k-4]\backslash B)\cap (a_{k-1}+A')=\emptyset, $$
thus
$$|2^{\wedge}A|\geqslant (2k-4-1)+(k-1)=3k-6. $$

{\bf Case 2. }$m\geqslant 3$. Then our goal is find enough many elements of $2^{\wedge}A$.

\begin{itemize}
  \item Firstly, the set $[1, 2k-4]\backslash B$ provides $2k-4-m$ integers of $2^{\wedge}A'$.

  \item By Lemma \ref{L2-2} (ii), we have
  $$\left|[0, b_{m-1}]\cap A'\right|=\frac{b_{m-1}}{2}+1, $$
  thus
  $$|[b_{m-1}+1, a_{k-2}]\cap A'|=k-2-\frac{b_{m-1}}{2}. $$
  Hence, $a_{k-1}+([b_{m-1}+1, a_{k-2}]\cap A)$ provides $k-2-\frac{b_{m-1}}{2}$ integers of $2^{\wedge}A$.

  \item By Proposition \ref{P4}, one of $2k-3+b_{m-1}$ and $2k-2+b_{m-1}$ belongs to $2^{\wedge}A'$.
\end{itemize}
To sum up, we obtain at least $3k-6-\frac{b_{m-1}}{2}-m+1$ integers of $2^{\wedge}A$.

If $m\geqslant 4$, then by Lemma \ref{L2-3}, we have
$$b_{m-1}\geqslant 2b_{m-2}+2\geqslant \cdots\geqslant \sum_{i=1}^{m-1}2^{i}=2^{m}-2. $$
By Lemma \ref{L1-5}, we have
$$|D|\geqslant \frac{b_{m-1}}{2}+\left\lfloor\frac{2^{m}-2}{4}\right\rfloor\geqslant \frac{b_{m-1}}{2}+m-1. $$
Hence
$$|2^{\wedge}A|\geqslant 3k-6. $$

Assume that $m=3$. If $b_{2}\geqslant 8$, then $|D|\geqslant \frac{b_{2}}{2}+2$, thus $|2^{\wedge}A|\geqslant 3k-6$.
Let $b_{1}=2$ and $b_{2}=6$. To prove $|2^{\wedge}A|\geqslant 3k-6$, we shall show that $|D|\geqslant 5$.

Suppose, for a contradiction, that $|D|=4$. By Lemma \ref{L2-2} (ii), we have
$$[1, 6]\cap A'=\{1, 3, 4\}. $$
By Lemma \ref{L1-5}, we have
$$D=\{2, 3, 4, 6\}, \ \ \{2, 3, 4, 5\} \text{ or } \{1, 3, 4, 5\}. $$
If $D=\{2, 3, 4, 6\}$, then
$$2k-3, 2k+1\not\in 2^{\wedge}A'. $$
By Lemma \ref{L2-2} (iii), we have
$$|\{i, 2k-3-i\}\cap A'|=1, \ \ i=7, \ldots, k-2, $$
$$|\{i, 2k+1-i\}\cap A'|=1, \ \ i=7, \ldots, k. $$
If $b_{3}<2k-4$, then $b_{3}+1\in A'$, thus $2k+1-(b_{3}+1)\not\in A'$,
and so
$$2k-3-(2k+1-(b_{3}+1))=b_{3}-3\in A', $$
which contradicts with $3\in A'$.
Hence, $b_{3}=2k-4$, and so
$$k-2\in A'\Leftrightarrow k-1\not\in A'\Leftrightarrow k-3\in A'\Leftrightarrow k\not\in A'\Leftrightarrow k+1, k-4\in A', $$
which contradicts with $2k-3\not\in 2^{\wedge}A'$.

The cases $D=\{1, 3, 4, 5\}$ and $D=\{2, 3, 4, 5\}$ are similar to the above.

In all, $|2^{\wedge}A|\geqslant 3k-6$.

{\bf Case 3. }$m=2$.
Besides the above case, we have the following $3k-8-\frac{b_{1}}{2}$ elements of $2^{\wedge}A$:
$$[1, 2k-4]\backslash B, \ \ a_{k-1}+[b_{1}+1, a_{k-2}]\cap A'. $$
If $b_{1}\geqslant 8$, then by Lemma \ref{L1-5}, we have $|D|\geqslant \frac{b_{1}}{2}+2$, thus $|2^{\wedge}A|\geqslant 3k-6$.

{\bf Subcase 3.1. }$b_{1}=6$. Then $|D|\geqslant \frac{b_{1}}{2}+1$.
By Lemma \ref{L2-2} (i) and (ii), we have
$$[1, 6]\cap A'=\{1, 2, 3\}. $$
If $\{2k+3, 2k+4\}\cap 2^{\wedge}A'\neq\emptyset$, then $|2^{\wedge}A|\geqslant 3k-6$.
Assume that
$$\{2k+3, 2k+4\}\cap 2^{\wedge}A'=\emptyset. $$
By Proposition \ref{P4}, we have
$$A'=[0, 3]\cup [7, 10], \ \ [0, 3]\cup [7, 10]\cup [14, 16], \ \ [0, 3]\cup [7, 10]\cup [15, 18], $$
it implies that $|2^{\wedge}A|\geqslant 3k-6$.

{\bf Subcase 3.2. }$b_{1}=4$. Then $|2^{\wedge}A|\geqslant 3k-6$. The proof is similar to subcase 3.1.

{\bf Subcase 3.3. }$b_{1}=2$. By Lemma \ref{L2-2} (i) and (iv), we have
$$1\in A', \ \ 2\not\in A', \ \ 3\in A. $$
By Lemma \ref{L1-5},
$$\{2k-3, 2k-2\}\cap 2^{\wedge}A'\neq \emptyset. $$
Since $a_{k-1}\geqslant 2k-2$, we have
$$[1, 2k-4]\backslash\{2, b_{2}\}\subseteq 2^{\wedge}A', \ \ a_{k-1}+A'\subseteq 2^{\wedge}A. $$
If $a_{k-1}>2k-2$, then $|2^{\wedge}A|\geqslant 3k-6$.
Assume that $a_{k-1}=2k-2$. If $2k-3\in 2^{\wedge}A'$ or $2k\in 2^{\wedge}A'$, then $|2^{\wedge}A|\geqslant 3k-6$.

Assume that $a_{k-1}=2k-2$ and $2k-3$, $2k\not\in 2^{\wedge}A'$.
Similar to Lemma \ref{P3}, we have
$$A=\left\{3i, 3i+1, k+1+3i: i=0, \ldots, \frac{k-3}{3}\right\}, \ \ k\equiv 0\pmod{3}$$
or
$$A=\left\{3i, 3i+1, k-1+3i, 2k-2: i=0, \ldots, \frac{k-4}{3}\right\}, \ \ k\equiv 1\pmod{3}, $$
thus $b_{2}=2k-6$ and $|2^{\wedge}A|=3k-7$.

This completes the proof of Theorem \ref{T2}.

\section{Preliminaries of Theorem \ref{T3}}
Let $A=\{0=a_{0}<a_{1}<\cdots<a_{k-1}\}$. For any integer $w$, let $S(w)=\{w, w+a_{k-1}\}$ and 
\begin{equation}\label{e2-01}
W=\left\{w\in [0, a_{k-1}]\backslash A:  S(w)\cap 2^{\wedge}A=\emptyset\right\}.
\end{equation}

\begin{lem}\label{L3-1}(\cite{Wang}, Theorem 1.1)
Let $A$ be a set of $k\geqslant 5$ integers such that $A\subseteq [0, l]$, $0, l\in A$ and $\gcd(A)=1$. If $l\leqslant 2k-3$, then $|W|\leqslant 2$.
\end{lem}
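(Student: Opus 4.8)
The plan is to read off from the hypotheses two elementary containments, exploit a reflection symmetry, and then convert the two non-membership conditions defining $W$ into counting inequalities that leave essentially no room for $A$; the residual rigidity is what forces $|W|\le 2$. Throughout write $l=a_{k-1}$.

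First I would record two containments that hold for any $A$ with $0,l\in A$. Since $0=a_{0}\in A$, every $a\in A\setminus\{0\}$ equals $a_{0}+a$ with $a_{0}\ne a$, so $A\setminus\{0\}\subseteq 2^{\wedge}A$; likewise $l=a_{k-1}\in A$ gives $(l+A)\setminus\{2l\}\subseteq 2^{\wedge}A$. Next I would introduce the reflection $B=l-A$, which is again a set of $k$ integers in $[0,l]$ with $0,l\in B$, and observe that $c\in 2^{\wedge}A\iff 2l-c\in 2^{\wedge}B$; in particular $w+l\notin 2^{\wedge}A\iff l-w\notin 2^{\wedge}B$. This lets me treat the ``upper'' condition $w+l\notin 2^{\wedge}A$ as a ``lower'' condition for the reflected set, so the two halves of $S(w)$ are governed by a single counting lemma.

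The key counting step is: if $c\notin 2^{\wedge}A$ then $|A\cap[0,c]|\le \lfloor c/2\rfloor+1$. Indeed any representation $c=a_{i}+a_{j}$ uses $a_{i},a_{j}\in A\cap[0,c]$, so $c\notin 2^{\wedge}A$ means that for each complementary pair $\{i,c-i\}$ with $i\ne c-i$ at most one member lies in $A$; counting the $\lfloor c/2\rfloor$ such pairs together with the possible midpoint gives the bound. Applying this to $w$ and, through the reflection, to $l-w$ for $B$, I obtain for every $w\in W$
\[
|A\cap[0,w]|\le \Big\lfloor \tfrac{w}{2}\Big\rfloor+1,\qquad |A\cap[w,l]|\le \Big\lfloor \tfrac{l-w}{2}\Big\rfloor+1 .
\]
Since $w\notin A$, the two left-hand sides add to $|A|=k$, whence $k\le \lfloor l/2\rfloor+2$, i.e. $l\ge 2k-4$. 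Combined with $l\le 2k-3$ this already forces $W=\emptyset$ whenever $l\le 2k-5$, and in the two surviving cases $l\in\{2k-4,2k-3\}$ both displayed inequalities are in fact equalities. Equality forces a rigid \emph{transversal} structure: on $[0,w]$ the set $A$ contains exactly one element of each complementary pair $\{i,w-i\}$ (plus the midpoint $w/2$ when $w$ is even), and symmetrically on $[w,l]$.

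The remaining and hardest step is to pass from ``each $w\in W$ imposes a transversal condition'' to ``there are at most two such $w$.'' Here I would compare the conditions attached to two elements $w_{1}<w_{2}$ of $W$: each says that the membership pattern of $A$ on the relevant interval is reversed by the reflection $x\mapsto w_{i}-x$, and the composition of the two reflections is the translation $x\mapsto x+(w_{2}-w_{1})$. Chasing $A$-membership along this translation should pin $A$, on the overlap, to a union of arithmetic progressions whose common difference is controlled by $w_{2}-w_{1}$; a third element $w_{3}\in W$ would then impose a second, incompatible common difference, and together with $\gcd(A)=1$ and $0,l\in A$ this yields a contradiction. I expect this compatibility analysis — showing that three transversal structures cannot coexist — to be the main obstacle, and it is where a structural input such as Freiman's $3k-4$ theorem (Theorem C) would most plausibly be invoked to organise the finitely many surviving configurations.
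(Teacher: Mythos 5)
Your preliminary reductions are correct and worth keeping: the containments $A\setminus\{0\}\subseteq 2^{\wedge}A$ and $(l+A)\setminus\{2l\}\subseteq 2^{\wedge}A$, the reflection trick identifying $|A\cap[w,l]|$ with $|B\cap[0,l-w]|$ for $B=l-A$, the pair-counting bound $|A\cap[0,c]|\leqslant\lfloor c/2\rfloor+1$ when $c\notin 2^{\wedge}A$, and the conclusion that any $w\in W$ forces $k\leqslant\lfloor l/2\rfloor+2$, hence $W=\emptyset$ for $l\leqslant 2k-5$ and a rigid transversal structure ($|\{i,w-i\}\cap A|=1$ and $|\{i,w+l-i\}\cap A|=1$ on the relevant ranges) for $l\in\{2k-4,2k-3\}$. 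But everything up to this point only constrains each $w\in W$ \emph{individually}; the actual content of the lemma is the incompatibility of three such constraints, and that step you do not prove. You write that chasing membership along the translation $x\mapsto x+(w_{2}-w_{1})$ ``should pin $A$'' to a union of progressions and that a third $w_{3}$ ``would then impose a second, incompatible common difference,'' but you give no argument: the translation-invariance you get from composing the two reflections about $w_{1}/2$ and $w_{2}/2$ holds only on a limited overlap interval, it must be spliced with the invariances coming from the \emph{upper} reflections about $(w_{i}+l)/2$, and turning this into the global structure $A=\{a_{k-1}\}\cup(U+H)\cup\bigcup_{v}\mathcal{D}^{-}(v)$ (and then refuting a third $w$) is precisely the nontrivial part. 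As it stands this is a plan for a proof, not a proof.

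For context: the paper does not prove this statement either; it imports it verbatim as Theorem 1.1 of the authors' companion paper \cite{Wang}, and the amount of machinery quoted alongside it (Lemma \ref{L3-2}, i.e.\ Proposition 1.4 of \cite{Wang}, with the sets $U$, $H$, $V$, $\mathcal{D}^{-}(v)$ and the identities (\ref{eq3.2-1})--(\ref{eq3.2-4})) is a fair indication of how much work your final paragraph is eliding. Your instinct that the composition of reflections yields a translation whose period governs the structure is exactly the mechanism behind $H=m\mathbb{Z}\cap[0,a_{k-1})$ and $\mathcal{D}^{-}(v)$ in that companion result, so the outline points in the right direction; but to count this as a proof you would need to carry out the splicing of the four reflections attached to $w_{1},w_{2}$, derive the structure theorem, and then exhibit the contradiction with a hypothetical $w_{3}$ using $\gcd(A)=1$ and $0,l\in A$. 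I would also drop the suggestion that Freiman's $3k-4$ theorem organises the endgame: nothing in the surviving configurations is naturally indexed by $|2A|$, and the companion paper's argument is self-contained combinatorics on the reflection structure.
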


Assume that $W=\{w_{1}, w_{2}\}$ and $\gcd(w_{2}-w_{1}, a_{k-1})=m$. Write
\begin{equation}\label{e2-02}V=\left\{\frac{w_{2}}{2}, \frac{w_{2}+a_{k-1}}{2}\right\}\cap \mathbb{Z}, \ \ H=m\mathbb{Z}\cap [0,a_{k-1}). \end{equation}
For $v\in V$, $x\in\left[0, \frac{a_{k-1}}{m}-1\right]$, let
\begin{equation}\label{e2-03}r_v(x):=v+x(w_{2}-w_{1})-q(x)a_{k-1}, \end{equation}
where $q(x)$ is the unique integer such that $0\leqslant r_v(x)<a_{k-1}$.
Define
$$\mathcal{D}^{-}(v):=\left\{r_v(x): x\in\left[0, \frac{a_{k-1}}{2m}-\frac{1}{2}\right]\right\}. $$

Actually, if $\frac{w_{2}-w_{1}}{m}$ is even, then
\begin{equation}\label{eq3.2-1}\frac{w_{1}}{2}=\frac{w_{2}}{2}+\left(\frac{l}{2m}-\frac{1}{2}\right)(w_{2}-w_{1})-\frac{w_{2}-w_{1}}{2m}l, \end{equation}
\begin{equation}\label{eq3.2-2}\frac{w_{1}+l}{2}=\frac{w_{2}+l}{2}+\left(\frac{l}{2m}-\frac{1}{2}\right)(w_{2}-w_{1})-\frac{w_{2}-w_{1}}{2m}l. \end{equation}
If $\frac{w_{2}-w_{1}}{m}$ is odd, then
\begin{equation}\label{eq3.2-3}\frac{w_{1}+l}{2}=\frac{w_{2}}{2}+\left(\frac{l}{2m}-\frac{1}{2}\right)(w_{2}-w_{1})-\left(\frac{w_{2}-w_{1}}{2m}-\frac{1}{2}\right)l, \end{equation}
\begin{equation}\label{eq3.2-4}\frac{w_{1}}{2}=\frac{w_{2}+l}{2}+\left(\frac{l}{2m}-\frac{1}{2}\right)(w_{2}-w_{1})-\left(\frac{w_{2}-w_{1}}{2m}+\frac{1}{2}\right)l. \end{equation}
By the above (\ref{eq3.2-1})-(\ref{eq3.2-4}), we know that $\frac{w_{1}}{2}$ and $\frac{w_{1}+l}{2}$ can be represented as the form $r(x)$.
These equations will be used later.

\begin{lem}\label{L3-2}(\cite{Wang}, Proposition 1.4)
Let $A$ be a set of $k>7$ integers such that $A\subseteq [0, l]$, $0, l\in A$ and $l\leqslant 2k-3$.
Let $w_{1}, w_{2}\in W$ and $\gcd(w_{2}-w_{1}, a_{k-1})=m$. Then
$$A=\{a_{k-1}\}\cup(U+H)\cup\bigcup_{v\in V}\mathcal{D}^{-}(v), $$
where $U=\{u\in [0, m-1]\cap A: 2u\not\equiv w_{2}\pmod{m}\}$.
\end{lem}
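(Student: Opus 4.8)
The plan is to reduce everything to the cyclic group $\mathbb{Z}/a_{k-1}\mathbb{Z}$ and to read the defining property of $W$ as a statement about circular restricted sums. Write $n=a_{k-1}$ and $d=w_{2}-w_{1}$. Since any sum $a+a'$ of two elements of $A\subseteq[0,n]$ lies in $[0,2n]$, the condition $S(w)\cap 2^{\wedge}A=\emptyset$, i.e.\ $w,w+n\notin 2^{\wedge}A$, is equivalent to saying that $w$ admits no representation $w\equiv a+a'\pmod n$ with $a,a'\in A$ distinct. I would pass to $\overline{A}=A\bmod n$, in which $0$ and $l=n$ collapse so that $|\overline{A}|=k-1$, and let $\chi$ be its characteristic function on $\mathbb{Z}/n\mathbb{Z}$. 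For each $w_{i}$ the involution $c\mapsto w_{i}-c$ pairs up the residues, its fixed points being the solutions of $2c\equiv w_{i}\pmod n$, and the forbidden-sum property gives $\chi(c)+\chi(w_{i}-c)\le 1$ on every non-fixed pair.

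The next step is to upgrade these inequalities to equalities. Each non-degenerate pair contributes at most one element to $\overline{A}$, so $|\overline{A}|$ is at most the number of such pairs plus the number of fixed points lying in $\overline{A}$; since $|\overline{A}|=k-1$ while $n\le 2k-3$ keeps the number of pairs at most $k-2$, the slack is nonnegative only if \emph{every} non-fixed pair meets $\overline{A}$ in exactly one point and the pertinent fixed points lie in $\overline{A}$. (As a by-product this pins down $n\in\{2k-4,2k-3\}$ and, when $n$ is even, forces each $w_{i}$ to be even.) This yields the exact reflection identities $\chi(c)+\chi(w_{i}-c)=1$ for all non-fixed $c$ and both $i$.

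Composing the two reflections is where the structure emerges: subtracting the identities for $w_{1}$ and $w_{2}$ gives $\chi(c)=\chi(c+d)$ whenever $c$ avoids the fixed points, and since $d$ generates $m\mathbb{Z}/n\mathbb{Z}$ with $m=\gcd(d,n)$, $\chi$ is constant along each coset of $m\mathbb{Z}/n\mathbb{Z}$ except possibly where it meets a fixed point. A residue class $u\pmod m$ contains a fixed point exactly when $2u\equiv w_{2}\pmod m$ (note $w_{1}\equiv w_{2}\pmod m$); hence on the classes with $2u\not\equiv w_{2}\pmod m$ the function $\chi$ is genuinely constant, producing the full classes recorded by $U+H$. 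On a class containing fixed points, the orbit of $c\mapsto c+d$ meets a fixed point $v\in V$ of $c\mapsto w_{2}-c$ and, halfway around, a fixed point of $c\mapsto w_{1}-c$, where membership flips; thus $\overline{A}$ meets this orbit in precisely the half-orbit $\mathcal{D}^{-}(v)$. Here the identities (\ref{eq3.2-1})--(\ref{eq3.2-4}) are exactly what certifies that $r_{v}(x)$ reaches $\tfrac{w_{1}}{2}$ or $\tfrac{w_{1}+n}{2}$ at $x=\tfrac{n}{2m}-\tfrac12$, pinning the endpoint of the half-orbit. Restoring the top element $a_{k-1}$, which was the representative discarded when $0$ and $l$ were identified, then gives $A=\{a_{k-1}\}\cup(U+H)\cup\bigcup_{v\in V}\mathcal{D}^{-}(v)$.

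The main obstacle I anticipate is orientation and bookkeeping rather than the counting. One must verify that it is the half-orbit \emph{starting} at $v$ (and not its complement) that lies in $\overline{A}$, handle the doubled residue class of $0\equiv a_{k-1}$ consistently throughout the pairing and the periodicity argument, and check that $V$ supplies exactly one admissible starting fixed point for each bad class, so that $\bigcup_{v\in V}\mathcal{D}^{-}(v)$ is neither redundant nor deficient. Settling the orientation of the half-orbits, for which the explicit identities (\ref{eq3.2-1})--(\ref{eq3.2-4}) are tailor-made, is the delicate heart of the argument.
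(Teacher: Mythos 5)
The paper does not actually prove this lemma: it is imported verbatim from \cite{Wang} (Proposition~1.4 there), so there is no in-paper argument to compare yours against. Judged on its own, your strategy is the natural one and, as far as I can check, sound: reducing modulo $n=a_{k-1}$, using the pairing $c\mapsto w_i-c$ together with $|\overline{A}|=k-1$ and $n\leqslant 2k-3$ to force the exact reflection identities $\chi(c)+\chi(w_i-c)=1$ (and $\chi=1$ at the fixed points), composing the two reflections to get $\chi(c)=\chi(c+(w_2-w_1))$ away from the classes $2c\equiv w_2\pmod m$, and then splitting the conclusion into the full cosets $U+H$ and the half-orbits $\mathcal{D}^{-}(v)$ on the exceptional classes. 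One point where your sketch stops short of a proof is the claim that membership actually \emph{flips} at the two special positions of a bad coset, i.e.\ that the complementary half-orbit is excluded from $\overline{A}$ rather than merely not forced to agree with $\mathcal{D}^{-}(v)$: periodicity alone only tells you $\chi$ is constant on each of the two arcs, and both arcs contain a fixed point lying in $\overline{A}$ is what you must rule out. The missing mechanism is to restrict the reflection $c\mapsto w_2-c$ to the bad coset itself, where it pairs $v+xd$ with $v-xd$; the exact identity on these pairs then shows the arc through $v$ lies in $\overline{A}$ and its mirror image does not, which is precisely the orientation question you flagged. With that supplied (and the routine bookkeeping for the doubled residue of $0\equiv a_{k-1}$ and for $|V|=2$ when $n$ is even), your outline closes to a complete proof.
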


\begin{lem}\label{L3-3}
Let $k>7$. Let $A=\{0=a_{0}<a_{1}<\cdots<a_{k-1}\}$ be the set of integers.
Let $w_{1}, w_{2}\in W$ and $\gcd(w_{2}-w_{1}, a_{k-1})=m$. Let $V$, $H$, $U$ and $\mathcal{D}^{-}(v)$ be the sets defined as above.
If $a_{k-1}=2k-3$, then

(i) $|U|=\frac{m-1}{2}$;

(ii) Let $u_{1}, u_{2}\in [0, m-1]$ be two different integers such that $u_{1}+u_{2}\equiv w_{2}\pmod{m}$.
Then $|\{u_{1}, u_{2}\}\cap U|=1$.
\end{lem}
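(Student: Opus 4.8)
The plan is to prove (i) by a direct cardinality count based on the decomposition
$A=\{a_{k-1}\}\cup(U+H)\cup\bigcup_{v\in V}\mathcal{D}^{-}(v)$ supplied by Lemma \ref{L3-2}, and then to deduce (ii) from (i) together with a short extremal argument. Throughout I would write $t=\frac{a_{k-1}}{m}=\frac{2k-3}{m}$, which is an integer since $m\mid a_{k-1}$; because $a_{k-1}$ is odd, $m$ and $t$ are both odd, and since $w_1\ne w_2$ together with $w_1,w_2\in[0,a_{k-1})$ forces $m=\gcd(w_2-w_1,a_{k-1})<a_{k-1}$, we get $t\geqslant 2$.

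First I would record the sizes of the three pieces. As $a_{k-1}=2k-3$ is odd, exactly one of $\frac{w_2}{2}$ and $\frac{w_2+a_{k-1}}{2}$ is an integer, so $|V|=1$; call its element $v$ and note $2v\equiv w_2\pmod m$. The set $H=m\mathbb{Z}\cap[0,a_{k-1})$ has $t$ elements, and since each $u+jm$ with $u\in[0,m-1]$ and $jm\in H$ recovers $u$ and $j$ uniquely, $|U+H|=t\,|U|$, with $U+H\subseteq[0,a_{k-1}-1]$. For $\mathcal{D}^{-}(v)$ I would observe that $x\mapsto v+x(w_2-w_1)\pmod{a_{k-1}}$ runs through each residue $\equiv v\pmod m$ in $[0,a_{k-1})$ exactly once as $x$ ranges over $[0,t-1]$, so restricting to $x\in[0,\frac{t-1}{2}]$ yields $|\mathcal{D}^{-}(v)|=\frac{t+1}{2}$ distinct values. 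The three pieces are pairwise disjoint: $\{a_{k-1}\}$ lies outside $[0,a_{k-1})$ while the other two sit inside it, and $U+H$ occupies residues $u$ with $2u\not\equiv w_2\pmod m$ whereas $\mathcal{D}^{-}(v)$ occupies the single residue $v$ with $2v\equiv w_2\pmod m$. Counting gives $k=|A|=1+t\,|U|+\frac{t+1}{2}$, and using $mt=2k-3$ this rearranges to $|U|=\frac{2k-3-t}{2t}=\frac{mt-t}{2t}=\frac{m-1}{2}$, proving (i). The only delicate points are the exact count for $\mathcal{D}^{-}(v)$ (which hinges on $t$ being odd) and the residue-based disjointness.

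For (ii) I would first note that since $u_1\ne u_2$ lie in $[0,m-1]$ and $u_1+u_2\equiv w_2\pmod m$, an equality $2u_i\equiv w_2\pmod m$ would give $u_1\equiv u_2\pmod m$, hence $u_1=u_2$, impossible; so both $u_1,u_2$ satisfy the defining congruence of $U$, whence $\{u_1,u_2\}\cap U=\{u_1,u_2\}\cap A$. Since $m$ is odd there is a unique $v_0\in[0,m-1]$ with $2v_0\equiv w_2\pmod m$, and the fixed-point-free involution $r\mapsto(w_2-r)\bmod m$ partitions $[0,m-1]\setminus\{v_0\}$ into exactly $\frac{m-1}{2}$ pairs, one of which is $\{u_1,u_2\}$. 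As $U\subseteq[0,m-1]\setminus\{v_0\}$ and $|U|=\frac{m-1}{2}$ by (i), it suffices to show that no such pair contains two elements of $A$; the extremal count then forces each pair, in particular $\{u_1,u_2\}$, to meet $U$ in exactly one element.

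The heart of the matter is therefore to rule out $u_1,u_2\in A$ simultaneously, and I expect this to be the main obstacle. Assuming it, both $u_1,u_2\in U$, so $u_i+H\subseteq U+H\subseteq A$. Writing $w_2-(u_1+u_2)=cm$ with $c\in\mathbb{Z}$, and using $0\leqslant w_2<a_{k-1}=tm$ together with $1\leqslant u_1+u_2\leqslant 2m-3$, one finds $c\in\{-1,0,1,\ldots,t-1\}$. If $c\geqslant 0$, then $u_1+(u_2+cm)=w_2$ with $u_1\in A$, $u_2+cm\in u_2+H\subseteq A$, and $u_1\ne u_2+cm$, so $w_2\in 2^{\wedge}A$; if $c=-1$, then $u_1+\bigl(u_2+(t-1)m\bigr)=w_2+a_{k-1}$ with both summands in $A$ and distinct (here $t\geqslant 2$ enters), so $w_2+a_{k-1}\in 2^{\wedge}A$. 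In either case $S(w_2)\cap 2^{\wedge}A\ne\emptyset$, contradicting $w_2\in W$. The crux is thus to pin down the exact range of $c$ and, in each regime, to exhibit $w_2$ or $w_2+a_{k-1}$ as a sum of two genuinely distinct elements drawn from the coset structure $U+H$.
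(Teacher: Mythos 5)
Your proof is correct and takes essentially the same route as the paper: part (i) is the identical cardinality count of the decomposition $A=\{a_{k-1}\}\cup(U+H)\cup\mathcal{D}^{-}(v)$ using $|H|=\tfrac{a_{k-1}}{m}$ and $|\mathcal{D}^{-}(v)|=\tfrac{a_{k-1}}{2m}+\tfrac12$, and part (ii) is the same pigeonhole argument that each pair $\{u_{1},u_{2}\}$ with $u_{1}+u_{2}\equiv w_{2}\pmod{m}$ meets $U$ at most once because two elements of $U$ would force $S(w_{2})\cap 2^{\wedge}A\neq\emptyset$. Your case split on $c$ is in fact slightly more careful than the paper's, which simply asserts $w_{2}\in 2^{\wedge}A$ and does not record that when $w_{2}<u_{1}+u_{2}$ the sum one actually produces is $w_{2}+a_{k-1}$ rather than $w_{2}$ (still contradicting $w_{2}\in W$).
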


\begin{proof}
Since $a_{k-1}=2k-3$, we know that $m=\gcd(w_{2}-w_{1}, a_{k-1})$ is odd.
By the definition of $V$, we have $|V|=1$.
Put $V=\{v\}$. By Lemma \ref{L3-2}, we have
\begin{equation}\label{e4-1}A=\{a_{k-1}\}\cup(U+H)\cup\mathcal{D}^{-}(v). \end{equation}
Let $\overline{x}$ be the least nonnegative residue of $x$ modulo $m$.

To prove (i). By the definitions of $\mathcal{D}^{-}(v)$ and $H$, we have
$$\left|\mathcal{D}^{-}(v)\right|=\frac{a_{k-1}}{2m}+\frac{1}{2}, \ \ |H|=\frac{a_{k-1}}{m}. $$
Since $|A|=k$, by (\ref{e4-1}) we have
$$|U|=\frac{m-1}{2}. $$

To prove (ii). If $u_{1}, u_{2}\in U$, then by (\ref{e4-1}) we have
$$u_{1}+H, \ \ u_{2}+H\subseteq A, $$
thus $w_{2}\in 2^{\wedge}A$, a contradiction.
Hence, $|\{u_{1}, u_{2}\}\cap U|\leqslant 1$, it implies that $|U|\leqslant \frac{m-1}{2}$.
By (i) we have $$|\{u_{1}, u_{2}\}\cap U|=1. $$

This completes the proof of Lemma \ref{L3-3}.
\end{proof}

\section{Proof of Theorem \ref{T3}}\label{S6}
It is easy to verify that the conclusion is valid for $3\leqslant k<15$. Now, we consider $k\geqslant 15$.
Let $W$, $V$ and $r_{v}(x)$ be as in (\ref{e2-01})-(\ref{e2-03}).

Write
$$T=\{a_{i}: 1\leqslant i\leqslant k-2\}\cup\{a_{i}+a_{k-1}: 0\leqslant i\leqslant k-2\}. $$
Then $T\subseteq 2^{\wedge}A$ and $|T|=2k-3$. By Lemma \ref{L3-1}, we have $|W|\leqslant 2$, thus
$$|S(w)\cap 2^{\wedge}A|\geqslant 1, \ \ w\in [0, a_{k-1}]\backslash(A\cup W). $$
Hence,
$$|2^{\wedge}A|\geqslant |T|+|[0, a_{k-1}]\backslash(A\cup W)|\geqslant 3k-7. $$
Therefore, $|2^{\wedge}A|=3k-7$ if and only if $|W|=2$ and
\begin{equation}\label{e}|S(w)\cap 2^{\wedge}A|=1, \ \ w\in [0, a_{k-1}]\backslash(A\cup W). \end{equation}
Put $V=\{v\}$. By Lemma \ref{L3-2}, we have
\begin{equation}\label{e4-002}A=\{a_{k-1}\}\cup(U+H)\cup\mathcal{D}^{-}(v). \end{equation}

Firstly, we show that if $k\geqslant 15$ and $|2^{\wedge}A|=3k-7$, then $m<5$.

Suppose that $m\geqslant 5$. By Lemma \ref{L3-3} (i), we have $|U|\geqslant 2$.
Let $\alpha\in [0, m-1]\backslash U$ be an integer such that $\alpha\not\equiv v\pmod{m}$ and $\alpha\not\equiv 2v\pmod{m}$.
Then $\alpha\not\in A\cup W$. By (\ref{e4-002}) we have $(\alpha+H)\cap A=\emptyset$, thus for any  $h\in H$, we have
\begin{equation}\label{e4-2}|S(\alpha+h)\cap 2^{\wedge}A|=1. \end{equation}
Hence, one of the following three possibilities holds:
$$|S(\alpha)\cap 2^{\wedge}\mathcal{D}^{-}(v)|=1, \ \ |S(\alpha)\cap ((U+H)+\mathcal{D}^{-}(v))|=1, \ \ |S(\alpha)\cap 2^{\wedge}(U+H)|=1. $$

Since $\alpha\not\equiv 2v\pmod{m}$, we have $S(\alpha)\cap 2^{\wedge}\mathcal{D}^{-}(v)=\emptyset$.
Now, we shall show that
$$S(\alpha)\cap ((U+H)+\mathcal{D}^{-}(v))=\emptyset. $$
If not, then there exists $u\in U$ such that
$$\alpha\equiv u+v\pmod{m}. $$
Since $\alpha\not\in U$, we have $v\not\equiv 0\pmod{m}$, thus $0\not\in\mathcal{D}^{-}(v)$.
By the definition of $r_{v}(x)$, we have
$$q(x)=q(x-1) \text{ or } q(x-1)+1$$
for any $x\in [1, \frac{a_{k-1}}{2m}-\frac{1}{2}]$.
Clearly, $q(x)=q(x-1)$ for some $x\in [1, \frac{a_{k-1}}{2m}-\frac{1}{2}]$ implies that there exist $v_{1}, v_{2}\in \mathcal{D}^{-}(v)$ such that $v_{2}=v_{1}+(w_{2}-w_{1})$,
then for some $h\in H$, we have
$$\alpha+h=u+v_{1}\in 2^{\wedge}A, $$
$$\alpha+h+a_{k-1}=u+v_{1}+a_{k-1}=(u+(a_{k-1}-(w_{2}-w_{1})))+v_{2}\in 2^{\wedge}A. $$
Thus $|S(\alpha+h)\cap 2^{\wedge}A|=2$, which contradicts with (\ref{e4-2}).
Hence, $q(x)=q(x-1)+1$ for any $x\in [1, \frac{a_{k-1}}{2m}-\frac{1}{2}]$, so
\begin{equation}\label{e22}
\mathcal{D}^{-}(v)=\left\{v+x(w_{2}-w_{1})-xa_{k-1}, \ \ 0\leqslant x\leqslant \frac{a_{k-1}}{2m}-\frac{1}{2}\right\}.
\end{equation}
Let $c\geqslant 1$ be the integer such that $a_{k-1}-(w_{2}-w_{1})=cm$. Then
$$u+(v+(w_{2}-w_{1})-a_{k-1})=u+v-cm\in 2^{\wedge}A, $$
$$\left(u+a_{k-1}-cm\right)+v=u+v-cm+a_{k-1}\in 2^{\wedge}A. $$
Thus $|S(u+v-cm)\cap 2^{\wedge}A|=2$, which also contradicts with (\ref{e4-2}).
Hence,
$$S(\alpha)\cap ((U+H)+\mathcal{D}^{-}(v))=\emptyset, $$
so
$$|S(\alpha)\cap 2^{\wedge}(U+H)|=1. $$

Let $u_{1}, u_{2}\in U$ be two integers such that
$$\alpha\equiv u_{1}+u_{2}\pmod{m}. $$

Assume that $u_{1}\neq u_{2}$. Then $\alpha\in 2^{\wedge}A$ and
$$\alpha+a_{k-1}=(u_{1}+m)+(u_{2}+a_{k-1}-m)\in 2^{\wedge}(U+H)\subseteq 2^{\wedge}A, $$
thus $|S(\alpha)\cap 2^{\wedge}A|=2$, which contradicts with (\ref{e4-2}). Hence, $u_{1}=u_{2}$.

Assume that $a_{k-1}>3m$. Then $|H|\geqslant 4$.
Similar to the above discuss, we have
$$|S(\alpha+m)\cap 2^{\wedge}A|=2, $$
which also contradicts with (\ref{e4-2}).
Hence, $a_{k-1}=3m$.

Assume that $0\not\in U$. Since $0\in A$, by (\ref{e4-002}) we have $0\in \mathcal{D}^{-}(v)$,
thus $v\equiv 0\pmod{m}$, it implies that $w_{1}\equiv w_{2}\equiv 0\pmod{m}$.
By the definition of $\mathcal{D}^{-}(v)$, we have $|\mathcal{D}^{-}(v)|=2$, thus $\mathcal{D}^{-}(v)=\{v, 0\}$.
Hence, $a_{k-1}=v+(w_{2}-w_{1})$,
it follows that
$$\{w_{1}, w_{2}\}=\{0, m\}, \ \ \{0, 2m\} \text{ or } \{m, 2m\}, $$
which contradicts with the definition of $W$.
Therefore, $0\in U$ and $v\not\equiv 0\pmod{m}$.

To sum up, if $|2^{\wedge}A|=3k-7$ and $m\geqslant 5$, then the following facts are hold:

{\bf Fact A} $a_{k-1}=3m$;

{\bf Fact B} Let $\alpha\in [0, m-1]\backslash U$ and $\alpha\not\equiv v\pmod{m}$, $\alpha\not\equiv 2v\pmod{m}$.
Then $\alpha\equiv 2u\pmod{m}$ for some $u\in U$, $\alpha\not\equiv v+u\pmod{m}$ for any $u\in U$
and $\alpha\not\equiv u_{1}+u_{2}\pmod{m}$ for any two distinct integers $u_{1}, u_{2}\in U$.

{\bf Fact C} $0\in U$ and $v\not\equiv 0\pmod{m}$.

Write
$$U=\left\{0=u_{0}<u_{1}<\cdots<u_{\frac{m-3}{2}}\right\}. $$
Since we assume that $k\geqslant 15$, we have $a_{k-1}\geqslant 27$, thus $m\geqslant 9$. Hence, $|U|\geqslant 4$.

Let $\overline{x}$ be the least nonnegative residue of $x$ modulo $m$. Then
\begin{equation}\label{e4-1-2}[0, m-1]=\left\{u_{i}, \overline{2u_{i}}: 1\leqslant i\leqslant \frac{m-3}{2}\right\}\cup\left\{0, \overline{v}, \overline{2v}\right\}, \end{equation}
it follows that
\begin{equation}\label{e4-3}\overline{2u_{i}}\not\in U, \ \ i=1, \ldots, \frac{m-3}{2}. \end{equation}
By the Fact B and Lemma \ref{L2-3} (ii), we have
\begin{equation}\label{e4-1-3}\overline{u_{1}+u_{i}}\in U\cup\{\overline{v}\}, \ \ i=2, \ldots, \frac{m-3}{2}, \end{equation}
\begin{equation}\label{e4-1-1}\overline{v+u_{i}}\in U, \ \ i=1, \ldots, \frac{m-3}{2}. \end{equation}

Now, we show that $u_{1}>2$.

Firstly, we assume that $u_{1}=1$. Noting that
$$U=\left\{0=u_{0}<u_{1}<\cdots<u_{\frac{m-3}{2}}\right\}\subseteq [0, m-1], $$
then
$$\overline{1+u_{i}}=1+u_{i}, \ \ 2\leqslant i\leqslant \frac{m-5}{2}$$
and
$$\overline{1+u_{\frac{m-3}{2}}}=\left\{
\begin{array}{ll}
1+u_{\frac{m-3}{2}} & \text{ if } \frac{m-3}{2}<m-1, \\
0 & \text{ if } \frac{m-3}{2}=m-1.
\end{array}\right.
$$
It is clear that
$$u_{2}<1+u_{2}<1+u_{3}<\cdots<1+u_{\frac{m-5}{2}}<1+u_{\frac{m-3}{2}}. $$
If $\overline{1+u_{i}}\in U$ for all $2\leqslant i\leqslant \frac{m-3}{2}$, then
$$\overline{1+u_{i}}=1+u_{i}=u_{i+1}, \ \ i=2, \ldots, \frac{m-5}{2}$$
and $\overline{1+u_{\frac{m-3}{2}}}=0$. Thus $u_{\frac{m-3}{2}}=m-1$, and then $u_{\frac{m-5}{2}}=m-2$.
By (\ref{e4-3}) we have
$$\overline{2u_{\frac{m-3}{2}}}=m-2\not\in U, $$
a contradiction. Hence, there exists an integer $2\leqslant t\leqslant \frac{m-3}{2}$ such that $\overline{1+u_{t}}\not\in U$.
By (\ref{e4-1-3}) we have $\overline{1+u_{t}}=\overline{v}$. We will discuss three separate situations as follows:
\begin{itemize}
  \item If $2\leqslant t\leqslant \frac{m-7}{2}$, then
$$\overline{1+u_{i}}=1+u_{i}=u_{i+1}, \ \ i=t+1, \ldots, \frac{m-5}{2}$$
and $\overline{1+u_{\frac{m-3}{2}}}=0$, thus $u_{\frac{m-3}{2}}=m-1$ and $u_{\frac{m-5}{2}}=m-2$.
By (\ref{e4-3}) we have
$$\overline{2u_{\frac{m-3}{2}}}=m-2\not\in U, $$
a contradiction.
  \item If $\overline{1+u_{\frac{m-5}{2}}}=\overline{v}$, then $\overline{1+u_{\frac{m-3}{2}}}=0$, thus
$$U=\left\{0, 1, u_{2}, u_{2}+1, \ldots, \overline{v-1}, m-1\right\}. $$
By (\ref{e4-1-1}) we have $\overline{v+1}\in U$, thus $\overline{v+1}=m-1$, it follows that $\overline{2v}=m-4$.
Since $\overline{2v}\not\in U$, we have $u_{2}\geqslant m-3$, thus $U=\left\{0, 1, m-3, m-1\right\}$,
it follows that $m=9$ and $U=\left\{0, 1, 6, 8\right\}$. Clearly, $7=1+6\not\in U$, a contradiction.
  \item If $\overline{1+u_{\frac{m-3}{2}}}=\overline{v}$, then
$$U=\left\{0, 1, u_{2}, \ldots, \overline{v-1}\right\}. $$
By (\ref{e4-1-1}) we have $\overline{v+1}\in U$, thus $\overline{v+1}=0$, it follows that $\overline{2v}=m-2$, which contradicts with $\overline{v-1}=m-2\in U$.
\end{itemize}

Assume that $u_{1}=2$. Then $1\not\in U$. The proof is similar to the above, we omit it.

Hence, $1, 2\not\in U$, so $\overline{2u_{i}}\neq 2$ for all $2\leqslant t\leqslant \frac{m-3}{2}$.
By (\ref{e4-1-2}) we have $\overline{v}=1$ or $2$. Based on (\ref{e4-1-1}) and similar discussions of the above, we can obtain a contradiction.

In all, if $k\geqslant 15$ and $|2^{\wedge}A|=3k-7$, then $m<5$. We divide into two cases:

{\bf Case 1. }$m=1$. Then $U=\emptyset$. By (\ref{e4-002}) we have
$$A=\{a_{k-1}\}\cup\mathcal{D}^{-}(v), \ \ 0\in \mathcal{D}^{-}(v). $$
For any non-negative integer $i$, define
$${\bf v}(i, q_{i})=i(w_{2}-w_{1})-q_{i}a_{k-1}, $$
where $q_{i}$ is the unique integer such that ${\bf v}(i, q_{i})\in[0, a_{k-1})$.
Since $\gcd(w_{2}-w_{1}, a_{k-1})=1$, we have
$$[0, a_{k-1}-1]=\{{\bf v}(i, q_{i}): 0\leqslant i\leqslant a_{k-1}-1\}. $$
Noting that there exists an integer $0\leqslant t\leqslant 2k-4$ such that
$$r_{v}(0)=v={\bf v}(t, q_{t}). $$
By the definition of $\mathcal{D}^{-}(v)$, we have
$$r_{v}(1)={\bf v}(t+1, q_{t+1}), \ldots, r_{v}(2k-4-t)={\bf v}(2k-4, q_{2k-4}), $$
$$0=r_{v}(2k-3-t)={\bf v}(0, q_{0}), \ldots, r_{v}(k-2)={\bf v}(t-k+1, q_{t-k+1}), $$
that is,
\begin{equation}\label{e4-7}\mathcal{D}^{-}(v)=\left\{{\bf v}(i, q_{i}): i\in [0, t-k+1]\cup[t, 2k-4]\right\}. \end{equation}
Hence
\begin{equation}\label{e4-6}\{{\bf v}(i, q_{i}): i\in [t-k+2, t-1]\}\cap A=\emptyset. \end{equation}

Since $k\geqslant 15$, we have $|\mathcal{D}^{-}(v)|\geqslant 14$, thus
$$|\{{\bf v}(i, q_{i}): i\in [1, t-k+1]\}|\geqslant 7 \text{ or } |\{{\bf v}(i, q_{i}): i\in [t, 2k-4]\}|\geqslant 7. $$

Without loss of generality, we may assume that
\begin{equation}\label{e4-8}|\{{\bf v}(i, q_{i}): i\in [1, t-k+1]\}|\geqslant |\{{\bf v}(i, q_{i}): i\in [t, 2k-4]\}|. \end{equation}
Then
$$|\{{\bf v}(i, q_{i}): i\in [1, t-k+1]\}|\geqslant 7, $$
thus
$$t-k+1\geqslant 7. $$

Assume that $q_{t-k}=q_{t-k+1}$.
For any $1\leqslant j\leqslant t-k-2$, we have
$$0<{\bf v}(t-k, q_{t-k})+{\bf v}(j+1, q_{j+1})<2a_{k-1}-1, $$
thus
\begin{equation}\label{e4-2-1}{\bf v}(t-k+1+j, q_{t-k+1+j})={\bf v}(t-k, q_{t-k})+{\bf v}(j+1, q_{j+1})\end{equation}
or
\begin{equation}\label{e4-2-2}{\bf v}(t-k+1+j, q_{t-k+1+j})={\bf v}(t-k, q_{t-k})+{\bf v}(j+1, q_{j+1})-a_{k-1}. \end{equation}

Next, we show that $q_{j+1}=q_{j}$ for all $j\in [1, t-k-2]$.

Assume that there exists an integer $j\in [1, t-k-2]$ such that $q_{j+1}=q_{j}+1$.
If (\ref{e4-2-1}) holds, then
$${\bf v}(t-k+1+j, q_{t-k+1+j})={\bf v}(t-k, q_{t-k})+{\bf v}(j+1, q_{j+1})\in 2^{\wedge}A, $$
$${\bf v}(t-k+1+j, q_{t-k+1+j})+a_{k-1}={\bf v}(t-k+1, q_{t-k+1})+{\bf v}(j, q_{j})\in 2^{\wedge}A, $$
it implies that
$$|S({\bf v}(t-k+1+j, q_{t-k+1+j}))\cap 2^{\wedge}A|=2. $$
If (\ref{e4-2-2}) holds, then
$${\bf v}(t-k+1+j, q_{t-k+1+j})={\bf v}(t-k+1, q_{t-k+1})+{\bf v}(j, q_{j})\in 2^{\wedge}A, $$
$${\bf v}(t-k+1+j, q_{t-k+1+j})+a_{k-1}={\bf v}(t-k, q_{t-k})+{\bf v}(j+1, q_{j+1})\in 2^{\wedge}A, $$
it follows that
$$|S({\bf v}(t-k+1+j, q_{t-k+1+j}))\cap 2^{\wedge}A|=2. $$
So ${\bf v}(t-k+1+j, q_{t-k+1+j})\not\in W$. Since
$$t-k+2\leqslant t-k+1+j\leqslant 2t-2k-1<t-1, $$
by (\ref{e4-6}) we have ${\bf v}(t-k+1+j, q_{t-k+1+j})\not\in A$, which contradicts with (\ref{e}).
By (\ref{e4-7}) we have
$$q_{1}=\cdots =q_{t-k-1}=0. $$

Now, we prove that $q_{t-k}=q_{t-k+1}=0$. If not, $q_{t-k}=q_{t-k+1}=1$, then
$$\left\{
\begin{array}{lll}
{\bf v}(t-k-4, 0), &{\bf v}(t-k-3, 0), &{\bf v}(t-k-2, 0),\\
{\bf v}(t-k-1, 0), &{\bf v}(t-k, 1), &{\bf v}(t-k+1, 1)
\end{array}\right\}\subseteq \mathcal{D}^{-}(v), $$
thus
$$|S({\bf v}(2t-2k-3, 1))\cap 2^{\wedge}A|=2. $$
So $~{\bf v}(2t-2k-3, 1)\not\in W$. Since $t-k+1\geqslant 7$, we have $2t-2k-3>t-k+1$.
By (\ref{e4-6}) we have ${\bf v}(2t-2k-3, 1)\not\in A$, which contradicts with (\ref{e}).
Thus $$q_{1}=\cdots =q_{t-k+1}=0. $$
Hence
\begin{equation}\label{e4-01}\{{\bf v}(i, q_{i}): i\in [1, t-k+1]\}=\{i(w_{2}-w_{1}): i\in [1, t-k+1]\}. \end{equation}
It follows that
\begin{equation}\label{e4-9}a_{k-1}>(t-k+1)(w_{2}-w_{1}). \end{equation}
It is easy to see that
$${\bf v}(2k-4, q_{2k-4})=a_{k-1}-(w_{2}-w_{1}), $$
by (\ref{e4-8}) and (\ref{e4-9}) we have
\begin{equation}\label{e4-02}\left\{{\bf v}(i, q_{i}): i\in [t, 2k-4]\right\}=\left\{a_{k-1}-i(w_{2}-w_{1}): i\in [1, 2k-3-t]\right\}. \end{equation}

Noting that ${\bf v}(t, q_{t})=r_{v}(0)$, ${\bf v}(t-k+1, q_{t-k+1})=r_{v}(k-2)$, by (\ref{eq3.2-1})-(\ref{eq3.2-4}) and (\ref{e4-01}), (\ref{e4-02}) we have
$${\bf v}(t, q_{t})=a_{k-1}-(2k-3-t)(w_{2}-w_{1})=\frac{w_{2}}{2} \text{ or }\frac{w_{2}+a_{k-1}}{2}, $$
$${\bf v}(t-k+1, q_{t-k+1})=(t-k+1)(w_{2}-w_{1})=\frac{w_{1}}{2} \text{ or }\frac{w_{1}+a_{k-1}}{2}. $$
Subtracting the two equations gives $w_{2}-w_{1}=1, 2$ or $3$.
\begin{itemize}
  \item $w_{2}-w_{1}=1$. By (\ref{e4-01}) and (\ref{e4-02}), there exists an integer $k\leqslant \theta\leqslant 2k-4$ such that
$$A=[0, \theta-k+1]\cup [\theta, 2k-3]. $$
  \item $w_{2}-w_{1}=2$. By (\ref{e4-01}) and (\ref{e4-02}), we have
$$A=\{2i: i\in [0, t-k+1]\}\cup \{2k-3-2i: i\in [0, 2k-3-t]\}. $$
It follows that there exists an integer $1\leqslant \theta\leqslant k-3$ such that
$$A=\{2i, 2j-1: i\in [0, \theta], j\in[\theta+1, k-1]\}. $$
  \item $w_{2}-w_{1}=3$. Then $3\nmid 2k-3$. By (\ref{e4-01}) and (\ref{e4-02}), we have
$$A=\{3i: i\in [0, t-k+1]\}\cup \{2k-3-3i: i\in [0, 2k-3-t]\}. $$
It follows that there exists an integer $\frac{k-3}{3}<\theta<\frac{2k-3}{3}$ such that
$$A=\{3i, 3j-k: i\in [0, \theta], j\in[\theta+1, k-1]\}. $$
\end{itemize}

Similarly, if ${\bf v}(t-k, q_{t-k})$ and ${\bf v}(t-k+1, q_{t-k+1})$ satisfy $q_{t-k+1}=q_{t-k}+1$, then
$$\{{\bf v}(i, q_{i}): i\in [1, t-k+1]\}=\{i(w_{2}-w_{1})-(i-1)a_{k-1}: i\in [1, t-k+1]\}$$
and
$$\{{\bf v}(i, q_{i}): i\in [t, 2k-4]\}=\{ia_{k-1}-i(w_{2}-w_{1}): i\in[1, 2k-3-t]\}. $$
We can verify that $w_{2}-w_{1}=2k-5$, $2k-6$ or $2k-7$, there exists a new structure of $A$:
$$A=\left\{0, 4i, 4i-3, 2k-3: 1\leqslant i\leqslant \frac{k-2}{2}\right\}, \ \ 2\mid k. $$

{\bf Case 2. }$m=3$. Then $k\equiv 0\pmod{3}$ and $|U|=1$. Put $U=\{u\}$. By (\ref{e4-002}) we have
$$A=\{a_{k-1}\}\cup (u+H)\cup\mathcal{D}^{-}(v). $$
If $v\equiv 0\pmod{3}$, then $u=1$ or $2$. Without loss of generality, we consider $u=1$.
By (\ref{e4-002}) we have
$$1+3t\in A, \ \ 2+3t\not\in A, \ \ 0\leqslant t\leqslant \frac{a_{k-1}}{3}-1. $$
If $\frac{a_{k-1}}{3}-1\geqslant 3$, then
$$5=1+4\in 2^{\wedge}A, \ \ 5+a_{k-1}=7+(1+a_{k-1}-3)\in 2^{\wedge}A. $$
Thus $|S(5)\cap 2^{\wedge}A|=2$, which contradicts with (\ref{e4-2}).
If $\frac{a_{k-1}}{3}-1<3$, then $a_{k-1}=2k-3<12$, thus $k\leqslant 7$, which contradicts with $k\geqslant 15$.
Hence $v\not\equiv 0\pmod{3}$, and then $u=0$. Hence
$$A=\left\{3t: 0\leqslant t\leqslant \frac{2k-3}{3}\right\}\cup\mathcal{D}^{-}(v). $$

Since $k\geqslant 15$, we have $|\mathcal{D}^{-}(v)|\geqslant 5$. Write
$$\mathcal{D}^{-}(v)=\left\{r_{v}(x): x=0, \ldots, \frac{a_{k-1}}{6}-\frac{1}{2}\right\}. $$
If $\mathcal{D}^{-}(v)$ is not a monotone arithmetic progression,
then there exists an integer $j\in [0, \frac{a_{k-1}}{6}-\frac{7}{2}]$ such that one of the following cases holds:

(1) $r_{v}(j)<r_{v}(j+1), \ \ r_{v}(j+1)<r_{v}(j+2), \ \ r_{v}(j+2)>r_{v}(j+3)$;

(2) $r_{v}(j)<r_{v}(j+1), \ \ r_{v}(j+1)>r_{v}(j+2), \ \ r_{v}(j+2)>r_{v}(j+3)$;

(3) $r_{v}(j)>r_{v}(j+1), \ \ r_{v}(j+1)<r_{v}(j+2), \ \ r_{v}(j+2)<r_{v}(j+3)$;

(4) $r_{v}(j)>r_{v}(j+1), \ \ r_{v}(j+1)>r_{v}(j+2), \ \ r_{v}(j+2)<r_{v}(j+3)$;

(5) $r_{v}(j)<r_{v}(j+1), \ \ r_{v}(j+1)>r_{v}(j+2), \ \ r_{v}(j+2)<r_{v}(j+3)$;

(6) $r_{v}(j)>r_{v}(j+1), \ \ r_{v}(j+1)<r_{v}(j+2), \ \ r_{v}(j+2)>r_{v}(j+3)$.

The cases (1)-(4) are similar, we only consider (1). By the definition of $\mathcal{D}^{-}(v)$, we have
$$r_{v}(j+1)=r_{v}(j)+(w_{2}-w_{1}), $$
$$r_{v}(j+2)=r_{v}(j)+2(w_{2}-w_{1}), $$
$$r_{v}(j+3)=r_{v}(j)+3(w_{2}-w_{1})-a_{k-1}, $$
thus
$$r_{v}(j+1)+r_{v}(j+2)\in 2^{\wedge}A, $$
$$r_{v}(j+1)+r_{v}(j+2)-a_{k-1}=r_{v}(j)+r_{v}(j+3)\in 2^{\wedge}A. $$
Thus
$$|S(r_{v}(j+1)+r_{v}(j+2)-a_{k-1})\cap 2^{\wedge}A|=2. $$
Since $r_{v}(j+1)+r_{v}(j+2)-a_{k-1}\equiv 2v\pmod{3}$, we have $r_{v}(j+1)+r_{v}(j+2)-a_{k-1}\not\in A$, which contradicts with (\ref{e}).

The cases (5)-(6) are similar, we only consider (5). By the definition of $\mathcal{D}^{-}(v)$, we have
$$r_{v}(j+1)=r_{v}(j)+(w_{2}-w_{1}), $$
$$r_{v}(j+2)=r_{v}(j)+2(w_{2}-w_{1})-a_{k-1}, $$
$$r_{v}(j+3)=r_{v}(j)+3(w_{2}-w_{1})-a_{k-1}. $$
Since $|\mathcal{D}^{-}(v)|\geqslant 5$, we have
$$r_{v}(j+4)=r_{v}(j)+4(w_{2}-w_{1})-2a_{k-1}. $$
If not, then $r_{v}(j+4)=r_{v}(j)+4(w_{2}-w_{1})-a_{k-1}$, thus
$$r_{v}(j+1)>r_{v}(j+2), \ \ r_{v}(j+2)<r_{v}(j+3), \ \ r_{v}(j+3)<r_{v}(j+4), $$
which is similar to (3), a contradiction. Hence
$$r_{v}(j)+r_{v}(j+4)\in 2^{\wedge}A, $$
$$r_{v}(j)+r_{v}(j+4)+a_{k-1}=r_{v}(j+1)+r_{v}(j+3)\in 2^{\wedge}A. $$
Thus
$$|S(r_{v}(j)+r_{v}(j+4))\cap 2^{\wedge}A|=2. $$
Similar to (1), we have $r_{v}(j)+r_{v}(j+4)\not\in A$, which also contradicts with (\ref{e}).

Therefore, $\mathcal{D}^{-}(v)$ is a monotone arithmetic progression. To be specific, by the definition of $\mathcal{D}^{-}(v)$, we have
\begin{equation}\label{e4-4}\mathcal{D}^{-}(v)=\left\{r_{v}(i)=v+i(w_{2}-w_{1}): 0\leqslant i\leqslant \frac{a_{k-1}}{6}-\frac{1}{2}\right\}\end{equation}
or
\begin{equation}\label{e4-5}\mathcal{D}^{-}(v)=\left\{r_{v}(i)=v+i(w_{2}-w_{1})-ia_{k-1}: 0\leqslant i\leqslant \frac{a_{k-1}}{6}-\frac{1}{2}\right\}. \end{equation}

For case (\ref{e4-4}), $\mathcal{D}^{-}(v)$ is a monotone increasing arithmetic progression.
By the definition of $\mathcal{D}^{-}(v)$, we have $v=\frac{w_{2}}{2}$ and
$$\frac{w_{2}}{2}+\left(\frac{a_{k-1}}{6}-\frac{1}{2}\right)(w_{2}-w_{1})=\frac{w_{1}+a_{k-1}}{2}, $$
thus $w_{2}-w_{1}=3$. Therefore
$$A=\left\{3i: 0\leqslant i\leqslant \frac{2k-3}{3}\right\}\cup\left\{\theta+3i: 0\leqslant i\leqslant \frac{k-3}{3}\right\}, $$
where $2\leqslant \theta\leqslant k-2$ and $3\nmid\theta$.

For case (\ref{e4-5}), $\mathcal{D}^{-}(v)$ is a monotone decreasing arithmetic progression.
Similar to case (\ref{e4-4}), we have
$$A=\left\{3i: 0\leqslant i\leqslant \frac{2k-3}{3}\right\}\cup\left\{\theta+3i: 0\leqslant i\leqslant \frac{k-3}{3}\right\}, \ \ \theta=1, k-1.  $$

This completes the proof of Theorem \ref{T3}.

\section{Proof of Theorem \ref{T1}}\label{S7}
Let $k=3$. Then $A=\{0, 1, a_{2}\}$, thus
$$|2^{\wedge}A|=|\{1, a_{2}, a_{2}+1\}|=3>2, $$
so the result holds. Now we assume that $k\geqslant 4$.
By Theorem \ref{T2}, it is sufficiency to show that the Freiman-Lev conjecture is true for $a_{k-1}\geqslant 2k-2$, $a_{k-2}<2k-4$ and $a_{i}\geqslant 2i$ for some $i\in [1, k-3]$.

Choose $s\in [2, k-2]$ such that $a_{j}<2j$ for $j=s, \ldots, k-2$, and $a_{s-1}\geqslant 2(s-1)$. Then
$$2s-2\leqslant a_{s-1}<a_{s}<2s, $$
and so $a_{s-1}=2s-2$ and $a_{s}=2s-1$.

{\bf Case 1. }$s=k-2$. Let $A'=A\backslash\{a_{k-1}\}$. Then
$$k':=|A'|=k-1=s+1, $$
thus
$$a_{k'-2}=2k'-4, \ \ a_{k'-1}=2k'-3. $$
By Theorem C, we have
$$|2^{\wedge}A'|\geqslant 3k'-7=3k-10. $$
If $|2^{\wedge}A'|>3k-10$, then
$$|2^{\wedge}A|\geqslant |2^{\wedge}A'\cup \{a_{k-1}+a_{k-2}, a_{k-1}+a_{k-3}\}|\geqslant 3k-7. $$
Assume that $|2^{\wedge}A'|=3k-10$. Then
$$|2^{\wedge}A|\geqslant |2^{\wedge}A'\cup \{a_{k-1}+a_{k-2}, a_{k-1}+a_{k-3}\}|\geqslant 3k-8. $$
If
$$a_{k-4}=a_{k'-3}\geqslant 2k'-6=2k-8, $$
then
$$a_{k-1}+a_{k-4}\geqslant 4k-10. $$
Since the largest integer of $2^{\wedge}A'$ is
$$a_{k'-2}+a_{k'-1}=2k'-4+2k'-3=4k-11, $$
we have
$$a_{k-1}+a_{k-4}\not\in 2^{\wedge}A', \ \ a_{k-1}+a_{k-4}\in 2^{\wedge}A. $$
Hence $|2^{\wedge}A|\geqslant 3k-7$. If
$$a_{k-4}=a_{k'-3}<2k'-6=2k-8, $$
combined with $a_{k'-2}=2k'-4$ and $a_{k'-1}=2k'-3$, then the structure of set $A'$ can only be as described in Remark \ref{r1}.
In these situations, with some appropriate calculations, it is easy to verify that $|2^{\wedge}A|\geqslant 3k-7$ when $a_{k-1}\geqslant 2k-2$.

{\bf Case 2. }$s<k-2$. Since $a_{s+1}<2s+2$, we have $a_{s+1}=2s$ or $2s+1$.
Define the sets $A_{1}$ and $A_{2}$ by
$$A_{1}=\{a_{0}, a_{1}, \ldots, a_{s-1}, a_{s}, a_{s+1}\}, $$
$$A_{2}=\{a_{s-1}, a_{s}, a_{s+1}, \ldots, a_{k-2}, a_{k-1}\}. $$
Since $a_{s}-a_{s-1}=1$, we have $\gcd(A_{1})=\gcd(A_{2})=1$.

Let $k_{1}:=|A_{1}|=s+2$. Then $4\leqslant k_{1}\leqslant k-1$ and
$$a_{k_{1}-1}=a_{s+1}=2k_{1}-4 \text{ or } 2k_{1}-3. $$
By Theorem B, we have
\begin{equation}\label{e2-1}|2^{\wedge}A_{1}|\geqslant 3k_{1}-7=3s-1. \end{equation}

Define the set $A^{*}_{2}$ by
$$A^{*}_{2}=A_{2}-a_{s-1}=\{0, 1, \ldots, a_{k-1}-a_{s-1}\}. $$
Let $k_{2}=|A^{*}_{2}|=|A_{2}|=k-s+1$. Then $4\leqslant k_{2}\leqslant k-1$.
Since $a_{s-1}=2s-2$, we have
$$a_{k-1}-a_{s-1}\geqslant 2k-2-(2s-2)=2(k-s)=2k_{2}-2. $$
For any $j=1, \ldots, k_{2}-2$, we have
$$a_{s+j-1}-a_{s-1}<2(s+j-1)-(2s-2)=2j. $$
By Theorem \ref{T2}, we have
\begin{equation}\label{e2-2}|2^{\wedge}A_{2}|=|2^{\wedge}A^{*}_{2}|\geqslant 3k_{2}-7=3k-3s-4. \end{equation}

Since
$$2^{\wedge}A_{1}\cup2^{\wedge}A_{2}\subseteq 2^{\wedge}A$$
and
$$2^{\wedge}A_{1}\cap2^{\wedge}A_{2}=\{a_{s-1}+a_{s}, a_{s-1}+a_{s+1}, a_{s}+a_{s+1}\}, $$
by (\ref{e2-1}) and (\ref{e2-2}) we have
$$|2^{\wedge}A|\geqslant 3k-8. $$

Obviously, if $|2^{\wedge}A_{1}|>3s-1$ or $|2^{\wedge}A_{2}|=|2^{\wedge}A^{*}_{2}|>3k-3s-4$, then $|2^{\wedge}A|\geqslant 3k-7$. Now, we assume that
$$|2^{\wedge}A_{1}|=3s-1, \ \ |2^{\wedge}A_{2}|=|2^{\wedge}A^{*}_{2}|=3k-3s-4. $$
By Theorem \ref{T2}, we have $k_{2}\geqslant 6$ and
$$A_{2}^{*}=\left\{3i, 3i+1, k_{2}+1+3i: i=0, \ldots, \frac{k_{2}-3}{3}\right\}, \ \ k_{2}\equiv 0\pmod{3}$$
or
$$A_{2}^{*}=\left\{3i, 3i+1, k_{2}-1+3i, 2k_{2}-2: i=0, \ldots, \frac{k_{2}-4}{3}\right\}, \ \ k_{2}\equiv 1\pmod{3}, $$
thus $0, 1, 3, 4\in A_{2}^{*}$, it implies that
$$a_{k_{1}-3}=a_{s-1}=2s-2=2k_{1}-6, $$
$$a_{k_{1}-2}=a_{s}=2s-1=2k_{1}-5, $$
$$a_{k_{1}-1}=a_{s+1}=2s+1=2k_{1}-3$$
and $a_{s+2}=2s+2$. By Remark \ref{r2}, we have
$$a_{s-2}=a_{k_{1}-4}=2k_{1}-8=2s-4, $$
thus
$$2a_{s-1}+2=4s-2=a_{s-2}+a_{s+2}\in 2^{\wedge}A. $$
By Remark \ref{r2.1}, we have $2\not\in 2^{\wedge}A^{*}_{2}$, thus $2a_{s-1}+2\not\in 2^{\wedge}A_{2}$. Clearly, $2a_{s-1}+2\not\in 2^{\wedge}A_{1}$.
Hence
$$|2^{\wedge}A|\geqslant 3k-7. $$

This completes the proof of Theorem \ref{T1}.

\section{Appendix: Proofs of Lemmas \ref{P1}-\ref{P3} and Proposition \ref{P4}}\label{A}
\begin{proof}[Proof of Lemma \ref{P1}]
Let $c_{1}, c_{2}\in [2k-3, 2k-4+b_{m-1}]$ such that $c_{2}=2k-4+c$, $c_{1}=2k-5+c$, where $c\in [2, b_{m-1}]$.
By Lemma \ref{L2-2} (iii), we know that for any $b\in B\backslash\{b_{m}\}$, we have
\begin{equation}\label{e2.6}\left|[b+1, 2k-5+c-b]\cap A'\right|=k-2+\left\lfloor\frac{c}{2}\right\rfloor-b, \end{equation}
\begin{equation}\label{e2.7}|\{i, 2k-4+c-i\}\cap A'|=1, \ \ i=b+1, \ldots, k-2+\left\lfloor \frac{c}{2}\right\rfloor, \end{equation}
\begin{equation}\label{e2.8}|[b+1, 2k-6+c-b]\cap A'|=k-2+\left\lfloor\frac{c-1}{2}\right\rfloor-b, \end{equation}
\begin{equation}\label{e2.9}|\{i, 2k-5+c-i\}\cap A'|=1, \ \ i=b+1, \ldots, k-2+\left\lfloor \frac{c-1}{2}\right\rfloor. \end{equation}

If $c$ is even, then by (\ref{e2.6}) and (\ref{e2.8}), choose $b=b_{m-1}$, we have
$$2k-5+c-b_{m-1}\in A'. $$
By (\ref{e2.7}) and $b_{m-1}+1\in A'$, we have
$$2k-5+c-b_{m-1}=b_{m-1}+1, $$
thus
$$b_{m-1}=k-3+\frac{c}{2}\geqslant k-2. $$
By Lemma \ref{L2-3}, we have $b_{m-1}<\frac{b_{m}}{2}\leqslant k-2$, a contradiction.

Let $2\nmid c$. Then $c\geqslant 3$. If $c\in (1, b_{m-2}]$, then by (\ref{e2.7}) and (\ref{e2.9}), choose $b=b_{m-2}$, we have
$$b_{m-2}+1\in A' \overset{(\ref{e2.9})}\Longleftrightarrow 2k-6+c-b_{m-2}\not\in A' \overset{(\ref{e2.7})}\Longleftrightarrow b_{m-2}+2\in A'. $$
Repeating the above process, we have
$$b_{m-2}+1\in A'\Longleftrightarrow b_{m-2}+2\in A'\Longleftrightarrow \cdots \Longleftrightarrow k-2+\frac{c-1}{2}\in A'. $$
That is,
$$\left[b_{m-2}+1, k-2+\frac{c-1}{2}\right]\subseteq A', $$
thus
$$\left[2b_{m-2}+3, 2k-6+c\right]\subseteq 2^{\wedge}A'. $$
By Lemma \ref{L2-3} and $b_{m}\leqslant 2k-4$, we have
$$2b_{m-2}+2\leqslant b_{m-1}<b_{m}<2k-6+c, $$
thus $b_{m}\in 2^{\wedge}A'$, a contradiction.

If $c\in (b_{m-2}, b_{m-1}]$ is an odd integer, then similar to the above discuss, we have
\begin{equation}\label{e2.10}\left[b_{m-1}+1, k-2+\frac{c-1}{2}\right]\subseteq A', \end{equation}
thus
\begin{equation}\label{e2.11}\left[2b_{m-1}+3, 2k-6+c\right]\subseteq 2^{\wedge}A'. \end{equation}
Noting that $b_{m}\not\in A'$, $b_{m}\leqslant 2k-4<2k-6+c$ and $b_{m}\geqslant 2b_{m-1}+2$, by (\ref{e2.11}) we have
$$b_{m}=2b_{m-1}+2. $$

By (\ref{e2.7}) and (\ref{e2.10}), we have
\begin{equation}\label{e2.12}\left[k-1+\frac{c-1}{2}, 2k-5+c-b_{m-1}\right]\cap A'=\emptyset. \end{equation}
By (\ref{e2.10}), (\ref{e2.12}) and Lemma \ref{L2-2} (ii), we have
\begin{eqnarray*}
& &\left|\left[b_{m}-(2k-5+c-b_{m-1}), b_{m-1}\right]\cap A'\right|\\
&=&
\left|\left[b_{m}-(2k-5+c-b_{m-1}), b_{m}-\left(k-1+\frac{c-1}{2}\right)\right]\right|\\
&=&
k-3-b_{m-1}+\frac{c+1}{2}.
\end{eqnarray*}
If $b_{m}=2k-4$, then
$$b_{m}>2k-5+c-b_{m-1}. $$
If $b_{m}<2k-4$, then by Lemma \ref{L2-2} (i) and (iv), we have $b_{m}\not\in A'$ and $b_{m}+1\in A'$.
By (\ref{e2.10}) and (\ref{e2.12}), we have
$$b_{m}\geqslant 2k-5+c-b_{m-1}. $$

If $b_{m}>2k-5+c-b_{m-1}$, then again by Lemma \ref{L2-2} (ii), we have
\begin{eqnarray*}
&&\left|\left[1, b_{m}-(2k-4+c-b_{m-1})\right]\cap A'\right|\\
&=&\frac{b_{m-1}}{2}-\left(k-3-b_{m-1}+\frac{c+1}{2}\right)\\
&=&
\frac{1}{2}\left(b_{m}-(2k-3+c-b_{m-1})\right).
\end{eqnarray*}
Hence,
$$a_{\frac{1}{2}\left(b_{m}-(2k-3+c-b_{m-1})\right)+1}=b_{m}-(2k-5+c-b_{m-1}), $$
which contradicts with $a_{i}<2i$ for all $i=1, \ldots, k-2$.

If $b_{m}=2k-5+c-b_{m-1}$, then
$$[0, b_{m}]\cap A'=\left[0,\frac{b_{m-1}}{2}\right]\cup \left[b_{m-1}+1, \frac{3b_{m-1}}{2}+1\right], $$
$$b_{m}+1=2b_{m-1}+3\in A', $$
it implies that $m=2$. Moreover,
$$[1, b_{1}-1]\cup [b_{1}+1, 2b_{1}+1]\cup [2b_{1}+3, 3b_{1}+1]\subseteq 2^{\wedge}A'. $$

Noting that
$$2k-5+c=3b_{1}+2, \ \ 2k-4+c=3b_{1}+3. $$
Since $c\geqslant 3$, we have $3b_{1}+1\geqslant 2k-3$ and $3b_{1}+1\in 2^{\wedge}A'$, it follows that
$$[1, 2k-4]\backslash \{b_{1}, b_{2}\}\subseteq 2^{\wedge}A', \ \ 2k-3\in 2^{\wedge}A', \ \ a_{k-1}+A'\subseteq 2^{\wedge}A, $$
thus $|2^{\wedge}A|\geqslant 3k-6$.

This completes the proof of Lemma \ref{P1}.
\end{proof}

\begin{proof}[Proof of Lemma \ref{P2}]
Let $c_{1}, c_{2}\in [2k-3, 2k-4+b_{m-1}]$ such that $c_{2}=2k-4+c$, $c_{1}=2k-6+c$, where $c\in [3, b_{m-1}]$.

If $c$ is even, then $c\geqslant 4$. By Lemma \ref{L2-2} (iii) we have
$$k-3+\frac{c}{2}\in A', \ \ k-2+\frac{c}{2}\in A'. $$
Similar to Lemma ~\ref{P1}, we have
$$\left\{d\in \left[b_{m-1}+1, k-3+\frac{c}{2}\right]: 2\nmid d\right\}\subseteq A'. $$
If $k-4+\frac{c}{2}\in A'$, by $~k-2+\frac{c}{2}\in A'$ we have $2k-6+c\in 2^{\wedge}A'$, a contradiction.
Hence, $k-4+\frac{c}{2}\not\in A'$. Again by \ref{L2-2} (iii) we have
$$k-4+\frac{c}{2}\not\in A' \overset{c_{2}\not\in 2^{\wedge}A'}\Longleftrightarrow k+\frac{c}{2}\in A'\Longleftrightarrow \cdots \Longleftrightarrow b_{m-1}+2\not\in A', $$
that is
\begin{eqnarray}\label{e2.15}\left\{d\in \left[b_{m-1}+1, k-3+\frac{c}{2}\right]: 2\mid d\right\}\cap A'=\emptyset. \end{eqnarray}
By Lemma \ref{L2-2} (i) and \ref{L2-3} we have $\frac{b_{m}}{2}\in A'$, $\frac{b_{m}}{2}\geqslant b_{m-1}+1$, thus $2\nmid \frac{b_{m}}{2}$.
By $b_{m}\leqslant 2k-4$ and $c\geqslant 4$ we have
\begin{eqnarray}\label{e2.16}b_{m-1}+1\leqslant\frac{b_{m}}{2}\leqslant k-2<k-3+\frac{c}{2}. \end{eqnarray}
If $\frac{b_{m}}{2}=b_{m-1}+1$, then $b_{m}=2b_{m-1}+2$, thus
$$b_{m}-b_{m-1}=b_{m-1}+2\in A', $$
which contradicts with (\ref{e2.15}). Hence, $\frac{b_{m}}{2}>b_{m-1}+1$.
By (\ref{e2.16}) and $2\nmid \frac{b_{m}}{2}$, $2\nmid b_{m-1}+1$, $2\nmid k-3+\frac{c}{2}$ we have
$$\frac{b_{m}}{2}-2, \ \ \frac{b_{m}}{2}+2\in A', $$
thus $~b_{m}\in 2^{\wedge}A'$, a contradiction.

If $c$ is odd. Similar to the discussion of Lemma \ref{P1}, we have
$$\left\{d\in \left[b_{m-1}+1, k-2+\frac{c-1}{2}\right]: 2\nmid d\right\}\subseteq A', $$
$$\left\{d\in \left[b_{m-1}+1, k-2+\frac{c-1}{2}\right]: 2\mid d\right\}\cap A'=\emptyset. $$
The remainder proof is similar to Case 1, except for $c=3$ and $b_{m}=2k-4$,
which implies that $k-2\in A'$, thus $2\nmid k-2$, and then $k-3$, $k-1\not\in A'$, which contradicts with Lemma \ref{L2-2} (ii).

This completes the proof of Lemma \ref{P2}.
\end{proof}

\begin{proof}[Proof of Lemma \ref{P3}]
Let $c_{1}, c_{2}\in [2k-3, 2k-4+b_{m-1}]$ such that $c_{2}=2k-4+c$, $c_{1}=2k-7+c$, where $c\in [4, b_{m-1}]$.
Choose $b=b_{m-1}$, by Lemma \ref{L2-2} (iii), we have
\begin{equation}\label{e2.17}|[b_{m-1}+1, 2k-5+c-b_{m-1}]\cap A'|=k-2+\left\lfloor\frac{c}{2}\right\rfloor-b_{m-1}, \end{equation}
\begin{equation}\label{e2.17-1}|[b_{m-1}+1, 2k-8+c-b_{m-1}]\cap A'|=k-2+\left\lfloor\frac{c-3}{2}\right\rfloor-b_{m-1}. \end{equation}
Moreover, by $b_{m-1}+1\in A'$, we know that
$$2k-5+c-b_{m-1}\not\in A'. $$

Now, we shall show that $c$ is odd. If $c$ is even, then by (\ref{e2.17})-(\ref{e2.17-1}) we have
$$2k-7+c-b_{m-1}, \ \ 2k-6+c-b_{m-1}\in A'. $$
Thus $b_{m-1}+2\not\in A'$. Since $b_{m-1}<k-2$ and $c\geqslant 4$, we have
$$b_{m-1}+2<2k-6+c-b_{m-1}. $$
According to the parity of $b_{m-1}+2$ and $2k-7+c-b_{m-1}$ we have
$$b_{m-1}+2\neq 2k-7+c-b_{m-1}. $$

If $b_{m-1}+3<2k-7+c-b_{m-1}$, then again by Lemma \ref{L2-2} (iii), we have $b_{m-1}+3\not\in A'$.
By Lemma \ref{L2-2} (iv), $a_{\frac{b_{m-1}}{2}+1}=b_{m-1}+1$, thus
$$a_{\frac{b_{m-1}}{2}+2}\geqslant b_{m-1}+4, $$
a contradiction.

If $b_{m-1}+3=2k-7+c-b_{m-1}$, then $c=4$ and $b_{m-1}=k-3\not\in A'$, thus $b_{m}=2k-4$, it implies that $k-2\in A'$ and $k-1\in A'$, which contradicts with $2k-7+c=2k-3\not\in 2^{\wedge}A'$.

Hence, $c$ is odd and
$$|\{2k-7+c-b_{m-1}, 2k-6+c-b_{m-1}\}\cap A'|=1. $$
The cases (i)-(vi) are similar, for convenience, we only proof (i).

Assume that
$$2k-7+c-b_{m-1}\in A', \ \ 2k-6+c-b_{m-1}\not\in A'. $$
By Lemma \ref{L2-2} (iii), we have $b_{m-1}+3\not\in A'$. Since $a_{\frac{b_{m-1}}{2}+2}<b_{m-1}+4$,
we have
$$a_{\frac{b_{m-1}}{2}+2}=b_{m-1}+2\in A'. $$
Thus
$$b_{m-1}+1\in A', \ \ b_{m-1}+2\in A', \ \ b_{m-1}+3\not\in A'. $$
Similar to the discussion of Proposition \ref{P1}, we have
\begin{equation}\label{e2.18}\left\{b_{m-1}+1+3t: t\in \mathbb{N}\right\}\cap \left[b_{m-1}+1, k-2+\frac{c-1}{2}\right]\subseteq A', \end{equation}
\begin{equation}\label{e2.19}\left\{b_{m-1}+2+3t: t\in \mathbb{N}\right\}\cap \left[b_{m-1}+1, k-2+\frac{c-1}{2}\right]\subseteq A', \end{equation}
\begin{equation}\label{e2.20}\left(\left\{b_{m-1}+3t: t\in \mathbb{N}\right\}\cap \left[b_{m-1}+1, k-2+\frac{c-1}{2}\right]\right)\cap A'=\emptyset. \end{equation}

Since $c$ is odd and $2k-7+c\not\in 2^{\wedge}A'$, by Lemma \ref{L2-2} (iii), we have $k-3+\frac{c-1}{2}\in A'$.
By (\ref{e2.18}) and (\ref{e2.19}) we have
$$k-3+\frac{c-1}{2}\equiv b_{m-1}+1 \text{ or } b_{m-1}+2\pmod{3}. $$

If $k-3+\frac{c-1}{2}\equiv b_{m-1}+1\pmod{3}$, then $k-2+\frac{c-1}{2}\equiv b_{m-1}+2\pmod{3}$.
By (\ref{e2.19}) we have
$$k-2+\frac{c-1}{2}\in A'. $$
Since $k-4+\frac{c-1}{2}\equiv b_{m-1}\pmod{3}$, by (\ref{e2.20}) we have $k-4+\frac{c-1}{2}\not\in A'$, it implies that
$$k+1+\frac{c-1}{2}\in A'. $$
Repeating the above process, we have
\begin{equation}\label{e2.21}\left\{b_{m-1}+2+3t: t\in \mathbb{N}\right\}\cap \left[k+\frac{c-1}{2}, 2k-7+c-b_{m-1}\right]\subseteq A'. \end{equation}
By (\ref{e2.19}) and (\ref{e2.21}), we have
$$\left\{b_{m-1}+2+3t: t\in \mathbb{N}\right\}\cap \left[b_{m-1}+1, 2k-7+c-b_{m-1}\right]\subseteq A'. $$
By Lemma \ref{L2-2} (ii) and (\ref{e2.17}), we have
\begin{eqnarray*}
&&\left|[1, 2k-5+c-b_{m-1}]\cap A'\right|\\
&=&\left|[1, b_{m-1}]\cap A'\right|+\left|[b_{m-1}+1, 2k-5+c-b_{m-1}]\cap A'\right|\\
&=&k-2+\frac{c-1}{2}-\frac{b_{m-1}}{2} \end{eqnarray*}
and
$$a_{k-2+\frac{c-1}{2}-\frac{b_{m-1}}{2}+1}<2\left(k-2+\frac{c-1}{2}-\frac{b_{m-1}}{2}+1\right)=2k-3+c-b_{m-1}, $$
thus
$$a_{k-2+\frac{c-1}{2}-\frac{b_{m-1}}{2}+1}=2k-4+c-b_{m-1}\in A'. $$
Hence
\begin{eqnarray}\label{e2.22} \nonumber
&&\left[b_{m-1}+1, 2k-4+c-b_{m-1}\right]\cap A'\\
&=&\left(\left\{b_{m-1}+1+3t: t\in \mathbb{N}\right\}\cap \left[b_{m-1}+1, k-2+\frac{c-1}{2}\right]\right)\\ \nonumber
&&\cup\left(\left\{b_{m-1}+2+3t: t\in \mathbb{N}\right\}\cap \left[b_{m-1}+1, 2k-4+c-b_{m-1}\right]\right).  \nonumber
\end{eqnarray}

Similarly, if $k-3+\frac{c-1}{2}\equiv b_{m-1}+2\pmod{3}$, then
\begin{eqnarray}\label{e2.23} \nonumber
&&\left[b_{m-1}+1, 2k-4+c-b_{m-1}\right]\cap A'\\
&=&\left(\left\{b_{m-1}+2+3t: t\in \mathbb{N}\right\}\cap \left[b_{m-1}+1, k-2+\frac{c-1}{2}\right]\right)\\ \nonumber
&&\cup\left(\left\{b_{m-1}+1+3t: t\in \mathbb{N}\right\}\cap \left[b_{m-1}+1, 2k-4+c-b_{m-1}\right]\right).  \nonumber
\end{eqnarray}
Since $b_{m}\geqslant 2b_{m-1}+2$, we divide into the following cases:

{\bf Case 1. }$\frac{b_{m}}{2}\geqslant b_{m-1}+3$. Since $c\in[4, b_{m-1}]$ is odd and $b_{m}\leqslant 2k-4$, we have
$$\frac{b_{m}}{2}<k-3+\frac{c-1}{2}. $$

If $\frac{b_{m}}{2}\equiv k-3+\frac{c-1}{2}\pmod{3}$, then $\frac{b_{m}}{2}+3\leqslant k-3+\frac{c-1}{2}$.
By (\ref{e2.18}) and (\ref{e2.19}), we have $\frac{b_{m}}{2}-3\in A'$ and $\frac{b_{m}}{2}+3\in A'$,
thus $b_{m}\in 2^{\wedge}A'$, a contradiction.

Assume that $\frac{b_{m}}{2}\not\equiv k-3+\frac{c-1}{2}\pmod{3}$.
If $k-3+\frac{c-1}{2}\equiv b_{m-1}+1\pmod{3}$, then $\frac{b_{m}}{2}\equiv b_{m-1}+2\pmod{3}$.
By (\ref{e2.22}) we have $\frac{b_{m}}{2}-3\in A'$ and $\frac{b_{m}}{2}+3\in A'$, thus $b_{m}\in 2^{\wedge}A'$, a contradiction.
If $k-3+\frac{c-1}{2}\equiv b_{m-1}+2\pmod{3}$, then the proof is similar. Here, we omitted it.

{\bf Case 2. }$\frac{b_{m}}{2}=b_{m-1}+1$. Since $b_{m-1}+3\not\in A'$, by Lemma \ref{L2-2}, we have
$$b_{m-1}-1=b_{m}-(b_{m-1}+3)\in A'. $$
Since $a_{1}<2$, we have $a_{1}=1$, thus $b_{m-1}=1+(b_{m-1}-1)\in 2^{\wedge}A'$, a contradiction.

{\bf Case 3. }$\frac{b_{m}}{2}=b_{m-1}+2$.

For convenience, we assume that $k-3+\frac{c-1}{2}\equiv b_{m-1}+2\pmod{3}$. Then there exists a positive integer $t$ such that
$$k-3+\frac{c-1}{2}=b_{m-1}+2+3t, $$
thus
$$2k-4+c-b_{m-1}=b_{m-1}+1+6(t+1). $$
By (\ref{e2.19}) and (\ref{e2.23}), we have
$$A'_{1}:=\left\{b_{m-1}+2, b_{m-1}+5, \ldots, k-3+\frac{c-1}{2}\right\}\subseteq A', |A_{1}'|=t+1, $$
$$A'_{2}:=\left\{b_{m-1}+1, b_{m-1}+4, \ldots, 2k-4+c-b_{m-1}\right\}\subseteq A', |A_{2}'|=2(t+1)+1. $$

If $b_{m}\geqslant 2k-3+c-b_{m-1}$, then by $b_{m}=2b_{m-1}+4$ and
$$2k-5+c-b_{m-1}=b_{m-1}+6(t+1)\not\in A', $$
we have
$$b_{m-1}+4-6(t+1)=b_{m}-(b_{m-1}+6(t+1))\in A'. $$
By Lemma \ref{L2-2} (ii), we have
\begin{eqnarray*}
|[b_{m-1}+4-6(t+1), b_{m-1}]\cap A'|=3(t+1)-1,
\end{eqnarray*}
thus
$$|[1, b_{m-1}+3-6(t+1)]|=\frac{b_{m-1}}{2}-3(t+1)+1. $$
Hence
$$a_{\frac{b_{m-1}}{2}-3(t+1)+2}=b_{m-1}+4-6(t+1), $$
which contradicts with $a_{i}<2i$ for all $i=1, \ldots, k-2$.

If $b_{m}\leqslant 2k-4+c-b_{m-1}$, by (\ref{e2.19}), (\ref{e2.23}) and $b_{m}\not\in A'$, we have
$$b_{m}\equiv b_{m-1} \text{ or } b_{m-1}+2\pmod{3}. $$
If $b_{m}\equiv b_{m-1}+2\pmod{3}$, then $b_{m}+1\equiv b_{m-1}+3\pmod{3}$, thus $b_{m-1}+1\not\in A'$, a contradiction.
Hence, $b_{m}\equiv b_{m-1}\pmod{3}$. Then there exists a positive integer $s$ such that
$$b_{m}=b_{m-1}+3s, \ \ 0<s\leqslant 2t+2. $$
If $0<s\leqslant t+1$, then
\begin{eqnarray*}
&&|[b_{m-1}+1, b_{m}]\cap A'|\\
&=&|[b_{m-1}+1, b_{m}]\cap A'_{1}|+|[b_{m-1}+1, b_{m}]\cap A'_{2}|\\
&=&2s>\frac{b_{m}-b_{m-1}}{2},
\end{eqnarray*}
thus
$$|[1, b_{m}]\cap A'|=|[1, b_{m-1}]\cap A'|+|[b_{m-1}+1, b_{m}]\cap A'|>\frac{b_{m}}{2}, $$
which contradicts with Lemma \ref{L2-2} (ii).

If $t+1<s<2(t+1)$, then
\begin{eqnarray*}
&&|[b_{m-1}+1, b_{m}]\cap A'|\\
&=&|[b_{m-1}+1, b_{m}]\cap A'_{1}|+|[b_{m-1}+1, b_{m}]\cap A'_{2}|\\
&=&t+s+1>\frac{b_{m}-b_{m-1}}{2},
\end{eqnarray*}
thus
$$|[1, b_{m}]\cap A'|=|[1, b_{m-1}]\cap A'|+|[b_{m-1}+1, b_{m}]\cap A'|>\frac{b_{m}}{2}, $$
which also contradicts with Lemma \ref{L2-2} (ii).

Hence, $s=2(t+1)$. That is,
$$b_{m}=2k-5+c-b_{m-1}=b_{m-1}+6(t+1)<2k-4. $$
It is easy to see that $b_{m-1}\geqslant c\geqslant 5$.

By $b_{m}=2b_{m-1}+4$ and Lemma \ref{L2-2} (ii), we have $b_{m-1}\equiv 2\pmod{3}$ and
\begin{eqnarray*}
[0, b_{m}]\cap A'
&=&\left\{3i: 0\leqslant i\leqslant \frac{2b_{m-1}+2}{3}\right\}\\
&&\cup \left\{3i+1: 0\leqslant i\leqslant \frac{b_{m-1}-2}{6}, \frac{b_{m-1}+1}{3}\leqslant i\leqslant \frac{b_{m-1}}{2}\right\},
\end{eqnarray*}
thus $m=3$ and $b_{1}=2$.

Next, we show that $|2^{\wedge}A|\geqslant 3k-6$.

It is easy to see that
\begin{equation}\label{e2.24}\left\{3i: 1\leqslant i\leqslant \frac{4b_{2}+1}{3}\right\}\subseteq 2^{\wedge}A', \end{equation}
\begin{equation}\label{e2.25}\left\{3i+1: 0\leqslant i\leqslant \frac{7b_{2}+4}{6}\right\}\subseteq 2^{\wedge}A', \end{equation}
\begin{equation}\label{e2.26}\left\{3i+2: 1\leqslant i\leqslant b_{2}-1, i\neq \frac{b_{2}-2}{3}, \frac{2b_{2}+2}{3}\right\}\subseteq 2^{\wedge}A'. \end{equation}

Noting that
$$2k-7+c=3b_{2}+2, \ \ 2k-4+c=3b_{2}+5$$
and
\begin{equation}\label{e2.27}[1, 2k-4]\backslash \{2, b_{2}, b_{3}\}\subseteq 2^{\wedge}A', \ \ a_{k-1}+A'\subseteq 2^{\wedge}A. \end{equation}

If $c\geqslant 9$, then by (\ref{e2.24})-(\ref{e2.26}), we have
$$2k-3, 2k-2, 2k-1, 2k, 2k+1\in 2^{\wedge}A'. $$
thus there exist two different elements of $2^{\wedge}A$ from (\ref{e2.27}),
namely that $\{2k-3, 2k-2\}\subseteq 2^{\wedge}A'$ for $a_{k-1}>2k-2$ and $\{2k-3, 2k\}\subseteq 2^{\wedge}A'$ for $a_{k-1}=2k-2$, respectively.
Hence, $|2^{\wedge}A|\geqslant 3k-6$.

If $c=7$, then $3b_{2}+2=2k$ and
$$2k-3, 2k-2, 2k-1\in 2^{\wedge}A'. $$
Similar to the above, we have $|2^{\wedge}A|\geqslant 3k-6$ for $a_{k-1}>2k-2$.
Assume that $a_{k-1}=2k-2$. Then $a_{k-1}=3b_{2}$. Put
$$c':=3\left(\frac{7b_{2}+4}{6}+1\right)+1=3b_{2}+\frac{b_{2}}{2}+6, $$
$$c'':=3\left(\frac{7b_{2}+4}{6}\right)+2=3b_{2}+\frac{b_{2}}{2}+4. $$

If $b_{2}\geqslant 16$, then $\frac{b_{2}}{2}+6\not\in A'$ and $\frac{b_{2}}{2}+4\not\in A'$.
If $c'\in 2^{\wedge}A'$ or $c''\in 2^{\wedge}A'$, then
$$[1, 2k-4]\backslash \{2, b_{2}, b_{3}\}\subseteq 2^{\wedge}A', \ \ 2k-3\in 2^{\wedge}A', $$
$$\{c', c''\}\cap 2^{\wedge}A'\neq \emptyset, \ \ a_{k-1}+A'\subseteq 2^{\wedge}A, $$
thus $|2^{\wedge}A|\geqslant 3k-6$.
Assume that $c', c''\not\in 2^{\wedge}A'$. Since $3b_{2}+2=2k$, we have
$$3b_{2}+\frac{b_{2}}{2}+6=2k-4+\frac{b_{2}}{2}+8\leqslant 2k-4+b_{2}, $$
thus $c', c''\in (2k-4, 2k-4+b_{2}]$, but $c'-c''=2$, which contradicts with Lemma \ref{P2}.
Hence, $b_{2}<16$. Since $b_{m-1}\equiv 2\pmod{3}$ and $b_{2}$ is even, we have $b_{2}=8$ or $14$.
It is easy to verify that these special cases are true.

If $c=5$, then the proof is similar to $c=7$. Here, we omit it.

This completes the proof of Lemma \ref{P3}.
\end{proof}

\begin{proof}[Proof of Proposition \ref{P4}]
(Sufficiency). It is obvious.

(Necessity). By Lemma \ref{L2-2} (ii), we have
$$|[0, b_{m-1}]\cap A'|=\frac{b_{m-1}}{2}+1, \ \ a_{\frac{b_{m-1}}{2}+1}=b_{m-1}+1\in A', $$
thus
\begin{equation}\label{e2.28}\left|[b_{m-1}+1, 2k-4]\cap A'\right|=k-2-\frac{b_{m-1}}{2}. \end{equation}
Since $2k-3+b_{m-1}\not\in 2^{\wedge}A'$, we have
$$|\{i, 2k-3+b_{m-1}-i\}\cap A'|\leqslant 1, \ \ i=b_{m-1}+1, \ldots, k-2+\frac{b_{m-1}}{2}, $$
thus
\begin{equation}\label{e2.29}|[b_{m-1}+1, 2k-4]\cap A'|\leqslant k-2-\frac{b_{m-1}}{2}. \end{equation}
By (\ref{e2.28}) and (\ref{e2.29}) we have
\begin{eqnarray}\label{e2.30}|\{i, 2k-3+b_{m-1}-i\}\cap A'|=1, \ \ i=b_{m-1}+1, \ldots, k-2+\frac{b_{m-1}}{2}. \end{eqnarray}
Since $b_{m-1}+1\in A'$, we have
$$|[b_{m-1}+2, 2k-4]\cap A'|=k-3-\frac{b_{m-1}}{2}. $$
Since $2k-2+b_{m-1}\not\in 2^{\wedge}A'$, we have
$$|\{i, 2k-2+b_{m-1}-i\}\cap A'|\leqslant 1, \ \ i=b_{m-1}+2, \ldots, k-1+\frac{b_{m-1}}{2}, $$
thus
$$|[b_{m-1}+2, 2k-4]\cap A'|\leqslant k-2-\frac{b_{m-1}}{2}. $$
Hence, there exists a unique integer $t\in\{2, \ldots, k-1-\frac{b_{m-1}}{2}\}$ such that
$$b_{m-1}+t\not\in A', \ \ 2k-2-t\not\in A'. $$
Moreover, for any $i\in \left[b_{m-1}+2, k-1+\frac{b_{m-1}}{2}\right]\backslash\{b_{m-1}+t\}$, we have
\begin{eqnarray}\label{e2.31}\{i, 2k-2+b_{m-1}-i\}\cap A'=1. \end{eqnarray}
By (\ref{e2.30}) and (\ref{e2.31}), we have
\begin{eqnarray*}
b_{m-1}+1\in A' &\overset{(\ref{e2.30})}\Longleftrightarrow& 2k-4\not\in A' \overset{(\ref{e2.31})}\Longleftrightarrow b_{m-1}+2\in A'\\
&\Longleftrightarrow& \cdots \Longleftrightarrow b_{m-1}+t-1\in A',
\end{eqnarray*}
\begin{eqnarray*}
k-1+\frac{b_{m-1}}{2}\in A' &\overset{(\ref{e2.30})}\Longleftrightarrow& k-2+\frac{b_{m-1}}{2}\not\in A' \overset{(\ref{e2.31})}\Longleftrightarrow k+\frac{b_{m-1}}{2}\in A'\\
&\Longleftrightarrow& \cdots \Longleftrightarrow 2k-3-t\in A'.
\end{eqnarray*}

If $t=2$, then $k-1+\frac{b_{m-1}}{2}\in A'$, thus
\begin{eqnarray}\label{e2.32}[b_{m-1}+1, 2k-4]\cap A'=\{b_{m-1}+1\}\cup \left[k-1+\frac{b_{m-1}}{2}, 2k-5\right]. \end{eqnarray}
By Lemma \ref{L2-2} and Lemma \ref{L2-3}, we have $\frac{b_{m}}{2}\in A'$ and $b_{m-1}+1\leqslant \frac{b_{m}}{2}\leqslant k-2$, thus
$$\frac{b_{m}}{2}=b_{m-1}+1. $$
Since $a_{i}<2i$ for all $i=1, \ldots, k-2$, we have $1\in A'$.
Noting that $2k-5\in A'$, we have $2k-4\in 2^{\wedge}A'$, thus $b_{m}<2k-4$. By (\ref{e2.32}) we have
$$b_{m-1}+2\leqslant b_{m}\leqslant k-2+\frac{b_{m-1}}{2}. $$
By Lemma \ref{L2-2} (ii), we have
$$|[0, b_{m}]\cap A'|=\frac{b_{m}}{2}+1=b_{m-1}+2, $$
thus
$$|[0, b_{m-1}+1]\cap A'|=|[0, b_{m}]\cap A'|=b_{m-1}+2. $$
Hence, $[0, b_{m-1}+1]\subseteq A'$, which contradicts with $b_{m-1}\not\in A'$.

If $t=k-1-\frac{b_{m-1}}{2}$, then
$$[b_{m-1}+1, 2k-4]\cap A'=\left[b_{m-1}+1, k-2+\frac{b_{m-1}}{2}\right], $$
thus
$$\left[2b_{m-1}+3, 2k-5+b_{m-1}\right]\subseteq 2^{\wedge}A', $$
which implies that $b_{m}=2b_{m-1}+2$. If $b_{m}<2k-4$, then $b_{m}+1\in A'$, a contradiction. Thus $b_{m}=2k-4$, and then $b_{m-1}=k-3$.
By Lemma \ref{L2-2} (ii), we have
$$A'=\left[0, \frac{k-3}{2}\right]\cup \left[k-2, \frac{3(k-3)}{2}+1\right], $$
thus $B=\{k-3, 2k-4\}$.

Now, we assume that $3\leqslant t\leqslant k-2-\frac{b_{m-1}}{2}$. Then
\begin{equation}\label{e2.33}[b_{m-1}+1, 2k-4]\cap A'=[b_{m-1}+1, b_{m-1}+t-1]\cup \left[k-1+\frac{b_{m-1}}{2}, 2k-3-t\right], \end{equation}
thus
$$b_{m-1}+t\leqslant b_{m}\leqslant k-2+\frac{b_{m-1}}{2} \text{ or } 2k-2-t\leqslant b_{m}\leqslant 2k-4. $$

{\bf Case 1. }$2k-2-t\leqslant b_{m}\leqslant 2k-4$. Then $b_{m}=2k-4$. If not, $b_{m}+1\in A'$, a contradiction.
By (\ref{e2.33}) and $\frac{b_{m}}{2}=k-2\in A'$, we have
$$b_{m-1}+1\leqslant k-2\leqslant b_{m-1}+t-1, $$
which implies that $k-2=b_{m-1}+1$ or $b_{m-1}+t-1$.
If $k-2=b_{m-1}+1$, then by Lemma \ref{L2-2} (i) and (\ref{e2.33}), we have
$$\frac{b_{m-1}}{2}+\left(k-1+\frac{b_{m-1}}{2}\right)=k-1+b_{m-1}=2k-4\in 2^{\wedge}A', $$
a contradiction.

Let $k-2=b_{m-1}+t-1$. Again by (\ref{e2.33}) and Lemma \ref{L2-2}, we have
$$A'=\left[0, \frac{k-3}{3}\right]\cup\left[\frac{2k-3}{3}, k-2\right]\cup\left[\frac{4k-6}{3}, \frac{5k-12}{3}\right], $$
thus $B=\left\{\frac{2k-6}{3}, 2k-4\right\}$.

{\bf Case 2. }$b_{m-1}+t\leqslant b_{m}\leqslant k-2+\frac{b_{m-1}}{2}$. Since $b_{m}\not\in A'$ and $b_{m}+1\in A'$, we have
$b_{m}=k-2+\frac{b_{m-1}}{2}$. By (\ref{e2.33}) we have $\frac{b_{m}}{2}=b_{m-1}+1$ or $\frac{b_{m}}{2}=b_{m-1}+t-1$.
Similar to Case 1, we have
$$A'=\left[0, \frac{k-4}{3}\right]\cup\left[\frac{2k-5}{3}, k-3\right]\cup\left[\frac{4k-7}{3}, \frac{5k-11}{3}\right], $$
thus $B=\left\{\frac{2k-8}{3}, \frac{4k-10}{3}\right\}$.

This completes the proof of Proposition \ref{P4}.
\end{proof}


\section*{References}

\end{document}